\setlist[enumerate]{leftmargin=*}
\newcommand{\e}{\mathrm{e}}
\renewcommand{\approx}{ \asymp}
\def\bX{{\mathbf X}}
\newcommand{\Z}{\mathbb{Z}}
\def\cU{{\mathcal U}}
\def\sC{{\mathscr C}}
\DeclareMathOperator{\supp}{supp}
\DeclareMathOperator{\oD}{D}
\DeclareMathOperator{\oG}{G}
\DeclareMathOperator{\oS}{S}
\DeclareFontFamily{U}{mathx}{\hyphenchar\font45}
\DeclareFontShape{U}{mathx}{m}{n}{
      <5> <6> <7> <8> <9> <10>
      <10.95> <12> <14.4> <17.28> <20.74> <24.88>
      mathx10
      }{}
\DeclareSymbolFont{mathx}{U}{mathx}{m}{n}
\DeclareMathAccent{\widecheck}{0}{mathx}{"71}
\DeclareMathAccent{\wideparen}{0}{mathx}{"75}
\newcommand{\leqnomode}{\tagsleft@true}
\newcommand{\reqnomode}{\tagsleft@false}
\numberwithin{equation}{section}
\newcommand{\dd}{d}
\newcommand{\R}{\mathbb{R}}
\newcommand{\N}{\mathbb{N}}
\newcommand{\Ls}{\mathcal{L}}
\newcommand{\As}{\mathcal{A}}
\theoremstyle{theorem}
\newtheorem{theorem}{\sc \textbf{Theorem}}[section]  
\newtheorem{proposition}[theorem]{\sc \textbf{Proposition}}   
\newtheorem{corollary}[theorem]{\sc \textbf{Corollary}}        
\newtheorem{lemma}[theorem]{\sc \textbf{Lemma}}
\renewcommand{\approx}{ \asymp}
\theoremstyle{remark}
\newtheorem{definition}[theorem]{\sc \textbf{Definition}}
\newtheorem{remark}[theorem]{\sc \textbf{Remark}}
\DeclareFontFamily{T1}{calligra}{}
\DeclareFontShape{T1}{calligra}{m}{n}{<->s*[1.44]callig15}{}
\DeclareMathAlphabet\mathcalligra   {T1}{calligra} {m} {n}
\DeclareMathAlphabet\mathzapf       {T1}{pzc} {mb} {it}
\DeclareMathAlphabet\mathchorus     {T1}{qzc} {m} {n}
\DeclareMathAlphabet\mathrsfso      {U}{rsfso}{m}{n}
\newcommand{\unif}{{\rm unif}}
\begin{document}

\title[Multipliers for Triebel--Lizorkin and Besov spaces]{Pointwise multipliers for \\ Triebel--Lizorkin and Besov spaces on Lie groups} 

\author[T.\ Bruno]{Tommaso Bruno}
\address{Dipartimento di Matematica, Universit\`a degli Studi di Genova\\ Via Dodecaneso 35, 16146 Genova, Italy}
\email{brunot@dima.unige.it}

\author[M.\ M.\ Peloso]{Marco M.\ Peloso}
\address{Dipartimento di Matematica, 
Universit\`a degli Studi di Milano, 
Via C.\ Saldini 50,  
20133 Milano, Italy - Dipartimento di Eccellenza 2023-2027}
\email{marco.peloso@unimi.it}

\author[M.\ Vallarino]{Maria Vallarino}
\address{Dipartimento di Scienze Matematiche ``Giuseppe Luigi Lagrange'',
  Politecnico di Torino, Corso Duca degli Abruzzi 24, 10129 Torino,
  Italy }
\email{maria.vallarino@polito.it}

\keywords{Lie groups, pointwise multipliers, Besov spaces, Triebel--Lizorkin spaces}
\thanks{{\em{Math Subject Classification}} 46E35, 22E30, 43A15} 
\thanks{The first and third authors are partially supported by the  2022 INdAM--GNAMPA grant {\em Generalized Laplacians on continuous and discrete structures} (CUP\_E55F22000270001). The second author is partially supported by the 2022 INdAM--GNAMPA grant {\em Holomorphic Functions in One and Several Complex Variables} (CUP\_E55F22000270001). All authors are members of the Gruppo Nazionale per l'Analisi Matematica, la Probabilit\`a e le loro Applicazioni (GNAMPA) of the Istituto Nazionale di Alta Matematica (INdAM)} 

\begin{abstract}
On a general Lie group $G$ endowed with a sub-Riemannian structure and of local dimension $d$, we characterize the pointwise multipliers of Triebel--Lizorkin spaces $F^{p,q}_{\alpha}$ for $p,q\in (1,\infty)$ and $\alpha>d/p$, and those of Besov spaces $B^{p,q}_{\alpha}$ for $q\in [1,\infty]$, $p>d$ and $d/p< \alpha<1$. When $G$ is stratified, we extend the latter characterization to all $p,q\in [1,\infty]$ and  $\alpha>d/p$.
\end{abstract}
\maketitle

\section{Introduction}
The problem of describing explicitly the pointwise multipliers of function spaces is one of the basic questions when studying their role, in particular, in the theory of partial differential equations. In the Euclidean setting, the case of Sobolev spaces was first consider by Strichartz~\cite{Str-67}; his result was then extended to the case of Triebel--Lizorkin spaces by a number of authors, see e.g.~\cite[2.8]{TriebelTFS} and the references therein. The case of Besov spaces turned out to be more difficult and was object of several attempts, see e.g.~\cite{Peetre,MS,Sickel,SS}, until it was very recently solved by Nguyen and Sickel~\cite{NS}. To the best of our knowledge, however, no result is available in higher generality than $\R^{d}$. In this paper we consider such problem in the case of Besov and Triebel--Lizorkin spaces defined in the sub-elliptic setting of a general Lie group.

Beyond the classical potential spaces on $\R^d$, in recent years the theory of function spaces on manifolds, in particular when these are endowed with a sub-Riemannian structure, has been at the center of intense research efforts. The standard prototype for such a situation is the case of connected Lie groups, when the classical Laplacian is replaced by the intrinsic  sub-Laplacian with respect to a H\"ormander system $\mathbf X$ of left-invariant vector fields.  The ground work for Sobolev, Triebel--Lizorkin and Besov spaces on general Lie groups was laid in~\cite{BPTV,BPV-MA,BPV-GAHA}, see also~\cite{CRTN,Fe, GS}, where equivalent descriptions and norms, embeddings, interpolation and algebra properties, among other things, were obtained.

\smallskip

The aim of this paper is then to characterize the pointwise multipliers for such Triebel--Lizorkin spaces $F^{p,q}_\alpha$  and Besov spaces $B^{p,q}_\alpha$, which we denote by $MF^{p,q}_\alpha$  and  $MB^{p,q}_\alpha$ respectively, on a noncompact connected Lie group $G$. We obtain a complete characterization of $MF^{p,q}_\alpha$  in the range $1<p,q<\infty$, and $\alpha>d/p$, where $d$ is the so-called ``local dimension'' of $G$, which depends only on $G$ and $\bf X$. The case of Besov spaces  turns out to be more challenging and, to a certain extent, this should not come as a surprise in view of the Euclidean case already. For $MB^{p,q}_\alpha$ we obtain a complete characterization in the case when $G$ is a {\em stratified} Lie group. In the case of a general Lie group, we characterize the multiplier space $MB^{p,q}_\alpha$ only for certain ranges of the smoothness parameter $\alpha$ and for $p>d$. 

The reason of this restriction is merely technical, and is due to our use (inspired by~\cite{NS}) of an equivalent Besov norm expressed in terms of a finite difference. Interestingly, it seems not clear what a satisfactory definition of a finite difference of arbitrary high order should be on a general Lie group. We are able to say this on a stratified group, and we discuss the general case at the very end of the paper.

\smallskip

The structure of the paper is as follows. In Section~\ref{2nd-sec} we introduce some preliminaries about the sub-elliptic setting of a Lie group; in Sections~\ref{Sec:TL} and~\ref{Sec:Besov} we characterize $MF^{p,q}_{\alpha}$ and $MB^{p,q}_{\alpha}$ for the above mentioned indices when $G$ is a general Lie group $G$, while in the final Section~\ref{Sec:stratified} we extend the characterization of $MB^{p,q}_{\alpha}$ to all regularities when $G$ is stratified.

\section{Setting and preliminaries}\label{2nd-sec}
Let $G$ be a noncompact connected Lie group with identity $e$, let $\lambda$ be a left Haar measure on $G$ and $\delta$ be the modular function.  We pick a family $\mathbf{X}= \{ X_1,\dots, X_\kappa\}$ of left-invariant linearly independent vector fields which satisfy H\"ormander's condition, and denote by $d_C$ the associated left-invariant Carnot--Carath\'edory distance. We shall sometimes write $|x|=d_C(x,e)$, and denote by $B_r$ the ball centered at $e$ of radius $r$. We recall that the metric measure space $(G, d_C,\lambda)$ is locally doubling, as there exists $d\in \N$ (depending on $G$ and $\bf{X}$) such that  
\[
C^{-1} r^d \leq \lambda(B_{r}) \leq C r^d\quad \forall r\in (0,1],
\]
where $C>0$ is a constant independent of $r$; but that in general it is not doubling, as the growth of $\lambda(B_{r})$ can be exponential for large $r$'s. For this fact and all what follows, we refer the reader to~\cite{BPV-GAHA, BPV-MA, BPV-NA, BPTV} and the references therein.

If $p\in [1,\infty]$, we shall denote by $L^{p}$ the classical Lebesgue spaces with respect to $\lambda$, and their norms will be denoted by $\| \cdot \|_{p}$. The convolution between two functions $f$ and $g$, when it exists, is defined by 
\[
f*g(x) =\int_G f(xy)g(y^{-1})\, \dd \lambda(y),\qquad  x\in G.
\]
We denote by $\Ls$ the operator
\begin{equation*}
\Ls = - \sum_{j=1}^\kappa (X_j^2 +(X_j\delta)(e) X_j),
\end{equation*}
which is symmetric on $L^2$, is essentially self-adjoint on
$C_c^\infty(G)$, and is the intrinsic sub-Laplacian associated with
$\bX$; see~\cite{Agrachev-et-al, HMM}. We shall denote by $\Ls$ its unique self-adjoint extension too.

The operator $\Ls$ is the infinitesimal generator of the diffusion (heat) semigroup $(\e^{-t\Ls})_{t>0}$, which has a smooth convolution kernel which we denote by $p_{t}$, $t>0$. It is well known, cf.\ e.g.~\cite[Lemma 3.1]{BPV-MA}, that there exist constants $C,c_{1},c_{2}>0$ such that
\begin{equation}\label{heatabovebelow}
C^{-1}t^{-\frac{d}{2}} \e^{-c_{1} \frac{|x|^{2}}{t}}  \leq  p_{t}(x)\leq C\,  t^{-\frac{d}{2}} \e^{-c_{2} \frac{|x|^{2}}{t}}, \qquad \forall t\in(0,1),\,x\in G, 
\end{equation}
and that for all $h\in \N$ there exist positive constants $C=C(h)$ and $b=b_h$ such that
\begin{equation}\label{heatestimate}
|X_J p_t (x)| \leq C t^{-\frac h2} p_{bt}(x) \qquad \forall t\in(0,1),\,x\in G,\, J\in \{1, \dots, \kappa \}^h.
\end{equation}
Here and all throughout, for $J = (J_{1}, \dots, J_{h})\in \{1, \dots, \kappa \}^h$ the notation $X_J $ stands for the differential operator $X_{J_{1}} \cdots X_{J_{h}}$.

\subsection{Triebel--Lizorkin and Besov spaces for $\Ls$} Suppose $\alpha>0$ and $q\in [1,\infty]$. For $p\in [1,\infty)$, the Triebel--Lizorkin space $F_\alpha^{p,q}$ is the space of functions $f\in L^{p}$ such that, when $m$ is the smallest integer larger than $\alpha/2$,
\begin{equation}\label{Fnorm}
\|f\|_{F_\alpha^{p,q}} = \|f\|_{p} + \bigg\| \bigg( \int_0^1 ( t^{-\alpha/2} |(t\Ls)^{m} \e^{-t\Ls}  f| )^q \, \frac{\dd t}{t}\bigg)^{1/q} \bigg\|_{p}
\end{equation}
is finite, with the usual modification when $q=\infty$. For $ p\in [1,\infty]$, the Besov space $B_\alpha^{p,q}$ is the space of functions $f\in L^{p}$ such that, when $m$ is as above,
\begin{equation}\label{Bnorm}
\|f\|_{B_\alpha^{p,q}} = \|f\|_{p} + \bigg( \int_0^1 (t^{-\alpha/2}\,  \| (t\Ls)^{m} \e^{-t\Ls}    f\|_{p})^q \, \frac{\dd t}{t}\bigg)^{1/q}
\end{equation}
is finite, again with the usual modification when $q=\infty$. By~\cite[Theorem 4.1]{BPV-MA}, for the above $p$ and $q$'s any other choice of an integer $m> \alpha/2$ in~\eqref{Fnorm} and~\eqref{Bnorm} gives (respectively) equivalent norms. In case no distinction between $F^{p,q}_{\alpha}$ and $B^{p,q}_{\alpha}$ is needed, we shall write $X^{p,q}_{\alpha}$ to denote either of the two. We recall that, by~\cite[Theorem 5.2]{BPV-MA}, if $p\in (1,\infty)$ and $\alpha>0$, then the space $F^{p,2}_{\alpha}$ coincides with the Sobolev space $L^{p}_{\alpha}$ (cf.~\cite{BPTV}), namely the space of functions $f\in L^{p}$ whose norm
\[
\| f\|_{L^{p}_{\alpha}} = \|f\|_{p} + \|\Ls^{\alpha/2}f\|_{p}
\]
is finite. If $\alpha=k\in \N$, moreover, by~\cite[Proposition 3.3]{BPTV}
\begin{equation}\label{Sobinteger}
\| f\|_{L^{p}_{k}}  \approx \sum_{0\leq |J|\leq k}\|X_{J}f\|_{p}.
\end{equation}
For later convenience, we define when $p=\infty$
\begin{equation}\label{Sobinfty}
\| f\|_{L^{\infty}_{k}}  = \sum_{0\leq |J|\leq k}\|X_{J}f\|_{\infty}.
\end{equation}

Here and in what follows, $A \approx B$ for two positive quantities $A$ and $B$ means that there exists $C>0$ (depending on $G$ and other circumstantial parameters) such that $C^{-1} B \leq A \leq  C\, B$. Analogously, we shall write $A\lesssim B$ if there exists such a $C$ such that $A\leq C \,B$.

We finally recall that, given $p,q\in [1,\infty]$ and $\alpha>d/p$, the spaces $B^{p,1}_{d/p}$ and $B^{p,q}_{\alpha}$ are algebras under pointwise multiplication; and that the same holds for the spaces $F^{p,q}_{\alpha}$, provided $p\in (1,\infty)$. See~\cite[Theorem 7.1]{BPV-MA}. In particular, if $f,g\in X^{p,q}_{\alpha}$ and the indices are as above, then
\begin{equation}\label{algebra}
\| f g\|_{X^{p,q}_\alpha}\lesssim  \|f\|_{X^{p,q}_\alpha} \|g\|_{X^{p,q}_\alpha}.
\end{equation}
\subsection{First order finite differences and equivalent norms} We introduce now first-order finite differences on $G$, and recall their role in providing equivalent norms for the spaces $X^{p,q}_{\alpha}$ . Higher order differences will be discussed in due course, see in particular Section~\ref{Sec:stratified} and Remark~\ref{remarkHOD}. 

For $y\in G$, we define the first-order difference $\oD_{y}$ of a function $f$ as
\begin{equation}\label{oD}
 \oD_y f(x) =f(xy^{-1})- f(x), \qquad x\in G.
 \end{equation}
If $q\in [1,\infty]$ and $\alpha\in(0,1)$, we consider the associated functionals (to lighten the notation, we write $V(u) = \lambda(B_{u})$ for $u>0$)
\[
\mathcal{S}^{{\rm loc}, q}_{\alpha}f(x) =\bigg(\int_0^1\bigg[ \frac{1}{u^{\alpha}  V(u)}\int_{|y|<u}|\oD_y f(x)|\, \dd\lambda(y) \bigg]^q\,\frac{\dd u}{u}\bigg)^{1/q}, \qquad x\in G,
\]
and, if also $p\in [1,\infty]$,
\[
\mathcal{A}^{p,q}_\alpha(f)= 
\bigg(\int_{|y|\leq 1} \bigg(\frac{\|\oD_y  f\|_{p}}{|y|^\alpha}\bigg) ^q \, \frac{\dd\lambda(y)}{V(|y|)}\bigg)^{1/q}.
\]
By~\cite[Theorem 8]{BPV-GAHA}, if $p,q\in (1,\infty)$ and $\alpha \in (0,1)$, we have
\begin{equation}\label{TLequiv01}
\|f\|_{F^{p,q}_\alpha} \approx    \|\mathcal{S}^{{\rm loc}, q}_{\alpha}f\|_{p}+\|f\|_{p},
 \end{equation}
while if $p,q\in [1,\infty]$ and $\alpha \in (0,1)$, then by~\cite[Theorem 9]{BPV-GAHA}
\begin{equation}\label{Besovequiv01}
 \|f\|_{B^{p,q}_{\alpha}} \approx \|f\|_{p} + \mathcal{A}^{p,q}_\alpha(f).
 \end{equation}

Let us stress that though the functionals $\mathcal{S}^{{\rm loc}, q}_{\alpha}$ and  $\mathcal{A}^{p,q}_\alpha$ are defined in~\cite{BPV-GAHA} in terms of a \emph{right} Haar measure while here in terms of $\lambda$, the two versions are equivalent as the modular function is bounded above and below away from $0$ on $B_1$.
 
For later purposes, we shall prove some properties of the finite
differences $\oD_{y}$ which will be of use. We first note that $\oD_y$
satisfies the following Leibniz rule: given two functions $f$ and $g$, 
\begin{equation}\label{leibnizD1}
\oD_y (fg)(x) = \oD_y g (x)f(x) + g(xy^{-1})\oD_{y}f(x),  \qquad x,y\in G.
\end{equation}
We observe moreover that, if $\phi$ is a function such that $\supp \phi \subseteq x B_r$ for some $x \in G$ and $r>0$, then for all  $y\in B_1$
\[
\supp \oD_{y} \phi \subseteq x B_{r+1}.
\]
\begin{lemma}\label{lemmader}
Suppose $p\in [1,\infty]$ and $|y|\leq 1$. Then 
\begin{itemize}
\item[\emph{(1)}] $\|\oD_{y} f \|_p \lesssim \| f\|_p$;
\item[\emph{(2)}] $ \|\oD_{y} f \|_p \lesssim |y|   \sum_{j=1}^{\kappa} \|X_j f\|_p$;
\item[\emph{(3)}] for all $k\in \N$ and $\psi \in C_c^{\infty}$ there exist $c=c(k)>0$ and $C(\psi)>0$ such that for all $t\in (0,1)$
\[
 \| \oD_y (\psi \Ls^{k}\e^{-t \Ls} f)\|_p \leq C(\psi) t^{-\frac{1}{2}-k}
 |y| \| \mathbf{1}_{\supp \psi}\,\e^{-ct \Ls} |f| \|_p,
\]
where $C(\psi)$ depends only on $\|\psi\|_{L^{\infty}_{1}}$ (see \eqref{Sobinfty}).
\end{itemize}
\end{lemma}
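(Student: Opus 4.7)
The plan is to prove the three items in order: (1) is immediate, (2) is the key mean-value computation, and (3) combines (2) with the heat kernel estimate~\eqref{heatestimate} and the Leibniz rule~\eqref{leibnizD1}. Item (1) follows from the $L^p$-boundedness of right translation by elements of $B_1$: the change of variables $x\mapsto xy^{-1}$ introduces a modular factor $\delta(y)^{1/p}$ that is bounded uniformly for $|y|\leq 1$, and the triangle inequality concludes.

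For (2), I would integrate along a sub-Riemannian geodesic. By definition of the Carnot--Carath\'eodory distance, for any $y\in B_1$ there is a horizontal curve $\gamma\colon [0,|y|]\to G$ with $\gamma(0)=e$, $\gamma(|y|)=y^{-1}$, and $\gamma'(s)=\sum_j a_j(s)X_j(\gamma(s))$ a.e., with $\sum_j |a_j(s)|^2\leq 1$. Since the $X_j$ are left-invariant, $\frac{d}{ds}[f(x\gamma(s))]=\sum_j a_j(s)(X_j f)(x\gamma(s))$, and the fundamental theorem of calculus yields the pointwise bound
\[
 |\oD_y f(x)|\leq \int_0^{|y|}\sum_j |(X_j f)(x\gamma(s))|\,ds.
\]
Taking $L^p$ norms, applying Minkowski's integral inequality, and invoking (1) to absorb the right translations by $\gamma(s)\in B_1$ gives the claim.

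For (3), apply the Leibniz rule~\eqref{leibnizD1} with $g=\Ls^k e^{-t\Ls}f$ to write
\[
 \oD_y(\psi g) = (\oD_y g)\,\psi + g(\cdot y^{-1})\,\oD_y\psi.
\]
Writing $\Ls^k e^{-t\Ls}f = f*(\Ls^k p_t)$ as a right convolution, the left-invariance of $X_j$ and $\Ls$ yields $X_j\Ls^k e^{-t\Ls}f = f*(X_j\Ls^k p_t)$; expanding $X_j\Ls^k$ as an operator of order $2k+1$ in the $X_j$'s and invoking~\eqref{heatestimate} then gives the pointwise estimates $|g(z)|\lesssim t^{-k}e^{-bt\Ls}|f|(z)$ and $|X_j g(z)|\lesssim t^{-k-1/2}e^{-bt\Ls}|f|(z)$. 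The first term in the Leibniz splitting is controlled pointwise by the argument of (2) applied to $g$ combined with the second estimate above and the cutoff $\mathbf{1}_{\supp\psi}$ coming from $\psi$; the second term uses the pointwise bound $|\oD_y\psi|\leq |y|\,\|\psi\|_{L^\infty_1}$, a variant of (2) with $L^\infty$ in place of $L^p$, together with the first estimate above. Taking $L^p$ norms, using $t^{-k}\leq t^{-k-1/2}$ on $(0,1)$, and absorbing the small shifts $x\mapsto x\gamma(s)$ and $x\mapsto xy^{-1}$ through the boundedness of the modular function on $B_1$ produces the claim. The main technical point is the support bookkeeping: after the shifts, both pieces a priori live in a $B_1$-neighborhood of $\supp\psi$ rather than in $\supp\psi$ itself, and one has to absorb this mild enlargement via an appropriate choice of $c$.
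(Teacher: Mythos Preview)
Your arguments for (1) and (2) are correct and essentially the same as the paper's.

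For (3), however, your route via the Leibniz rule~\eqref{leibnizD1} for $\oD_y$ runs into a genuine support problem that cannot be fixed by ``an appropriate choice of $c$''. After your split, the first piece $(\oD_y g)\,\psi$ is indeed supported in $\supp\psi$, but when you insert the pointwise bound from (2), the values $X_jg(x\gamma(s))$ are evaluated at points $x\gamma(s)$ lying in a $B_{|y|}$-neighbourhood of $\supp\psi$, not in $\supp\psi$ itself. After the change of variable this produces $\|\mathbf{1}_{(\supp\psi)\gamma(s)}\,\e^{-bt\Ls}|f|\|_p$, and there is no mechanism by which modifying the constant $c$ in the heat semigroup can shrink the indicator back to $\mathbf{1}_{\supp\psi}$: the Gaussian factor in~\eqref{heatabovebelow} does not absorb a spatial shift of order $1$ when $t\in(0,1)$. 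The second piece $g(\cdot\,y^{-1})\oD_y\psi$ has the same defect, since $\oD_y\psi$ is supported in $\supp\psi\cup(\supp\psi)y$.

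The paper avoids this entirely by reversing the order of operations: it applies (2) directly to the product $\psi\,\Ls^k\e^{-t\Ls}f$, obtaining
\[
\|\oD_y(\psi\,\Ls^k\e^{-t\Ls}f)\|_p \lesssim |y|\sum_j\|X_j(\psi\,\Ls^k\e^{-t\Ls}f)\|_p,
\]
and only then uses the \emph{derivative} Leibniz rule $X_j(\psi g)=X_j\psi\cdot g+\psi\,X_jg$. Both summands are now genuinely supported in $\supp\psi$, and the pointwise heat-kernel bounds from~\eqref{heatestimate} and~\eqref{heatabovebelow} give the stated estimate with $\mathbf{1}_{\supp\psi}$ exactly. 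The moral is: differentiate the product before localising, not after.
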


\begin{proof}
The proof of (1) is straightforward, since
\begin{align*}
\|\oD_{y} f \|_p \lesssim \| f(\cdot \, y^{-1}) \|_p + \|f\|_{p} \leq  (\delta^{1/p}(y) +1) \|f\|_p \lesssim \|f\|_p .
\end{align*}
We then prove (2), and argue as in the proof of~\cite[Theorem 3.1]{BPVBLMS}. Given $y\in B_{1}$, let $\gamma_y \colon [0,|y|] \to G$ be a horizontal subunit path such that $\gamma_y(0)=e$, $\gamma_y({|y|})=y^{-1}$, $|\gamma_y(s)| \leq |y|$ for every $s\in [0,|y|]$.

For every $x\in G$, by Taylor's formula applied to the function $s\mapsto f(x \gamma_y(s))$ and H\"older's inequality, one has
\begin{align*} 
|f(xy^{-1}) - f(x)|^p
&\leq \bigg(\int_0^{|y|} \sum_{{ j=1 }}^\kappa |X_jf (x\gamma_y(s))|\, \dd s\bigg)^p \nonumber \\
  & \leq |y|^{p-1}\int_0^{|y|}  \sum_{{  j=1}}^\kappa  |X_j f(x\gamma_y(s))|^p\, \dd s,
\end{align*}
so that
\begin{align*}
 \|\oD_{y} f \|_p^p 
 &\leq |y|^{p-1} \int_0^{|y|} \int_G   \sum_{{  j=1 }}^\kappa |X_j f(x\gamma_y(s))|^p\, \dd \lambda(x) \, \dd s \\
 & \lesssim  |y|^{p-1}  \sup_{s\in[0,|y|]}   \delta^{-p}(\gamma_y(s))  \int_0^{|y|}  \sum_{{  j=1 }}^\kappa\| X_j f \|_p^p \, \dd s \lesssim |y|^p  \sum_{{  j=1 }}^\kappa\| X_j f \|_p^p.
\end{align*}
To prove (3), observe that by (2)
\begin{align*}
\|\oD_y (\psi \e^{-t\Ls}\Ls^{k}f)\|_p
& \lesssim |y| \sum_{j=1}^{\kappa} \| X_j(\psi \Ls^{k}\e^{-t\Ls}f)\|_p,
\end{align*}
and for $t\in (0,1)$, by~\eqref{heatestimate}
\begin{align*}
 \| X_j(\psi \Ls^{k}\e^{-t\Ls}f)\|_p 
& \leq \| X_j \psi\cdot  \Ls^{k}\e^{-t\Ls}f\|_p + \| \psi X_j \Ls^{k}\e^{-t\Ls}f\|_p\\
& \lesssim  \|\mathbf{1}_{\supp \psi}\Ls^{k} \e^{-t\Ls}f\|_p + \| \mathbf{1}_{\supp \psi}  X_j \Ls^{k}\e^{-t\Ls}f\|_p\\
& \lesssim  t^{-k}\|\mathbf{1}_{\supp \psi} \e^{-c_{3}t\Ls}|f|\|_p + t^{-\frac12-k} \| \mathbf{1}_{\supp \psi} \e^{-c_{4}t\Ls}|f|\|_p\\
& \lesssim t^{-\frac12-k}\| \mathbf{1}_{\supp \psi}  \e^{-c t\Ls}|f|\|_p,
\end{align*}
for some $c_{3},c_{4},c>0$ by~\eqref{heatabovebelow}, and this completes the proof.
\end{proof}

\subsection{A covering lemma}
The following covering lemma will be used all throughout. It can be obtained as~\cite[Lemma 1]{Anker}, see also~\cite[Lemma 2.3]{B1}, with minor modifications. For the reader's convenience, we provide all the details. 
\begin{lemma}\label{covering}
There exists a countable family $\mathcal{U} = \{x_n\colon n\in \N\}\subset G$ such that
\begin{itemize}
\item[\emph{(1)}] $G=\bigcup_{n} x_n B_1$;
\item[\emph{(2)}] for all $m\in \N$ there exists $N_{m}\in \N$ such that each element of $G$ belongs to at most $N_{m}$ sets $xB_m$, $x\in  \mathcal{U}$;
\item[\emph{(3)}] for all $n\in \N$ and $m\in \N$ there are at most $N_{2m}$ elements $x\in \mathcal{U}$ such that $xB_m \cap x_{n}B_m$ is nonempty;
\item[\emph{(4)}]  for all $m\in \N$ there exist $N_{m}+1$ disjoint families of indices $I_{k}$, $k=1,\dots, N_{m}+1$ with the property that
\[
\N =\bigcup_{k=1}^{N_{m}+1} I_{k}, \qquad  \forall \, k=1,\dots, N_{m}+1, \; \;  d_C(x_{\ell},x_{h}) \geq m \quad \forall \ell,h \in I_{k}, \, \ell\neq h.
\] 

\end{itemize} 
\end{lemma}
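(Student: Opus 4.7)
The plan is to build $\mathcal{U}$ as a maximal $1$-separated set and then read off the four properties from left-invariance of $\lambda$ and of $d_C$, together with a standard greedy coloring for (4).

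First I would let $\mathcal{U}=\{x_n\}$ be a maximal subset of $G$ with the property that $d_C(x_n,x_m)\ge 1$ for $n\neq m$; existence follows from Zorn's lemma applied to the poset of such separated sets. By maximality, every $y\in G$ lies within distance $<1$ of some $x_n$, which is exactly (1). For countability, observe that the balls $x_nB_{1/2}$ are pairwise disjoint (any point in $x_nB_{1/2}\cap x_mB_{1/2}$ would force $d_C(x_n,x_m)<1$); since $\lambda(x_nB_{1/2})=\lambda(B_{1/2})>0$ by left-invariance of $\lambda$ and $G$ is $\sigma$-finite, the index set is at most countable.

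Next I would prove (2). Fix $y\in G$ and suppose $x_{n_1},\dots,x_{n_k}$ are all the elements of $\mathcal{U}$ lying in $yB_m$. For each $j$, the ball $x_{n_j}B_{1/2}$ is contained in $yB_{m+1/2}$ because $d_C$ is left-invariant: $d_C(x_{n_j}z,y)\le d_C(y,x_{n_j})+d_C(x_{n_j},x_{n_j}z)<m+1/2$ for $z\in B_{1/2}$. Since these balls are disjoint, left-invariance gives
\[
k\,\lambda(B_{1/2})=\sum_{j=1}^k\lambda(x_{n_j}B_{1/2})\le\lambda(yB_{m+1/2})=\lambda(B_{m+1/2}),
\]
so $k\le N_m:=\lceil \lambda(B_{m+1/2})/\lambda(B_{1/2})\rceil$, which is finite by the local doubling inequality. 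Property (3) is then immediate by applying (2) at the point $x_n$ with parameter $2m$: $xB_m\cap x_nB_m\ne\emptyset$ forces $d_C(x,x_n)<2m$, i.e.\ $x\in x_nB_{2m}$, so the number of such $x\in\mathcal{U}$ is at most $N_{2m}$.

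For (4) I would run a greedy coloring on the graph with vertex set $\mathcal{U}$ and an edge between $x_\ell$ and $x_h$ whenever $d_C(x_\ell,x_h)<m$. By (2), the closed neighborhood of any $x_h$ in this graph (the set of $x_\ell\in x_hB_m$) has cardinality at most $N_m$, so the degree is at most $N_m-1$. Enumerating $\mathcal{U}=\{x_n\}_{n\in\bbN}$ and assigning to $x_n$ the least color in $\{1,\dots,N_m+1\}$ not already used by a previously colored neighbor succeeds (we use at most $N_m$ colors, which is within budget); the color classes $I_k$ are pairwise disjoint families whose union is $\bbN$ and whose elements are pairwise at distance $\ge m$, yielding (4).

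The only mildly subtle point is (2), where one must remember that $\lambda$ is merely \emph{left} Haar (the group need not be unimodular). This is not actually an obstacle because the argument translates balls on the \emph{left}, so $\lambda(x_nB_{1/2})=\lambda(B_{1/2})$ and $\lambda(yB_{m+1/2})=\lambda(B_{m+1/2})$ both follow directly from left-invariance; the asymmetry of $\delta$ plays no role here. Everything else is bookkeeping.
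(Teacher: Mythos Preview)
Your proof is correct and follows essentially the same route as the paper's: a maximal separated family via Zorn's lemma, a packing/volume count for (2), the trivial reduction of (3) to (2), and a greedy argument for (4). The only cosmetic differences are that the paper phrases the separation condition as ``the balls $xB_{1/2}$ are pairwise disjoint'' rather than ``$d_C\ge 1$'', and builds the color classes in (4) by extracting successive maximal $m$-separated subfamilies rather than by sequential coloring; both variants yield the same bounds. One small remark: finiteness of $\lambda(B_{m+1/2})$ for $m\ge 1$ does not follow from the \emph{local} doubling inequality (which is stated only for $r\le 1$) but simply from the fact that Carnot--Carath\'eodory balls in a connected Lie group are relatively compact, hence of finite Haar measure.
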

\begin{proof}
By Zorn's lemma, there exists a countable maximal subset $\mathcal{U}$ of $G$ such that the sets $x B_{1/2}$,  $x\in \mathcal{U}$, are pairwise disjoint (recall that a connected Lie group with the topology of the Carnot--Carath\'eodory metric is second-countable, hence separable). Now, take any element $z\in G$. By maximality of $\mathcal{U}$, the set $z B_{1/2}$ meets at least one set $xB_{1/2}$, $x\in \mathcal{U}$. It follows that $z\in xB_{1/2} B_{1/2}^{-1} \subseteq  xB_1$ and (1) is proved.

\smallskip

Pick now $m\in \N$ and suppose that a set $ x_{0} B_m$ meets $N=N_{m}$ other sets $x_{1} B_m, \dots, x_{N} B_m$, with $x_{j}\in \mathcal{U}$. Then  $x_{0}B_mB_m^{-1} \ni x_{j}$, whence $x_{0}B_mB_m^{-1}B_{1/2}$ contains the sets $x_j B_{1/2}$, $j=0, \dots, N$, which are pairwise disjoint. It follows that
\begin{align*}
 \lambda (B_mB_m^{-1}B_{1/2})
&=  \lambda (x_{0}B_mB_m^{-1}B_{1/2}) \geq (1+N)  \lambda (B_{1/2})
\end{align*}
whence
\[
(1+N) \leq \frac{ \lambda (B_mB_m^{-1}B_{1/2})}{\lambda (B_{1/2})} \leq \frac{\lambda (B_{2m+1/2})}{\lambda (B_{1/2})},
\]
and (2) is proved.

\smallskip

To prove (3), observe that if $xB_{m} \cap x_{n}B_{m} \neq \emptyset$, then $d(x_{n},x) < 2m$, thus $x_{n} \in xB_{2m}$. By~(2), the number of such $x$'s is at most $N_{2m}$.

\smallskip

It remains to prove (4). Consider a maximal family $\mathcal{U}_{1}$ of points in $\mathcal{U}$ such that $x_{1} \in \mathcal{U}_{1}$ and $d(x_\ell, x_{h}) \geq m$ for all $x_{\ell},x_{h}\in \mathcal{U}_{1}$ with $\ell \neq h$. Then pick $x_{n_{2}}\in \mathcal{U} \setminus \mathcal{U}_{1}$, and consider a maximal family $\mathcal{U}_{2}$ of points in $\mathcal{U} \setminus \mathcal{U}_{1}$ such that $x_{n_2} \in \mathcal{U}_{2}$ and $d(x_\ell, x_{h}) \geq m$ for all $x_{\ell},x_{h}\in \mathcal{U}_{2}$ with $\ell \neq h$. Proceed recursively: at step $k$, consider $x_{n_{k}}\in \mathcal{U} \setminus \bigcup_{j=1}^{k-1} \mathcal{U}_{j}$ (if any) and consider a maximal family $\mathcal{U}_{k}$ of points in $\mathcal{U} \setminus \bigcup_{j=1}^{k-1}\mathcal{U}_{j}$ such that $x_{n_k} \in \mathcal{U}_{k}$ and $d(x_\ell, x_{h}) \geq m$ for all $x_{\ell},x_{h}\in \mathcal{U}_{k}$ with $\ell \neq h$.

Suppose by contradiction that one can proceed for more than $N_{m}+1$ steps. Then there exists an element $x_{n_{N_m+2}} \in \mathcal{U} \setminus \bigcup_{j=1}^{N_{m}+1} \mathcal{U}_{j}$; but by maximality of each of the $\mathcal{U}_{j}'s$, for all $j=1, \dots, N_{m}+1$ there is $\tilde x_{j}\in \mathcal{U}_{j}$ such that $d(x_{n_{N_m+2}} , \tilde x_j)<m $. Then
\[
x_{n_{N_{m}+2}} \in \tilde x_{j}B_m, \qquad \forall j=1,\dots, N_{m}+1,
\] 
and this contradicts (2). The required $I_{j}$'s are then the indices of the elements in $\mathcal{U}_{j}$.
\end{proof}
We shall not stress the dependence of $N$ on $m$ in the following, as this will not play any role. We shall refer to points~(2) and~(3) in Lemma~\ref{covering} as the \emph{bounded overlap property}.

\subsection{Pointwise multipliers}
We begin by setting some notation. First, we pick a smooth function $\eta$ on $G$ such that $0\leq \eta \leq 1$, $\eta=1$ on $B_1$ and $\supp \eta \subseteq B_2$. Such a function will be~\emph{fixed} all throughout. Then we consider the following family, to which $\eta$ belongs.

\begin{definition}\label{C-def}
We shall denote by $\sC$ the class of smooth cut-off functions 
\[
\sC:= \big\{ \xi \in C^\infty_c:\,  0\leq \xi \leq 1,\,  \xi=1 \text{ on } B_1\text{  and }   \supp \xi  \subseteq B_m \text{ for some }m\in\N\big\}.
\]
Given any $\xi \in\sC$, we set
\[
\tilde \xi_n = \xi(x^{-1}_n \cdot)  ,  \quad\text{ and } \quad  \xi_{n} = \frac{\tilde \xi_{n}}{\sum_{k} \tilde \xi_{k}},
\]
where $x_n \in \mathcal{U}$.
\end{definition}
By Lemma~\ref{covering}, for every $\xi \in \sC$ there exists $N\in \N$ such that
\begin{equation}\label{almostpartition}
1\leq \sum_{n\in \N}\xi_n(x) \leq N, \qquad x\in G,
\end{equation}
since for all $x\in G$ there are at most say $N$ nonzero terms in the sum above.  Hence, for all $\xi \in \sC$ 
\[
\sum_{n} \xi_{n}=1, \qquad \supp \xi_{n} = \supp \tilde \xi_{n} = x_{n}\supp \xi \subseteq x_{n}B_{m},
\]
where $m \in \N$ is such that $\supp \xi \subseteq B_m$, and still for all $x\in G$ there are at most $N$ nonzero terms in the sum for some $N\in \N$. Though it is not true that $\xi_{n}$ is a (left) translate of $\xi$, it is still true that for $p\in [1,\infty]$
\begin{equation}\label{unifbound}
\sup_{n} \sup_{|J|=m} \| X_{J}\xi_{n}\|_{p} <\infty.
\end{equation}
Such an estimate is a consequence of~\eqref{almostpartition}, the fact that $\tilde \xi_{n}$ is a left translate of $\xi$ and the left invariance of the norm and of the vector fields $X_j$. 

In particular, all the above holds for $\eta$.

\begin{definition}
Suppose $p,q\in [1,\infty]$ and $\alpha>0$. We say that a function $f$ is uniformly locally in $X^{p,q}_\alpha$, and we write $f\in X^{p,q}_{\alpha, \unif}$, if  
\[
\| f \|_{X^{p,q}_\alpha, \unif} = \sup_{n\in \N}\| f \eta_n \|_{X^{p,q}_\alpha}<\infty.
\]
We denote by $MX^{p,q}_\alpha$  the space of multipliers of $X^{p,q}_\alpha $, namely the space of functions $f$ such that $\|fg\|_{X^{p,q}_\alpha} \leq C(f) \|g\|_{X^{p,q}_\alpha} $ for all $g \in X^{p,q}_\alpha$, endowed with the norm $\|f\|_{MX^{p,q}_\alpha} $ of the infimum of all such $C(f)$.
\end{definition}
In the following lemma we prove few basic facts which will be of use all throughout. In particular, we show that for the range of indices which we shall be interested in the definition of $X^{p,q}_{\alpha, \unif}$ is independent of the choice of $\eta$. In other words, if one replaces $\eta$ with any other $\xi \in \sC$, then the two norms are equivalent.

\begin{lemma}\label{lemmabasic}
Suppose $\xi,\phi \in \sC$ and let $f$ be a function. Then the following holds. 
\begin{itemize}
\item[\emph{(1)}]
  For all $p\in [1,\infty)$ 
\begin{equation}\label{maindec}
\|f\|_{p}^{p} \approx  \sum_{n\in\N}\|f\xi_{n}\|_{p}^{p}, \qquad \|f\|_{\infty} \approx \sup_{n\in\N}\|f\xi_{n}\|_{\infty}.
\end{equation}
\item[\emph{(2)}] For $p,q\in [1,\infty]$ and $\alpha >d/p$ or $p\in
  [1,\infty]$, $q=1$ and $\alpha =d/p$ if $X=B$, and  $p,q\in
  (1,\infty)$ and $\alpha >d/p$ if $X=F$, 
  \begin{equation}\label{equivsenzasup}
\begin{split}
 &\|f\tilde \xi_{n}\|_{X^{p,q}_\alpha} \approx \| f \xi_n \|_{X^{p,q}_\alpha} \quad \forall \, n\in\N,  \\
 &\sup_{n\in \N}\| f \xi_n \|_{X^{p,q}_\alpha} \approx \sup_{n\in \N}\| f \phi_n \|_{X^{p,q}_\alpha} .
\end{split}
\end{equation}
\item[\emph{(3)}] If $J$ is a multi-index and $p,q$ are as in {\rm (2)}, then
\begin{align}\label{equivaderiv}
\sup_{n\in\N} \|\xi_{n}X_{J}f\|_{X^{p,q}_\alpha} \lesssim \sup_{n\in \N}\|X_{J} (f \xi_n) \|_{X^{p,q}_\alpha}.
 \end{align}
\end{itemize}
\end{lemma}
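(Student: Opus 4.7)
The bounds use only that $\{\xi_n\}$ is an exact partition of unity with $\supp\xi_n\subseteq x_nB_m$ and at most $N$ nonzero terms at each point. For $p<\infty$, since $0\leq\xi_n\leq 1$ we have $\xi_n^p\leq\xi_n$, so
\[
\sum_n\|f\xi_n\|_p^p \leq \sum_n\int|f|^p\xi_n \,\dd\lambda = \int|f|^p\sum_n\xi_n\,\dd\lambda = \|f\|_p^p,
\]
and the reverse follows from the pointwise inequality $(\sum_n\xi_n)^p\leq N^{p-1}\sum_n\xi_n^p$ (at most $N$ summands nonzero) integrated against $|f|^p$. For $p=\infty$, $\sup_n\|f\xi_n\|_\infty\leq\|f\|_\infty$ is trivial; conversely any $x\in G$ lies in some $x_nB_1$ where $\tilde\xi_n\equiv 1$ and $\sum_k\tilde\xi_k\leq N$, hence $\xi_n(x)\geq 1/N$ and $|f(x)|\leq N\|f\xi_n\|_\infty$.

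\textbf{Part (2).} Both equivalences rest on the algebra property~\eqref{algebra} (available in the indicated range) and the left-invariance of the $X^{p,q}_\alpha$-norm. Setting $\Omega_n=\sum_k\tilde\xi_k$ one has $\tilde\xi_n=\Omega_n\xi_n$ with $\Omega_n\geq 1$; pick $\tilde\chi_n$ the left-translate by $x_n$ of a fixed smooth function equal to $1$ on $\supp\xi$, so that $\tilde\chi_n\equiv 1$ on $\supp\tilde\xi_n$. By the bounded overlap property, $\Omega_n\tilde\chi_n$ is a sum of at most $N$ left-translates of a smooth compactly supported function, so left-invariance makes $\|\Omega_n\tilde\chi_n\|_{X^{p,q}_\alpha}$ uniformly bounded in $n$. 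Since $f\tilde\xi_n=(f\xi_n)(\Omega_n\tilde\chi_n)$, the algebra property yields $\|f\tilde\xi_n\|_{X^{p,q}_\alpha}\lesssim\|f\xi_n\|_{X^{p,q}_\alpha}$; the reverse is identical via $\xi_n=\tilde\xi_n/\Omega_n$ using that $1/\Omega_n$ is smooth and uniformly bounded on $\supp\tilde\xi_n$. For the second equivalence, given $\xi,\phi\in\sC$ I choose $\sigma\in\sC$ large enough that $\tilde\sigma_n\equiv 1$ on $\supp\tilde\xi_n\cup\supp\tilde\phi_n$. On one hand, $\tilde\xi_n=\tilde\sigma_n\tilde\xi_n$ together with $\|\tilde\xi_n\|_{X^{p,q}_\alpha}=\|\xi\|_{X^{p,q}_\alpha}$ gives $\|f\tilde\xi_n\|_{X^{p,q}_\alpha}\lesssim\|f\tilde\sigma_n\|_{X^{p,q}_\alpha}$. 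On the other, $\tilde\sigma_n=\sum_k\tilde\sigma_n\xi_k$ with only finitely many (uniformly bounded) $k$ contributing, and algebra gives $\|f\tilde\sigma_n\|_{X^{p,q}_\alpha}\lesssim\sup_k\|f\xi_k\|_{X^{p,q}_\alpha}$. Combining with the first bullet yields the claim.

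\textbf{Part (3).} I proceed by induction on $|J|$, the case $|J|=0$ being trivial. The Leibniz rule gives
\[
X_J(f\xi_n) = \xi_n X_Jf + \sum_{|J_2|\geq 1} c_{J_1,J_2}\, X_{J_1}f\cdot X_{J_2}\xi_n,
\]
with $|J_1|<|J|$ in each remainder summand. Choosing $\psi\in\sC$ with $\tilde\psi_n\equiv 1$ on $\supp\tilde\xi_n$, each summand factors as $(X_{J_1}f\cdot\tilde\psi_n)(X_{J_2}\xi_n)$; since $\|X_{J_2}\xi_n\|_{X^{p,q}_\alpha}$ is uniformly bounded by left-invariance and~\eqref{unifbound}, the algebra property yields
\[
\|X_{J_1}f\cdot X_{J_2}\xi_n\|_{X^{p,q}_\alpha}\lesssim\|X_{J_1}f\cdot\tilde\psi_n\|_{X^{p,q}_\alpha}\approx\|(X_{J_1}f)\psi_n\|_{X^{p,q}_\alpha}
\]
by part~(2). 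The inductive hypothesis at order $|J_1|<|J|$ then controls the supremum over $n$ by $\sup_n\|X_{J_1}(f\psi_n)\|_{X^{p,q}_\alpha}$.

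The main obstacle is the closing step: one must dominate $\sup_n\|X_{J_1}(f\psi_n)\|_{X^{p,q}_\alpha}$ for $|J_1|<|J|$ by the assumed $\sup_n\|X_J(f\xi_n)\|_{X^{p,q}_\alpha}$, i.e., show that top-order control of the localized derivative encodes all lower-order control. I would resolve this by strengthening the inductive statement to carry all lower-order derivatives on the right, together with a lifting characterization of the smoother localized space (so that finiteness of $\sup_n\|X_J(f\xi_n)\|_{X^{p,q}_\alpha}$ implies finiteness of $\sup_n\|X_{J'}(f\xi_n)\|_{X^{p,q}_\alpha}$ for all $|J'|\leq|J|$, modulo the uniform-local $L^p$ norm of $f$, itself controlled via part~(2)); the lower-order remainders are then absorbed.
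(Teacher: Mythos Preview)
Your proofs of (1) and (2) follow the paper's closely: partition-of-unity arithmetic for (1), and the algebra property \eqref{algebra} together with left-invariance for (2). One point that deserves a bit more care is the reverse bound in the first line of \eqref{equivsenzasup}: to get $\|\tilde\chi_n/\Omega_n\|_{X^{p,q}_\alpha}$ bounded uniformly in $n$ it is not quite enough to say ``$1/\Omega_n$ is smooth and uniformly bounded''; the paper handles this via the embedding $L^p_k\hookrightarrow X^{p,q}_\alpha$ and explicit derivative bounds for $(\sum_m\tilde\xi_m)^{-1}\tilde\psi_n$. But the strategy is the same and correct.

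Part (3), however, has a genuine gap that your proposed fix does not close. The ``closing step'' you identify---that $\sup_n\|X_J(f\xi_n)\|_{X^{p,q}_\alpha}$ should dominate $\sup_n\|X_{J'}(f\xi_n)\|_{X^{p,q}_\alpha}$ for $|J'|<|J|$, even ``modulo the uniform-local $L^p$ norm of $f$''---is false for an arbitrary $f$: top-order derivative control does not imply lower-order control without some interpolation input, and nothing in the hypotheses supplies the $L^p$ side (part (2) only compares different localizations, it says nothing about recovering $\|f\xi_n\|_p$ from $\|X_J(f\xi_n)\|$). If instead you strengthen the inductive statement so that the right-hand side carries $\sum_{|J'|\leq |J|}\sup_n\|X_{J'}(f\xi_n)\|$, the induction does close, but what you then prove is strictly weaker than \eqref{equivaderiv}.

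The paper bypasses all of this with a direct one-line argument that you missed: since $\sum_k\xi_k\equiv 1$, one has $X_J f=\sum_k X_J(f\xi_k)$, and on $\supp\xi_n$ only a bounded number of terms $k=k_1^n,\dots,k_N^n$ survive. Thus
\[
\xi_n X_J f=\sum_{j=1}^N \xi_n\, X_J(f\xi_{k_j^n}),
\]
and the algebra property together with $\|\xi_n\|_{X^{p,q}_\alpha}\lesssim\|\tilde\xi_n\|_{X^{p,q}_\alpha}=\|\xi\|_{X^{p,q}_\alpha}$ (from the first line of (2)) gives \eqref{equivaderiv} immediately. No induction, no Leibniz expansion, no lower-order remainders to absorb.
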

\begin{proof}
To prove (1), consider first the case $p<\infty$ and observe that
\[
\|f\|_{p}^{p} = \int_{G} |f|^{p}\, \dd \lambda =  \int_{G} \Big(\sum_{n}|f| \xi_{n}\Big)^{p}\, \dd \lambda.
\]
For all $x\in G$, by Lemma~\ref{covering} there are at most $N$ functions $\xi_{n_1^{x}}, \dots, \xi_{n_{N}^{x}}$ such that $\xi_{n_{j}^{x}}(x)\neq 0$, with $N$ independent of $x$. Thus,
\begin{equation}\label{intp}
\int_{G} \Big(\sum_{n}|f| \xi_{n}\Big)^{p}\, \dd \lambda   \approx  \int_{G} \sum_{n}|f \xi_{n}|^{p}\, \dd \lambda  =   \sum_{n}\|f\xi_{n}\|_{p}^{p},
\end{equation}
where the constants depend only on $p$ and $N$. The case $p=\infty$ is similar: the inequality 
\[
\|f\xi_{n}\|_{\infty} \leq \|\xi_{n}\|_{\infty}\|f\|_{\infty} \lesssim \|f\|_{\infty}
\]
follows by~\eqref{unifbound}; moreover, for $x\in G$
\[
|f|(x) = \sum_{n}|f|(x)\xi_{n}(x)  =  \sum_{j=1}^{N}|f|(x)\xi_{n_{j}^{x}}(x)  \leq N \sup_{n} \sup_{x} |f(x)\xi_{n}(x)|,
\]
so that also the other inequality follows.

The equivalences stated in (2) and (3) are consequences of the algebra property, cf.~\eqref{algebra} (wherefrom the restriction on the indices). Let $\psi\in\sC$ be  such that $\psi= 1$ on $\supp \xi$. Observe that $\xi=\psi\xi$, whence $\tilde \xi_{k} = \tilde \xi_{k}\tilde \psi_{k}$ and  $\xi_{k} = \xi_{k}\tilde \psi_{k}$ for all $k\in\N$. 

For $n\in \N$, by~\eqref{algebra}
\begin{align*}
\| f\tilde \xi_{n}\|_{X^{p,q}_{\alpha}} 
&= \Big\|\Big(\sum_{m} \tilde \xi_{m}\Big) f \xi_{n} \Big\|_{X^{p,q}_{\alpha}} \\
& =  \Big\|\Big(\sum_{m} \tilde \xi_{m} \Big)\tilde \psi_{n} f \xi_{n}\Big\|_{X^{p,q}_{\alpha}}   \lesssim  \Big\|\Big(\sum_{m} \tilde \xi_{m} \Big)\tilde \psi_{n}\Big\|_{X^{p,q}_{\alpha}}  \| f \xi_{n}\|_{X^{p,q}_{\alpha}}.
\end{align*}
By left-invariance, the bounded overlap property,  and the algebra  property we now have
\begin{align*}
\Big\|\Big(\sum_{m} \tilde \xi_{m} \Big)\tilde \psi_{n}\Big\|_{X^{p,q}_{\alpha}}
 &  =   \Big\|\Big(\sum_{j=1}^N \tilde \xi_{m_j} \Big)\psi  \Big\|_{X^{p,q}_{\alpha}} \\
&\leq \Big\|\Big(\sum_{j=1}^N \tilde \xi_{m_j} \Big)\Big\|_{X^{p,q}_{\alpha}}    \| \psi \|_{X^{p,q}_{\alpha}} \leq N   \| \xi \|_{X^{p,q}_{\alpha}}   \| \psi \|_{X^{p,q}_{\alpha}},
\end{align*}
the last quantity being finite since it is the norm of a smooth and compactly supported function. 
We conclude that
\[
\| f\tilde \xi_{n}\|_{X^{p,q}_{\alpha}}  \lesssim \| f \xi_{n}\|_{X^{p,q}_{\alpha}}.
\]
To prove the converse inequality in~\eqref{equivsenzasup} we first write
\begin{align*}
\| f  \xi_{n}\|_{X^{p,q}_{\alpha}} 
&= \Big\|\Big(\sum_{m} \tilde \xi_{m}\Big)^{-1} f \tilde \xi_{n} \Big\|_{X^{p,q}_{\alpha}} \\
& =  \Big\|\Big(\sum_{m} \tilde \xi_{m} \Big)^{-1}\tilde \psi_{n} f \tilde \xi_{n}\Big\|_{X^{p,q}_{\alpha}}   \lesssim  \Big\|\Big(\sum_{m} \tilde \xi_{m} \Big)^{-1}\tilde \psi_{n}\Big\|_{X^{p,q}_{\alpha}}  \| f \tilde \xi_{n}\|_{X^{p,q}_{\alpha}},
\end{align*}
again by~\eqref{algebra}. For $\alpha>d/p$, by~\cite[Theorems 5.1, 5.2, 5.3]{BPV-MA}, we can find a positive integer $k$ such that $L^p_{k} \hookrightarrow X^{p,q}_{\alpha}$, so that by~\eqref{Sobinteger}
\[
\begin{aligned}
\Big\|\Big(\sum_{m} \tilde \xi_{m} \Big)^{-1}\tilde \psi_{n}\Big\|_{X^{p,q}_{\alpha}}&\lesssim \Big\|\Big(\sum_{m} \tilde \xi_{m} \Big)^{-1}\tilde \psi_{n}\Big\|_{L^{p}_{k}}\\
&\lesssim \sum_{|I|\leq k} \Big\|X_I \Big(\Big(\sum_{m} \tilde \xi_{m} \Big)^{-1}\tilde \psi_{n} \Big) \Big\|_{p}\\
&\lesssim  \sum_{|I|+|J|\leq k} \Big\|X_I \Big(\Big(\sum_{m} \tilde \xi_{m} \Big)^{-1}\Big)X_J\big(\tilde \psi_{n} \big) \Big\|_{p} \lesssim 1,
\end{aligned}
\]
by the left-invariance of the vector fields. This concludes the proof of the first equivalence in~\eqref{equivsenzasup}. 

We now prove the second equivalence; by symmetry, it is enough to prove one of the two inequalities. For $n\in\N$, let $m_{1}^{n}, \dots, m_{M}^{n}$ be the indices such that $x_{m_{j}^{n}}\supp \xi$ intersects $x_{n}\supp \phi$. The number $M$ depends only on $\xi$ and $\phi$, but not on $n$, by the bounded overlap property. Then
\[
f \tilde \xi_n = \sum_{j=1}^{M} f \tilde \xi_{n} \phi_{m_{j}^{n}}, 
\]
whence, by using~\eqref{equivsenzasup} and the left-invariance of the norms,
\begin{align*}
\|f \xi_n\|_{X^{p,q}_{\alpha}}  \lesssim \| f\tilde \xi_{n}\|_{X^{p,q}_{\alpha}}
& = \Big\| \tilde \xi_{n} \sum_{j=1}^{M} f \phi_{m_{j}^{n}}\Big\|_{X^{p,q}_{\alpha}} \\
& \lesssim \|\tilde \xi_{n}\|_{X^{p,q}_{\alpha}}  \Big\| \sum_{j=1}^{M} f \phi_{m_{j}^{n}}\Big\|_{X^{p,q}_{\alpha}} \\
& \leq  \| \xi\|_{X^{p,q}_{\alpha}}  M \sup _{j=1, \dots, M } \| f \phi_{m_{j}^{n}}\|_{X^{p,q}_{\alpha}} \lesssim \sup _{m} \| f \phi_{m}\|_{X^{p,q}_{\alpha}},
\end{align*}
which completes the proof of (2).

To prove (3), observe that for $n\in \N$ there are $k_{j}^{n}$, $j=1, \dots, N$, such that
\begin{align*}
\|\xi_{n}X_{J}f\|_{X^{p,q}_{\alpha}}  = \Big\| \xi_{n}\sum_{j=1}^{N} X_{J}(f\xi_{k_{j}^{n}}) \Big\|_{X^{p,q}_{\alpha}}    \lesssim \sum_{j=1}^{N}\|  \xi_{n} X_{J}(f\xi_{k_{j}^{n}}) \|_{X^{p,q}_{\alpha}}
\end{align*}
and by~\eqref{algebra}
\[
\|  \xi_{n} X_{J}(f\xi_{k_{j}^{n}}) \|_{X^{p,q}_{\alpha}} \lesssim  \|  \tilde \xi_{n}\|_{X^{p,q}_{\alpha}} \|X_{J} (f\xi_{k_{j}^{n}}) \|_{X^{p,q}_{\alpha}} \lesssim \sup_{k} \|X_{J} (f\xi_{k}) \|_{X^{p,q}_{\alpha}} ,
\]
where we used that $ \| \xi_{n}\|_{X^{p,q}_{\alpha}} \lesssim  \| \tilde  \xi_{n}\|_{X^{p,q}_{\alpha}} = \|\xi\|_{X^{p,q}_{\alpha}}$. This completes the proof.
\end{proof}

We are now ready to show that a multiplier of $X^{p,q}_{\alpha}$, for $\alpha >d/p$, belongs to $X^{p,q}_{\alpha,\unif}  $.
\begin{proposition}\label{proponedir}
Suppose $\alpha>d/p$ and $p,q\in [1,\infty]$ if $X=B$ or $p,q\in (1,\infty)$ if $X=F$. Then $MX^{p,q}_\alpha \hookrightarrow X^{p,q}_{\alpha,\unif}  $.
\end{proposition}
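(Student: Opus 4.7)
The plan is to feed the cutoffs $\eta_n$ themselves into the definition of a multiplier. Since each $\eta_n$ is smooth and compactly supported, it belongs to $X^{p,q}_\alpha$, and the multiplier inequality applied to $g=\eta_n$ gives
\[
\|f\eta_n\|_{X^{p,q}_\alpha}\leq \|f\|_{MX^{p,q}_\alpha}\,\|\eta_n\|_{X^{p,q}_\alpha}\qquad\forall\,n\in\N.
\]
Taking the supremum over $n$, the proof reduces to establishing the uniform bound $\sup_{n}\|\eta_n\|_{X^{p,q}_\alpha}<\infty$.

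To obtain this bound I would follow the factorization already used in the proof of Lemma~\ref{lemmabasic}(2). Choose $\psi\in\sC$ with $\psi\equiv 1$ on $\supp\eta$, so that $\tilde\psi_n\equiv 1$ on $\supp\tilde\eta_n$, and write
\[
\eta_n = \tilde\eta_n\cdot \frac{\tilde\psi_n}{\sum_k\tilde\eta_k},
\]
where both factors are smooth and compactly supported. Since $\alpha>d/p$, the algebra property~\eqref{algebra} yields
\[
\|\eta_n\|_{X^{p,q}_\alpha}\lesssim \|\tilde\eta_n\|_{X^{p,q}_\alpha}\;\bigg\|\frac{\tilde\psi_n}{\sum_k\tilde\eta_k}\bigg\|_{X^{p,q}_\alpha}.
\]
The first factor equals $\|\eta\|_{X^{p,q}_\alpha}<\infty$ by the left-invariance of $\lambda$ and of the vector fields $X_j$. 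For the second factor, I would invoke the embedding $L^p_k\hookrightarrow X^{p,q}_\alpha$ for $k$ large enough (from~\cite[Theorems 5.1--5.3]{BPV-MA}) together with~\eqref{Sobinteger} (or \eqref{Sobinfty} when $p=\infty$), reducing matters to uniform $L^p$ bounds for the derivatives $X_I(\tilde\psi_n/\sum_k\tilde\eta_k)$ with $|I|\leq k$. Expanding by Leibniz, each such derivative is a product of derivatives of $\tilde\psi_n$, whose $L^p$ norms equal those of the corresponding derivatives of $\psi$ by left-invariance of $\lambda$ and of the $X_j$, times derivatives of $1/\sum_k\tilde\eta_k$.

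The only non-formal step is the verification that the derivatives of $1/\sum_k\tilde\eta_k$ are uniformly bounded in $L^\infty$. This follows because $\sum_k\tilde\eta_k\geq 1$ by \eqref{almostpartition} and, by the bounded overlap property of Lemma~\ref{covering}, the sum $\sum_k\tilde\eta_k$ is locally finite with a uniform bound on the number of nonzero summands at each point, so all its derivatives are bounded uniformly (derivatives of $\tilde\eta_k$ being translates of derivatives of $\eta$). Chaining these observations yields $\sup_n\|\eta_n\|_{X^{p,q}_\alpha}<\infty$, and the proposition follows; in fact, the precise estimate on $\|\tilde\psi_n/\sum_k\tilde\eta_k\|_{X^{p,q}_\alpha}$ is already contained in the proof of the converse inequality in Lemma~\ref{lemmabasic}(2), so one can appeal to it directly rather than redo the computation.
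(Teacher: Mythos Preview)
Your proof is correct and follows essentially the same approach as the paper: both feed $g=\eta_n$ into the multiplier inequality and then bound $\sup_n\|\eta_n\|_{X^{p,q}_\alpha}$ uniformly. The paper does this in one line by invoking the equivalence $\|\eta_n\|_{X^{p,q}_\alpha}\approx\|\tilde\eta_n\|_{X^{p,q}_\alpha}=\|\eta\|_{X^{p,q}_\alpha}$ from~\eqref{equivsenzasup} (Lemma~\ref{lemmabasic}(2) with $f=1$), which is exactly the shortcut you identify at the end of your argument after first unwinding its proof.
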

\begin{proof}
Pick $f\in MX^{p,q}_\alpha $ and observe that by~\eqref{equivsenzasup}
\begin{align*}
\sup_{n}\| f \eta_n\|_{X^{p,q}_\alpha } 
& \leq \sup_{n}\|f\|_{MX^{p,q}_\alpha} \| \eta_n\|_{X^{p,q}_\alpha}  \\
& \lesssim \|f\|_{MX^{p,q}_\alpha}  \sup_{n} \| \tilde \eta_n\|_{X^{p,q}_\alpha} =  \|f\|_{MX^{p,q}_\alpha} \| \eta \|_{X^{p,q}_\alpha},
\end{align*}
and the statement follows.
\end{proof}

\section{Multipliers of Triebel--Lizorkin spaces}\label{Sec:TL}
In this section, inspired by~\cite{Str-67}, we shall prove the following.
\begin{theorem}\label{teoTL}
Suppose $p,q\in (1,\infty)$ and $\alpha>d/p$. Then $MF^{p,q}_\alpha = F^{p,q}_{\alpha, \unif}$ with equivalences of norms.
\end{theorem}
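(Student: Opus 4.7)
Since Proposition~\ref{proponedir} already provides $MF^{p,q}_\alpha \hookrightarrow F^{p,q}_{\alpha,\unif}$, the task is to establish the reverse inclusion: every $f \in F^{p,q}_{\alpha,\unif}$ acts boundedly on $F^{p,q}_\alpha$ with operator norm controlled by $\|f\|_{F^{p,q}_{\alpha,\unif}}$. My plan is to localize the product $fg$ using the partition of unity $\{\eta_n\}$ from Lemma~\ref{covering} and Definition~\ref{C-def}, reducing the estimate to pieces of uniformly bounded diameter where the algebra property~\eqref{algebra} applies, and then to pass from these local estimates to a global one via a Triebel--Lizorkin analogue of Lemma~\ref{lemmabasic}(1).

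Concretely, I would fix an auxiliary cutoff $\phi \in \sC$ chosen so that $\phi \equiv 1$ on $\supp \eta$; then $\tilde\phi_n = \phi(x_n^{-1}\cdot)$ satisfies $\tilde\phi_n \equiv 1$ on $\supp \eta_n$, which yields the decomposition
$$fg \;=\; \sum_n (f\tilde\phi_n)(g \eta_n),$$
with each summand supported in $x_n B_M$ for some fixed $M$. For each $n$, the algebra property~\eqref{algebra} combined with Lemma~\ref{lemmabasic}(2) (which gives $\|f\tilde\phi_n\|_{F^{p,q}_\alpha} \approx \|f\phi_n\|_{F^{p,q}_\alpha}$ and the equivalence $\sup_n \|f\phi_n\|_{F^{p,q}_\alpha} \approx \sup_n \|f\eta_n\|_{F^{p,q}_\alpha}$) produces the local bound
$$\|(f\tilde\phi_n)(g\eta_n)\|_{F^{p,q}_\alpha} \;\lesssim\; \|f\tilde\phi_n\|_{F^{p,q}_\alpha}\,\|g\eta_n\|_{F^{p,q}_\alpha} \;\lesssim\; \|f\|_{F^{p,q}_{\alpha,\unif}}\,\|g\eta_n\|_{F^{p,q}_\alpha}.$$

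To glue these local bounds into the desired global estimate, the main step is a local-to-global norm equivalence for $F^{p,q}_\alpha$ of the form
$$\|h\|_{F^{p,q}_\alpha}^p \;\approx\; \sum_n \|h\eta_n\|_{F^{p,q}_\alpha}^p,$$
applied twice (once to $fg$ and once to $g$) to yield
$$\|fg\|_{F^{p,q}_\alpha}^p \;\approx\; \sum_n \|fg\eta_n\|_{F^{p,q}_\alpha}^p \;\lesssim\; \|f\|_{F^{p,q}_{\alpha,\unif}}^p \sum_n \|g\eta_n\|_{F^{p,q}_\alpha}^p \;\approx\; \|f\|_{F^{p,q}_{\alpha,\unif}}^p\,\|g\|_{F^{p,q}_\alpha}^p.$$
The hard part will be establishing this equivalence for the full range $\alpha > d/p$. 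For $\alpha \in (0,1)$ it should follow by pointwise localization of the functional $\mathcal{S}^{\mathrm{loc},q}_\alpha$ in~\eqref{TLequiv01}, since the finite difference $\oD_y f(x)$ for $|y|\le 1$ only sees a fixed-size neighborhood of $x$, so the bounded overlap property of $\{\supp \eta_n\}$ controls both the gluing and decomposition directions. For larger $\alpha$, I would iterate this argument via the heat-semigroup definition~\eqref{Fnorm}, exploiting the Gaussian off-diagonal decay~\eqref{heatabovebelow}--\eqref{heatestimate} to decouple contributions from well-separated tiles, or alternatively use a lifting/Leibniz reduction to the integer-order Sobolev case via~\eqref{Sobinteger}.
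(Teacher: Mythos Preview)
Your proposal is correct and follows essentially the same route as the paper. The local-to-global equivalence $\|h\|_{F^{p,q}_\alpha}^p \approx \sum_n \|h\xi_n\|_{F^{p,q}_\alpha}^p$ that you identify as the ``main step'' is exactly Proposition~\ref{propsecondirTL}, and the final assembly via the algebra property is the same as the paper's.

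Two minor remarks. First, the equivalence in Proposition~\ref{propsecondirTL} is in fact proved for \emph{all} $\alpha>0$, not just $\alpha>d/p$; the restriction $\alpha>d/p$ enters only through the algebra property~\eqref{algebra}. Second, for the extension to $\alpha>1$ the paper takes your second suggested route rather than the first: it uses the derivative-based recursive characterization $\|h\|_{F^{p,q}_\alpha}\approx\sum_{|I|\le k}\|X_I h\|_{F^{p,q}_{\alpha'}}$ with $\alpha=k+\alpha'$, $\alpha'\in(0,1)$ (this is~\eqref{recursiveTL}, from~\cite[Theorem 4.5]{BPV-MA}), together with Leibniz to reduce to the case $\alpha'\in(0,1)$ already handled, and then interpolation to cover integer $\alpha$. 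A direct argument via Gaussian off-diagonal decay of the heat kernel is not carried out.
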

We begin with the following proposition, which in particular provides an equivalent characterization of the Triebel--Lizorkin norm of a function by means of the localizing functions in $\sC$. 

\begin{proposition}\label{propsecondirTL}
Suppose $p,q\in (1,\infty)$ and $\alpha>0$, and let $\{\varphi_{(n)}\}$ be a sequence of smooth functions such that $\supp(\varphi_{(n)})\subseteq x_nB_m$ for some $m\in \N$, where $\{x_n \colon n\in \N\}=\cU$ is as in Lemma~\ref{covering}, and with all derivatives of order $\leq \alpha +1$ along $\bf X$ uniformly bounded. Then, for every $f\in F^{p,q}_\alpha$, 
\begin{equation}\label{sumbricks-from-above}
 \Big(\sum_{n\in\N} \| f
 \varphi_{(n)}\|^p_{F^{p,q}_\alpha}\Big)^{1/p} \lesssim \|f\|_{F^{p,q}_\alpha} .
\end{equation}
If $\xi\in\sC$, then
\begin{equation}\label{sumbricks}
\|f\|_{F^{p,q}_\alpha} \approx\Big(\sum_{n\in\N} \| f \xi_n\|^p_{F^{p,q}_\alpha}\Big)^{1/p}.
\end{equation}
\end{proposition}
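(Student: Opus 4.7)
The plan is to establish the nontrivial summability estimate \eqref{sumbricks-from-above} first and then derive \eqref{sumbricks} from it. The upper bound in \eqref{sumbricks} is immediate on applying \eqref{sumbricks-from-above} with $\varphi_{(n)}=\xi_n$, whose $\bX$-derivatives of every order are uniformly bounded (cf.\ \eqref{unifbound}). For the lower bound $\|f\|_{F^{p,q}_\alpha}^p\lesssim \sum_n\|f\xi_n\|_{F^{p,q}_\alpha}^p$, I would write $f=\sum_n f\xi_n$, use \eqref{maindec} for the $L^p$ component, and split the pointwise identity $(t\Ls)^m\mathrm e^{-t\Ls}f(x)=\sum_n(t\Ls)^m\mathrm e^{-t\Ls}(f\xi_n)(x)$ into a near part (finitely many $n$ contributing at each $x$, handled by bounded overlap together with the $\ell^p/\ell^q$ hierarchy on finite sets) and a tail controlled by the Gaussian decay \eqref{heatabovebelow}--\eqref{heatestimate}, which forces rapid decay in $d(x,x_n)^2/t$ and is summable in $n$.

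For \eqref{sumbricks-from-above} I would proceed by induction on $\lfloor\alpha\rfloor$. The $L^p$ component $\sum_n\|f\varphi_{(n)}\|_p^p\lesssim\|f\|_p^p$ is immediate from the bounded overlap of $\{x_nB_m\}_n$ (Lemma \ref{covering}(2)) together with the uniform bound $\|\varphi_{(n)}\|_\infty\lesssim 1$. For the base case $\alpha\in(0,1)$, I would exploit the finite-difference characterization \eqref{TLequiv01}, which is genuinely local: $\mathcal S^{\mathrm{loc},q}_\alpha g(x)$ only sees $g$ on $xB_1$. The Leibniz rule \eqref{leibnizD1} and Lemma \ref{lemmader}(2) (using the uniform bound on $X_j\varphi_{(n)}$) applied to $\oD_y(f\varphi_{(n)})$ give, after inserting into $\mathcal S^{\mathrm{loc},q}_\alpha$ and noting that the factor $\int_0^1 u^{q(1-\alpha)-1}\,du$ is finite precisely because $\alpha<1$, the pointwise bound
\[
\mathcal S^{\mathrm{loc},q}_\alpha(f\varphi_{(n)})(x)\lesssim \mathbf{1}_{x_nB_{m+1}}(x)\bigl(|f(x)|+\mathcal S^{\mathrm{loc},q}_\alpha f(x)\bigr),
\]
the indicator reflecting that $\oD_y(f\varphi_{(n)})(x)=0$ unless $x\in x_nB_{m+1}$ when $|y|<1$. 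Taking $L^p$ and summing in $n$ via bounded overlap closes the base case.

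For the inductive step $\alpha=k+\beta$ with $k\in\N$ and $\beta\in(0,1)$, I would invoke a derivative-based equivalence of the form $\|g\|_{F^{p,q}_\alpha}^p\asymp\|g\|_p^p+\sum_{|J|=k}\|X_Jg\|_{F^{p,q}_\beta}^p$, available in this setting via the lifting properties of $\Ls^{1/2}$ and of the vector fields $X_j$. Leibniz then gives $X_J(f\varphi_{(n)})=\sum_{J_1+J_2=J}c_{J_1,J_2}(X_{J_1}f)(X_{J_2}\varphi_{(n)})$, and because $|J_2|\leq k\leq\alpha+1$, each family $\{X_{J_2}\varphi_{(n)}\}_n$ still satisfies the hypotheses of the proposition. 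Applying the inductive hypothesis at regularity $\beta$ to the pair $(X_{J_1}f,X_{J_2}\varphi_{(n)})$ and then summing over $J_1,J_2$ yields the claim. The integer case is treated analogously.

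The main obstacle I expect is the inductive step: the cited finite-difference characterization from \cite{BPV-GAHA} covers only $\alpha\in(0,1)$, so the derivative-based equivalence at higher smoothness must be set up with care. A fallback, avoiding the derivative characterization altogether, is to argue directly with the heat semigroup by splitting the base point $x$ into near ($x\in x_nB_{m+2}$) and far ($x\notin x_nB_{m+2}$) regions relative to $\supp\varphi_{(n)}$: the near contribution is controlled by commutator estimates refining Lemma \ref{lemmader}(3), which produce factors of $\sqrt t$ that integrate against $t^{-\alpha/2}\,dt/t$ once $m$ is chosen large enough, while the far contribution is handled by \eqref{heatabovebelow}--\eqref{heatestimate}, giving rapid decay in $d(x,x_n)$ that is readily summed over $n$. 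Either route works, but the bookkeeping becomes more substantial.
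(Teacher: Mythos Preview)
Your argument for the upper bound \eqref{sumbricks-from-above} is essentially the paper's: the base case $\alpha\in(0,1)$ via \eqref{TLequiv01} and Leibniz, then the non-integer case $\alpha=k+\beta$ via the derivative equivalence $\|g\|_{F^{p,q}_\alpha}\asymp\sum_{|I|\le k}\|X_Ig\|_{F^{p,q}_\beta}$ (this is \cite[Theorem 4.5]{BPV-MA}, quoted in the paper as \eqref{recursiveTL}) together with Leibniz on $X_I(f\varphi_{(n)})$. Your pointwise bound is in fact cleaner than the paper's, since you apply \eqref{leibnizD1} with the roles of $f$ and $\varphi_{(n)}$ swapped relative to the paper, obtaining $|f(x)|$ directly rather than the local average $|f|*g_u(x)$ that forces the paper through Young's inequality. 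One point you skate over: ``the integer case is treated analogously'' is not quite right, because \eqref{TLequiv01} is only available for $\beta\in(0,1)$. The paper closes the integer case by complex interpolation, identifying $(\ell^p(F^{p,q}_{k/2}),\ell^p(F^{p,q}_{3k/2}))_{[1/2]}$ both as $\ell^p(F^{p,q}_k)$ and as $F^{p,q}_k$.

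Where you genuinely diverge from the paper is the lower bound in \eqref{sumbricks}. Your plan works via the heat semigroup with a near/far decomposition and Gaussian tail estimates, which is correct but laborious. The paper avoids this entirely by staying with the finite-difference functional: since $\mathcal S^{\mathrm{loc},q}_\alpha(f\xi_n)$ is \emph{genuinely} supported in $x_nB_{m+2}$ (the functional only probes $B_1$-neighbourhoods), the subadditivity $\mathcal S^{\mathrm{loc},q}_\alpha f\le\sum_n\mathcal S^{\mathrm{loc},q}_\alpha(f\xi_n)$ together with bounded overlap gives $\|\mathcal S^{\mathrm{loc},q}_\alpha f\|_p^p\lesssim\sum_n\|\mathcal S^{\mathrm{loc},q}_\alpha(f\xi_n)\|_p^p$ immediately, with no tail to control. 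The derivative equivalence then lifts this to non-integer $\alpha>1$, and interpolation again handles integers. Your heat-semigroup route buys nothing extra here and costs you the off-diagonal analysis; the locality of $\mathcal S^{\mathrm{loc},q}_\alpha$ is precisely what makes the paper's argument short.
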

\begin{proof}
Assume first that $\alpha \in (0,1)$ and pick $f\in F^{p,q}_\alpha$. By~\eqref{leibnizD1}
\begin{align*}
\mathcal{S}^{{\rm loc}, q}_{\alpha}(\varphi_{(n)} f)(x) 
  & \leq \bigg(\int_0^1\bigg[ \frac{1}{u^{\alpha} V(u)}\int_{|y|<u}|\oD_{y}f(x)| |\varphi_{(n)}(x)| \, \dd\lambda(y) \bigg]^q\,
  \frac{d u}{u}\bigg)^{1/q}  \\
  &\quad + \bigg(\int_0^1\bigg[ \frac{1}{u^{\alpha}
      V(u)}\int_{|y|<u}|f(xy^{-1})\oD_{y}\varphi_{(n)}(x)| \, \dd\lambda(y) \bigg]^q\,
  \frac{d u}{u}\bigg)^{1/q}\\
  & = I_n(x) + J_n(x).
\end{align*}
On the one hand,
\begin{align*}
\sum_{n} \|I_{n}\|_{p}^{p} = \int_{G}\sum_{n}   |I_n|^p \, \dd \lambda \lesssim \bigg(\sup_{x\in G} \sum_{n}   |\varphi_{(n)}(x)| \bigg)^{p} \| \mathcal{S}^{{\rm loc}, q}_{\alpha}( f)\|^p_{p}  \lesssim \| \mathcal{S}^{{\rm loc}, q}_{\alpha}( f)\|^p_{p}.
\end{align*}
On the other hand, since $\supp \oD_{y}\varphi_{(n)} \subseteq x_{n}B_{m+1}$ when $|y|\leq 1$, one has $J_n = J_{n }\mathbf{1}_{x_{n}B_{m+1}}$. By Lemma~\ref{lemmader}~(2) applied to $\varphi_{(n)}$ with $p=\infty$,
\begin{align*}
J_{n}(x) &\lesssim \mathbf{1}_{x_{n}B_{m+1}}(x) \bigg(\int_0^1 \bigg[ \frac{1}{u^{\alpha}  V(u)}\int_{|y|<u}|f(xy^{-1})|y| \, \dd\lambda(y) \bigg]^q\, \frac{\dd u}{u} \bigg)^{1/q}\\
& \lesssim \bigg(\int_0^1 \bigg[ \frac{u}{u^{\alpha}}|f\mathbf{1}_{x_{n}B_{m+2}}|*g_{u}(x) \bigg]^q\, \frac{\dd u}{u} \bigg)^{1/q}\\
& \lesssim \sup_{u\in (0,1)} |f \mathbf{1}_{x_{n}B_{m+2}}|*g_{u}(x),
\end{align*}
where $g_{u}(y) = \frac{1}{V(u)}\mathbf{1}_{B_u}(y)$. Then, by Young's inequality (cf.~\cite[(2.2)]{BPV-NA}),
\[
\|J_{n}\|_{p}^{p} \lesssim \| f\mathbf{1}_{x_{n}B_{m+2}}\|_{p}^{p}  \sup_{u\in (0,1)} \|g_{u}\|_{1}^{p} \leq \| f\mathbf{1}_{x_{n}B_{m+2}}\|_{p}^{p} .
\]
Thus,
\begin{align*}
\sum_{n}  \|J_{n}\|_{p}^{p}  \lesssim  \|f\|_{p}^p.
\end{align*}
Therefore,  using~\eqref{TLequiv01}, the bounded overlap property and the uniform bound on $\|\varphi_{(n)}\|_\infty$, 
\begin{align*}
\Big(\sum_{n} \| f \varphi_{(n)}\|^p_{F^{p,q}_\alpha}\Big)^{1/p}
& \lesssim \Big(\sum_{n} \| f \varphi_{(n)}\|^p_p\Big)^{1/p} + \Big(\sum_{n} \| \mathcal{S}^{{\rm loc}, q}_{\alpha}(\varphi_{(n)} f)\|^p_p\Big)^{1/p}\notag\\
& \lesssim \|f\|_p + \| \mathcal{S}^{{\rm loc}, q}_{\alpha}(f)\|_p \lesssim \|f\|_{F^{p,q}_\alpha}.
\end{align*}
This shows the inequality~\eqref{sumbricks-from-above} in the case
$\alpha\in(0,1)$.
If $\alpha = k+ \alpha'$ with $k\in \N$ and $\alpha'\in (0,1)$, then by~\cite[Theorem 4.5]{BPV-MA}
\begin{equation}\label{recursiveTL}
\|f\|_{F^{p,q}_\alpha} \approx \sum_{|I|\leq k} \| X_I f\|_{F^{p,q}_{\alpha'}}, \qquad \|f\varphi_{(n)}\|_{F^{p,q}_\alpha} \approx \sum_{|I|\leq k} \| X_I (f\varphi_{(n)})\|_{F^{p,q}_{\alpha'}}.
\end{equation}
Since
\[
 X_I (f \varphi_{(n)})  =  \sum_{|J|\leq |I|, |L| \leq |I|-|J|}  c_{J,L}(X_J f) (X_L \varphi_{(n)}),
\]
 arguing as in the case $\alpha\in(0,1)$,  with $X_J f$ in place of  $f$ and with $X_L \varphi_{(n)}$ in place of $\varphi_{(n)}$, we obtain  
\[
\Big( \sum_{n} \| (X_J f) (X_L\varphi_{(n)}) \|_{F^{p,q}_{\alpha'}}^p\Big)^{1/p} \lesssim  \|X_J f\|_{F^{p,q}_{\alpha'}} \lesssim \|f\|_{F^{p,q}_{\alpha}} ,
\]
whence
\[
\Big(\sum_{n} \| f\varphi_{(n)}\|_{F^{p,q}_{\alpha}}^p \Big)^{1/p} \lesssim \|f\|_{F^{p,q}_\alpha}.
\]
Thus,  inequality~\eqref{sumbricks-from-above} follows for all $\alpha>0$ which are not integers.
The integer case follows from interpolation. Indeed, if  $k\in\N$, then by~\cite[\S 5.6]{BerghLofstrom} and~\cite[Theorem 6.1]{BPV-MA}, 
\[
(\ell^{p}(F^{p,q}_{k/2}), \ell^{p}(F^{p,q}_{3k/2}))_{[1/2]} = \ell^{p}((F^{p,q}_{k/2}, F^{p,q}_{3k/2})_{[1/2]}) = \ell^{p} (F^{p,q}_k),
\]
but also, by what shown above,
\[
(\ell^{p}(F^{p,q}_{k/2}), \ell^{p}(F^{p,q}_{3k/2}))_{[1/2]} = (F^{p,q}_{k/2}, F^{p,q}_{3k/2})_{[1/2]} =F^{p,q}_k,
\]
all with equivalences of norms.  This proves~\eqref{sumbricks-from-above}.

 In order to prove~\eqref{sumbricks} we only need to prove the reverse inequality, assuming that $\xi\in\sC$ and $\xi_n$ is as in Definition~\ref{C-def}.  We have  that
\begin{align*}
\mathcal{S}^{{\rm loc}, q}_{\alpha}( f) = \mathcal{S}^{{\rm loc}, q}_{\alpha}\Big( \sum_{n}f\xi_{n}\Big)
& \leq \sum_{n} \mathcal{S}^{{\rm loc}, q}_{\alpha}( f\xi_{n}).
    \end{align*}
We observe that, if $\{g_{(n)}\}$ is a sequence of nonnegative functions such that $\supp g_{(n)}\subseteq x_nB_m$ for some $m\in\N$, with $\{x_n\}$ as in Lemma~\ref{covering}, then
\begin{equation}  \label{ci-serve-no?}  
\Big\|\sum_{n} g_{(n)}\Big\|_p \approx \bigg( \sum_{n} \|g_{(n)}\|_p^p \bigg)^{1/p}
\end{equation}
Thus, if the right hand side of~\eqref{sumbricks} is finite, since $\supp \mathcal{S}^{{\rm loc}, q}_{\alpha}( f\xi_n) \subseteq x_{n}B_{m+2}$,  for $\alpha \in (0,1)$ we have
\begin{align*}
\|f\|_p +\|\mathcal{S}^{{\rm loc}, q}_{\alpha}( f)\|_p 
& \lesssim  \bigg(\sum_{n}\|f\xi_{n}\|_{p}^{p}\bigg)^{1/p} + \Big(\sum_{n} \|\mathcal{S}^{{\rm loc}, q}_{\alpha}( f\xi_n)\|^p_p\Big)^{1/p},
\end{align*}
which gives the desired conclusion when $\alpha \in (0,1)$. To conclude, suppose now $\alpha>0$ is noninteger and $\alpha = k+ \alpha'$ with $\alpha'\in (0,1)$ and $k\in \N$. Then, again by the norm equivalence~\eqref{recursiveTL}, we observe that
\[
\mathcal{S}^{{\rm loc}, q}_{\alpha '}(X_{I} f) \leq \sum_{n} \mathcal{S}^{{\rm loc}, q}_{\alpha'}(X_{I} (f\xi_{n}))
\]
and that $\supp \mathcal{S}^{{\rm loc}, q}_{\alpha}( X_{I}(f\xi_n))
\subseteq x_{n}B_{m+2}$, so that for $|I|\leq k$, arguing as in~\eqref{ci-serve-no?}, 
\begin{align*}
\| X_I f\| _{F^{p,q}_{\alpha'}}
 & \approx \| X_I f\|_{p} +  \| \mathcal{S}^{{\rm loc}, q}_{\alpha '}(X_{I} f) \|_{p}\\
 & \lesssim  \Big(\sum_{n} \|X_{I}(f\xi_{n})\|_p^{p}\Big)^{1/p} +\Big(\sum_{n} \|\mathcal{S}^{{\rm loc}, q}_{\alpha'}X_{I}( f\xi_n)\|^p_p\Big)^{1/p} \\
 &\lesssim  \Big(\sum_{n}  \|X_{I}(f\xi_{n})\|_{F^{p,q}_{\alpha'}}^{p}\Big)^{1/p} \lesssim \Big(\sum_{n}   \|f\xi_{n}\|_{F^{p,q}_{\alpha}}^{p}\Big)^{1/p}.
 \end{align*}
 Thus, the statement follows for all $\alpha>0$ which are not
 integers. The integer case follows from interpolation as before.
\end{proof}

\begin{corollary}\label{corsecondirTL}
Suppose that $p,q\in (1,\infty)$, $\alpha>0$, and $\xi \in\sC$. Then
\begin{equation}\label{supfetan}
\|f \|_{MF^{p,q}_\alpha} \approx \sup_{n\in \N}\|f\xi_n\|_{MF^{p,q}_\alpha}.
\end{equation}
\end{corollary}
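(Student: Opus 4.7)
The plan is to derive the corollary as a direct consequence of Proposition~\ref{propsecondirTL} combined with the multiplier inequality applied locally. The key pointwise identity will be $fg\xi_n = (f\xi_n)(g\tilde\psi_n)$, where $\tilde\psi_n$ is an auxiliary cut-off chosen so as to equal $1$ on $\supp\xi_n$.

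For the easy direction $\sup_n \|f\xi_n\|_{MF^{p,q}_\alpha} \lesssim \|f\|_{MF^{p,q}_\alpha}$, I would pick any $g\in F^{p,q}_\alpha$ and rewrite $(f\xi_n)g = f(\xi_n g)$, so that the definition of the multiplier norm gives $\|(f\xi_n)g\|_{F^{p,q}_\alpha} \leq \|f\|_{MF^{p,q}_\alpha}\|\xi_n g\|_{F^{p,q}_\alpha}$. Since $\{\xi_n\}$ satisfies the support and uniform-derivative hypotheses of Proposition~\ref{propsecondirTL} (by \eqref{unifbound}), applying~\eqref{sumbricks-from-above} with $g$ in place of $f$ and with $\varphi_{(n)}=\xi_n$ yields in particular $\|\xi_n g\|_{F^{p,q}_\alpha} \lesssim \|g\|_{F^{p,q}_\alpha}$ uniformly in $n$. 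Taking the supremum over $g$ of unit norm and then over $n$ completes this inequality.

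For the converse direction, I would fix an auxiliary $\psi\in\sC$ with $\psi\equiv 1$ on $\supp\xi$, so that the left translates $\tilde\psi_n(\cdot)=\psi(x_n^{-1}\,\cdot)$ satisfy $\tilde\psi_n\equiv 1$ on $\supp\xi_n$ for every $n$. Then for any $g\in F^{p,q}_\alpha$ the pointwise identity $fg\xi_n=(f\xi_n)(g\tilde\psi_n)$ combined with the multiplier inequality for $f\xi_n$ gives
\[
\|fg\xi_n\|_{F^{p,q}_\alpha}\leq \|f\xi_n\|_{MF^{p,q}_\alpha}\|g\tilde\psi_n\|_{F^{p,q}_\alpha}\leq \Bigl(\sup_m \|f\xi_m\|_{MF^{p,q}_\alpha}\Bigr)\|g\tilde\psi_n\|_{F^{p,q}_\alpha}.
\]
Raising to the $p$-th power, summing in $n$, and invoking the equivalence~\eqref{sumbricks} on the left-hand side and~\eqref{sumbricks-from-above} on the right-hand side (applied to the sequence $\{\tilde\psi_n\}$, which consists of left translates of a single function and therefore automatically has the required support and uniformly bounded derivatives) I would obtain
\[
\|fg\|_{F^{p,q}_\alpha}^p \;\lesssim\; \sum_n\|fg\xi_n\|_{F^{p,q}_\alpha}^p \;\lesssim\; \Bigl(\sup_m \|f\xi_m\|_{MF^{p,q}_\alpha}\Bigr)^p\|g\|_{F^{p,q}_\alpha}^p,
\]
and taking the supremum over unit-norm $g$ yields the desired bound on $\|f\|_{MF^{p,q}_\alpha}$.

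I do not foresee any genuine obstacle; the only delicate point is the choice of the enlarged cut-off $\tilde\psi_n$ equal to $1$ on $\supp\xi_n$, which is precisely what allows the multiplier inequality for $f\xi_n$ to be applied after splitting $g$ into the localized pieces $g\tilde\psi_n$. Everything else is bookkeeping around Proposition~\ref{propsecondirTL} and the left invariance of the norms and vector fields.
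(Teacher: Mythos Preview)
Your proof is correct and follows essentially the same approach as the paper: both directions rely on Proposition~\ref{propsecondirTL}, and for the harder direction you use exactly the same auxiliary cut-off $\psi\in\sC$ with $\psi\equiv 1$ on $\supp\xi$ and the identity $fg\xi_n=(f\xi_n)(g\tilde\psi_n)$, then sum in $\ell^p$ and apply~\eqref{sumbricks} and~\eqref{sumbricks-from-above}. The only cosmetic difference is that for the easy direction the paper invokes the submultiplicativity $\|f\xi_n\|_{MF^{p,q}_\alpha}\leq\|f\|_{MF^{p,q}_\alpha}\|\xi_n\|_{MF^{p,q}_\alpha}$ directly, while you unpack this by testing against an arbitrary $g$; these are equivalent.
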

\begin{proof}
On the one hand, by Proposition~\ref{propsecondirTL}, for $g\in F^{p,q}_{\alpha}$ one has 
\[
\|\xi_{n}g\|_{F^{p,q}_{\alpha}} \lesssim \|g\|_{F^{p,q}_{\alpha}}, \qquad n\in\N,
\]
whence $\|\xi_n\|_{MF^{p,q}_\alpha} \lesssim 1$ uniformly for $n\in \N$. Then,
\[
\|f\xi_n\|_{MF^{p,q}_\alpha} \leq \|f \|_{MF^{p,q}_\alpha}\|\xi_n\|_{MF^{p,q}_\alpha} \lesssim \|f\|_{MF^{p,q}_\alpha},
\]
which is a bound independent of $n$, from which the inequality
$\gtrsim$.

Conversely, let $\psi\in\sC$ be such that $\psi=1$ on $\supp\xi$, so that $\xi_n=\xi_n\widetilde\psi_n$. Now, if the right hand side of~\eqref{supfetan} is finite,  and $g\in F^{p,q}_\alpha$,   by Proposition~\ref{propsecondirTL} we have
\begin{align*}
\| fg\|_{F^{p,q}_\alpha} 
&\lesssim  \Big( \sum_{n} \| f\xi_n g\|_{F^{p,q}_{\alpha}}^p\Big)^{1/p}\\
&= \Big( \sum_{n} \| f\xi_n\widetilde\psi_n g\|_{F^{p,q}_{\alpha}}^p\Big)^{1/p}\\
  & \leq \sup_{n}\|f\xi_n\|_{MF^{p,q}_\alpha}  \Big( \sum_{n} \| \widetilde\psi_n g\|_{F^{p,q}_{\alpha}}^p\Big)^{1/p}  \lesssim  \sup_{n}\|f\xi_n\|_{MF^{p,q}_\alpha}   \|g\|_{F^{p,q}_\alpha}
\end{align*}
whence the inequality $\lesssim $ in~\eqref{supfetan}.
\end{proof}

\begin{proof}[Proof of Theorem~\ref{teoTL}]
If $f\in MF^{p,q}_\alpha$, then $f$ is uniformly locally in
$F^{p,q}_\alpha$ by Proposition~\ref{proponedir}. Viceversa, assume that $f$ is uniformly locally in $F^{p,q}_\alpha$. Let $\psi\in\sC$ be such that $\psi=1$ on $\supp\eta$, so that $\eta_n=\eta_n\widetilde\psi_n$. Then for $g\in F^{p,q}_\alpha$, by the algebra property of $F^{p,q}_\alpha$ (recall~\eqref{algebra}) and Proposition~\ref{propsecondirTL} 
\begin{align*}
\|fg\|_{F^{p,q}_\alpha} &\lesssim \bigg(\sum_{n} \|f\eta_n\widetilde\psi_n g\|_{F^{p,q}_\alpha}^p \bigg)^{1/p}\\
 & \lesssim \sup_n \|f\eta_n\|_{F^{p,q}_\alpha}
 \bigg( \sum_{n} \| \widetilde\psi_n g\|_{F^{p,q}_\alpha}^p \bigg)^{1/p}  \lesssim \bigg(\sum_{n} \| \widetilde\psi_n  g\|_{F^{p,q}_\alpha}^p \bigg)^{1/p}  \lesssim \| g\|_{F^{p,q}_\alpha} ,
\end{align*} 
and the theorem is proved.
\end{proof}

\section{Multipliers for Besov spaces}\label{Sec:Besov}
In this section we deal with the pointwise multipliers for the spaces $B^{p,q}_{\alpha}$. On the one hand, as already explained, the Besov case is intrinsically different (and more difficult) than the Triebel--Lizorkin case, and the pointwise multipliers differ depending on whether $q\geq p$ or $q<p$. On the other hand, because of the lack of a suitable notion of high-order differences in the full generality of a Lie group as in the previous sections, we are unable to get a characterization of $MB^{p,q}_\alpha$ for all indices $p,q\in [1,\infty]$ and $\alpha>d/p$ (which will instead be obtained in Section~\ref{Sec:stratified} below, when $G$ is stratified). We shall slightly restrict the ranges of $p,q$ and $\alpha$ involved, and get the following.
\begin{theorem}\label{teoB}
Suppose $p,q\in [1,\infty]$, $d<p\leq q$ and $\alpha\in (d/p, 1)+\N$. Then $MB^{p,q}_\alpha = B^{p,q}_{\alpha,\unif}$, with equivalence of norms.
\end{theorem}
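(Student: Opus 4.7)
Since the inclusion $MB^{p,q}_\alpha \hookrightarrow B^{p,q}_{\alpha,\unif}$ is already given by Proposition~\ref{proponedir}, the plan is to prove the reverse inclusion, namely that every $f\in B^{p,q}_{\alpha,\unif}$ acts as a bounded pointwise multiplier of $B^{p,q}_\alpha$. I would handle first the base case $\alpha\in (d/p,1)$ via the finite-difference characterization~\eqref{Besovequiv01}, and then extend to $\alpha \in (d/p,1)+\N$ via the derivative characterization of the Besov norm analogous to~\eqref{recursiveTL}. Since $\alpha>d/p$, the (local) Sobolev embedding $B^{p,q}_\alpha\hookrightarrow L^\infty$ gives $\|f\|_\infty \lesssim \|f\|_{B^{p,q}_{\alpha,\unif}}$, which at once yields $\|fg\|_p\leq \|f\|_\infty\|g\|_p \lesssim \|f\|_{B^{p,q}_{\alpha,\unif}}\|g\|_{B^{p,q}_\alpha}$, so only $\mathcal{A}^{p,q}_\alpha(fg)$ needs attention.

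For $\alpha\in(d/p,1)$, applying~\eqref{leibnizD1} splits $D_y(fg) = fD_yg + g(\cdot y^{-1})D_y f$; the first summand contributes at most $\|f\|_\infty\, \mathcal{A}^{p,q}_\alpha(g)$, so the work concentrates on the second. I would localize $f$ by picking $\eta''\in\sC$ with $\eta''\equiv 1$ on a ball large enough that $\tilde\eta''_n\equiv 1$ on the $1$-neighborhood of $\supp\xi_n$. Then for $x\in\supp\xi_n$ and $|y|\leq 1$ both $\tilde\eta''_n(x)$ and $\tilde\eta''_n(xy^{-1})$ equal $1$, so the identity $\sum_n\xi_n=1$ produces the pointwise decomposition $D_y f(x) = \sum_n \xi_n(x)\,D_y(f\tilde\eta''_n)(x)$. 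Exploiting the bounded overlap of the $\xi_n$'s together with the equality $g(xy^{-1}) = (g\tilde\eta''_n)(xy^{-1})$ on $\supp\xi_n$, I obtain
\[
\|g(\cdot y^{-1})D_yf\|_p^p \lesssim \sum_n \|g\tilde\eta''_n\|_\infty^p\,\|D_y(f\tilde\eta''_n)\|_p^p.
\]
Integrating against $d\lambda(y)/(|y|^{\alpha q}V(|y|))$ and using Minkowski's integral inequality (valid since $q\geq p$) to pass the $\ell^p$-norm in $n$ outside the $L^q$-norm in $y$, and using $\sup_n \mathcal{A}^{p,q}_\alpha(f\tilde\eta''_n) \lesssim \|f\|_{B^{p,q}_{\alpha,\unif}}$, I get
\[
\Big(\int \big(\|g(\cdot y^{-1})D_yf\|_p/|y|^\alpha\big)^q\frac{d\lambda(y)}{V(|y|)}\Big)^{1/q}\lesssim \|f\|_{B^{p,q}_{\alpha,\unif}}\Big(\sum_n \|g\tilde\eta''_n\|_\infty^p\Big)^{1/p}.
\]

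The last and main obstacle is then to show $\big(\sum_n \|g\tilde\eta''_n\|_\infty^p\big)^{1/p}\lesssim \|g\|_{B^{p,q}_\alpha}$. The naive approach of estimating each $\|g\tilde\eta''_n\|_\infty$ by $\|g\tilde\eta''_n\|_{B^{p,q}_\alpha}$ and summing fails, because an $\ell^p$-discretization of the Besov norm is false when $q>p$. To circumvent this I would pass through the Sobolev scale, combining (i) the local Sobolev embedding $\|h\|_\infty \lesssim \|h\|_{L^p_{\alpha'}}$ for $h$ compactly supported in a fixed ball and any $\alpha'>d/p$; (ii) Proposition~\ref{propsecondirTL} applied to $L^p_{\alpha'} = F^{p,2}_{\alpha'}$, which gives $\sum_n\|g\tilde\eta''_n\|_{L^p_{\alpha'}}^p\lesssim \|g\|_{L^p_{\alpha'}}^p$; and (iii) the embedding $B^{p,q}_\alpha\hookrightarrow L^p_{\alpha'}$ for any $\alpha'\in (d/p,\alpha)$, which follows from the trivial loss-of-smoothness inclusion $B^{p,q}_\alpha\hookrightarrow B^{p,1}_{\alpha'}$ followed by the standard embedding $B^{p,1}_{\alpha'}\hookrightarrow F^{p,2}_{\alpha'} = L^p_{\alpha'}$. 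This closes the base case.

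Finally, for $\alpha = k+\alpha'$ with $k\in\N$ and $\alpha'\in(d/p,1)$, the derivative characterization of the Besov norm gives $\|fg\|_{B^{p,q}_\alpha}\approx \sum_{|I|\leq k}\|X_I(fg)\|_{B^{p,q}_{\alpha'}}$. Expanding $X_I(fg)$ by the noncommutative Leibniz rule into a finite linear combination of products $X_Jf\cdot X_Lg$ with $|J|+|L|\leq |I|$, the base case applied to each such product yields $\|X_Jf\cdot X_Lg\|_{B^{p,q}_{\alpha'}}\lesssim \|X_Jf\|_{B^{p,q}_{\alpha',\unif}}\|X_Lg\|_{B^{p,q}_{\alpha'}}$, while a direct consequence of Lemma~\ref{lemmabasic}~(3) together with the derivative characterization gives $\|X_Jf\|_{B^{p,q}_{\alpha',\unif}}\lesssim \|f\|_{B^{p,q}_{\alpha,\unif}}$. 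Summing in $I$, $J$, $L$ produces the desired multiplier bound.
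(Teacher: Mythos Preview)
Your argument is correct and takes a somewhat different route from the paper's own proof (Proposition~\ref{prop_embed2}). There are two main differences. First, you apply the Leibniz rule~\eqref{leibnizD1} to $\oD_y(fg)$ \emph{before} localizing, so the term $f\,\oD_y g$ is dispatched trivially by $\|f\|_\infty\,\As^{p,q}_\alpha(g)$; the paper instead localizes $fg=\sum_n(\eta_n f)(\tilde\xi_n g)$ first and then applies Leibniz to each piece, which forces a nontrivial estimate of the corresponding term $\sigma_0$ via the Calder\'on reproducing formula~\eqref{CRF} and heat-kernel bounds. Second, for the crucial $\ell^p$-summability estimate $\big(\sum_n\|g\tilde\eta''_n\|_\infty^p\big)^{1/p}\lesssim\|g\|_{B^{p,q}_\alpha}$, you route through the Sobolev scale $L^p_{\alpha'}=F^{p,2}_{\alpha'}$ and invoke Proposition~\ref{propsecondirTL}, whereas the paper routes through the diagonal Besov space $B^{p,p}_{\alpha'-\varepsilon}$ via Corollary~\ref{corppp} and Remark~\ref{Remxi} (themselves consequences of the longer Proposition~\ref{proppqr}). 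Your approach is more economical for the first term but borrows the Triebel--Lizorkin localization machinery and, strictly speaking, needs $p<\infty$ there (the endpoint $p=q=\infty$ being trivial); the paper's approach stays entirely within the Besov framework and handles $p=\infty$ uniformly. The lifting to $\alpha\in(d/p,1)+\N$ via the derivative characterization~\eqref{recursiveBesov} and Lemma~\ref{lemmabasic}~(3) matches the paper.
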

The case when $p=q=1$ and $\alpha=d$ is somewhat special, as $B^{1,1}_{d} = F^{1,1}_{d}$ is also a Triebel--Lizorkin space and it is also an algebra (cf.~\cite[Corollary 7.2]{BPV-MA}). Thus, it can be treated as in the previous section. We have the following.
\begin{theorem}\label{teoB11}
$MB^{1,1}_{d} =  B^{1,1}_{d,\unif}$ with equivalence of norms.
\end{theorem}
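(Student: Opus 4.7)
The plan is to follow the two-step strategy of Theorem~\ref{teoTL}, exploiting the algebra property of $B^{1,1}_d=F^{1,1}_d$ from~\cite[Corollary 7.2]{BPV-MA}. The forward inclusion $MB^{1,1}_d \hookrightarrow B^{1,1}_{d,\unif}$ comes for free: the proof of Proposition~\ref{proponedir} applies verbatim, since its only ingredient is the norm equivalence~\eqref{equivsenzasup}, which by Lemma~\ref{lemmabasic}(2) is valid precisely in the endpoint regime $X=B$, $p\in[1,\infty]$, $q=1$, $\alpha=d/p$.

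For the reverse inclusion $B^{1,1}_{d,\unif}\hookrightarrow MB^{1,1}_d$, given $f\in B^{1,1}_{d,\unif}$ and $g\in B^{1,1}_d$, I pick $\psi\in\sC$ with $\psi=1$ on $\supp\eta$ so that $\eta_n=\eta_n\widetilde\psi_n$, and decompose $fg=\sum_n(f\eta_n)(\widetilde\psi_n g)$. Applying the algebra inequality~\eqref{algebra} at $(p,q,\alpha)=(1,1,d)$ summand by summand gives
\begin{equation*}
\|fg\|_{B^{1,1}_d} \leq \sum_n \|(f\eta_n)(\widetilde\psi_n g)\|_{B^{1,1}_d} \lesssim \sup_n \|f\eta_n\|_{B^{1,1}_d}\cdot \sum_n \|\widetilde\psi_n g\|_{B^{1,1}_d},
\end{equation*}
so the theorem reduces to the sum-of-bricks estimate
\begin{equation*}
\sum_n \|\widetilde\psi_n g\|_{B^{1,1}_d} \lesssim \|g\|_{B^{1,1}_d}. \quad (\star)
\end{equation*}

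The main obstacle is establishing $(\star)$: the proof of Proposition~\ref{propsecondirTL} hinges on the equivalent norm~\eqref{TLequiv01} in terms of $\mathcal{S}^{{\rm loc},q}_\alpha$, which is unavailable at $p=q=1$. To bypass this I use the Besov functional $\mathcal{A}^{1,1}_\alpha$ of~\eqref{Besovequiv01}, valid at $p=q=1$ for every $\alpha\in(0,1)$, and mimic the three-tier structure of Proposition~\ref{propsecondirTL}. For the base case $\alpha\in(0,1)$ the Leibniz rule~\eqref{leibnizD1}, Lemma~\ref{lemmader}(2) and~\eqref{unifbound} yield
\begin{equation*}
\sum_n \|\oD_y(g\widetilde\psi_n)\|_1 \lesssim |y|\,\|g\|_1 + \|\oD_y g\|_1
\end{equation*}
via the bounded overlap of the supports $x_n B_{M+1}$, and integration against $|y|^{-\alpha}\,\dd\lambda(y)/V(|y|)$ over $|y|\leq 1$ is finite precisely because $\alpha<1$. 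For non-integer $\alpha>1$, the derivative characterization~\cite[Theorem 4.5]{BPV-MA} together with the Leibniz expansion of $X_I(g\widetilde\psi_n)$ reduces to the base case applied to $X_J g$ against the sequence $\{X_L\widetilde\psi_n\}_n$. For the integer regularity $\alpha=d$, I interpolate between $B^{1,1}_{d-1/2}$ and $B^{1,1}_{d+1/2}$ via~\cite[\S5.6]{BerghLofstrom} and~\cite[Theorem 6.1]{BPV-MA}, using the identity $(\ell^1(B^{1,1}_{d-1/2}),\ell^1(B^{1,1}_{d+1/2}))_{[1/2]}=\ell^1(B^{1,1}_d)$ exactly as in the proof of Proposition~\ref{propsecondirTL}. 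Once $(\star)$ is established, the theorem follows as in Theorem~\ref{teoTL}.
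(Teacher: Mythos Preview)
Your proof is correct and follows the same overall architecture as the paper's: Proposition~\ref{proponedir} for the embedding $MB^{1,1}_d\hookrightarrow B^{1,1}_{d,\unif}$ (you correctly note that its proof only uses~\eqref{equivsenzasup}, which Lemma~\ref{lemmabasic}(2) also covers at the endpoint $q=1$, $\alpha=d/p$), then the decomposition $fg=\sum_n(\eta_nf)(\widetilde\psi_n g)$, the algebra property termwise, and finally the sum-of-bricks estimate $(\star)$.

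The difference lies in how $(\star)$ is obtained. The paper derives it as a byproduct of Proposition~\ref{proppqr}(2) (recorded in Remark~\ref{Remxi}), whose proof goes through the Calder\'on reproducing formula and heat-kernel bounds. Your argument instead proves $(\star)$ directly from the first-order difference characterization~\eqref{Besovequiv01}: the base case $\alpha\in(0,1)$ via Leibniz, Lemma~\ref{lemmader}(2) and bounded overlap is clean and correct (the integral $\int_{|y|\le 1}|y|^{1-\alpha}\,\dd\lambda(y)/V(|y|)$ converges exactly because $\alpha<1$), the extension to non-integer $\alpha>1$ by the recursive norm~\eqref{recursiveBesov} is the same as in Proposition~\ref{propsecondirTL}, and the interpolation step at $\alpha=d$ is identical in spirit to the one used there. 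Your route is more elementary for this specific endpoint case, while the paper's route simply reuses Proposition~\ref{proppqr}, which is needed anyway for the localized-norm machinery of Section~\ref{Sec:Besov}. Either way the first inequality $\|fg\|_{B^{1,1}_d}\le\sum_n\|\eta_nfg\|_{B^{1,1}_d}$ is just the triangle inequality (the paper quotes Corollary~\ref{corppp}, but only the $\lesssim$ direction is used).
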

The theorems above cover the case $q\geq p$. Before we describe the case $q<p$, we give the following.
\begin{definition}
For $\alpha>0$ and  $ p,q\in [1,\infty]$, we shall denote by $M_{\alpha}^{p,q}$ the space of all $f \in L^{1}_{\mathrm{loc}}$ such that
\[
\|f\|_{M_{\alpha}^{p,q}} =  \sup_{\| \gamma_{n}\|_{\ell^{p}}\leq 1} \Big\| \sum_{n}\gamma_{n}\eta_{n}f\Big\|_{B^{p,q}_{\alpha}}
\]
is finite, endowed with the above norm.
\end{definition}

Then we have the following.
\begin{theorem}\label{q<p}
Suppose $1\leq q < p < \infty$, $p>d$ and $\alpha \in (d/p,1)$. Then  $M B_{\alpha}^{p,q} = M_{\alpha}^{p,q}$ with equivalence of norms.
\end{theorem}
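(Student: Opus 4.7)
The plan is to establish separately the two continuous inclusions $MB^{p,q}_\alpha \hookrightarrow M^{p,q}_\alpha$ and $M^{p,q}_\alpha \hookrightarrow MB^{p,q}_\alpha$. For the first, I would start from the multiplier bound $\|\sum_n \gamma_n \eta_n f\|_{B^{p,q}_\alpha} \leq \|f\|_{MB^{p,q}_\alpha}\|\sum_n \gamma_n \eta_n\|_{B^{p,q}_\alpha}$ and reduce matters to the uniform estimate $\|\sum_n \gamma_n \eta_n\|_{B^{p,q}_\alpha} \lesssim \|(\gamma_n)\|_{\ell^p}$. Using~\eqref{Besovequiv01}, the $L^p$ part is immediate from the bounded overlap of the $\eta_n$; for the seminorm $\mathcal{A}^{p,q}_\alpha$, Lemma~\ref{lemmader}(2) will give $\|\oD_y \eta_n\|_p \lesssim |y|$ uniformly in $n$, and combined with the bounded overlap of $\supp \oD_y\eta_n$ for $|y|\leq 1$ this yields $\|\oD_y \sum_n \gamma_n\eta_n\|_p \lesssim |y|\,\|(\gamma_n)\|_{\ell^p}$; the integral $\int_0^1 r^{q(1-\alpha)-1}\,dr$ then converges precisely because $\alpha<1$, closing the estimate.

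For the harder second inclusion, given $f\in M^{p,q}_\alpha$ and $g\in B^{p,q}_\alpha$, I would decompose $g$ so as to trigger the synthesis operator $T_f((\gamma_n))=\sum_n \gamma_n \eta_n f$, whose operator norm equals $\|f\|_{M^{p,q}_\alpha}$. Setting $c_n = \lambda(B_1)^{-1}\int_{x_n B_1} g\, d\lambda$, Jensen's inequality together with the bounded overlap of $\{x_n B_1\}_n$ gives $\|(c_n)\|_{\ell^p} \lesssim \|g\|_p \leq \|g\|_{B^{p,q}_\alpha}$, so that the splitting $g = \sum_n c_n \eta_n + r$, with $r=\sum_n (g-c_n)\eta_n$, produces a principal contribution $T_f((c_n))$ bounded acceptably by $\|f\|_{M^{p,q}_\alpha}\|g\|_{B^{p,q}_\alpha}$. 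The residual $fr = \sum_n (f\eta_n)(g-c_n)\tilde\eta_n$, with $\tilde\eta_n\in\sC$ a wider cutoff equal to $1$ on $\supp\eta_n$, should then be estimated by combining the Besov--H\"older embedding $B^{p,q}_\alpha\hookrightarrow C^{\alpha-d/p}$ (valid because $\alpha>d/p$), the algebra property~\eqref{algebra} term-by-term, and the vanishing averages of $(g-c_n)$ on $x_n B_1$.

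The hard part will be the residual estimate. Because $\ell^q\subsetneq \ell^p$ when $q<p$, a direct $\ell^p$-type discretization of the Besov norm of $g$ is unavailable: in contrast with Proposition~\ref{propsecondirTL} for Triebel--Lizorkin spaces and with the range $q\geq p$ of Theorem~\ref{teoB}, one cannot in general control $\sum_n \|g\eta_n\|_{B^{p,q}_\alpha}^p$ by $\|g\|_{B^{p,q}_\alpha}^p$. The mean-zero structure of each piece $(g-c_n)\eta_n$ on $x_n B_1$ must therefore be genuinely exploited, for instance through a local Poincar\'e-type estimate transferring the $L^p$-size of $(g-c_n)\tilde\eta_n$ to $\|\oD_y g\|_p$-integrals on $x_n B_3$ and then summed via an $\ell^q$-argument compatible with the scaling built into $\mathcal{A}^{p,q}_\alpha$; this step reflects the asymmetry between the cases $q<p$ and $q\geq p$ already observed in the Euclidean theory of~\cite{NS}.
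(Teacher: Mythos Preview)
Your first inclusion $MB^{p,q}_\alpha \hookrightarrow M^{p,q}_\alpha$ is fine and essentially matches the paper; the paper additionally splits $\N$ into the disjoint families of Lemma~\ref{covering}(4) before applying the multiplier bound, but the bounded-overlap argument you give is adequate.

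The gap is in the reverse inclusion. Your principal--residual decomposition $g=\sum_n c_n\eta_n + r$ only invokes $T_f$ once, with the particular coefficients $c_n$; after that you are left with bounding $\|fr\|_{B^{p,q}_\alpha}$, which is exactly the multiplier estimate you are trying to prove, now for $r$ instead of $g$. The mean-zero structure of $g-c_n$ on $x_nB_1$ buys nothing for the Besov seminorm, since $\oD_y$ annihilates constants and hence the seminorm of $(g-c_n)\tilde\eta_n$ is essentially that of $g\tilde\eta_n$. Reassembling $fr$ from its localizations would require Proposition~\ref{proppqr}(1) with $r=q$, i.e.\ an $\ell^q$ control on the pieces, while the only available localization control on $g$ (Proposition~\ref{proppqr}(2) with $r=p$) is in $\ell^p$; when $q<p$ these do not match, and no Poincar\'e-type input closes the gap. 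Your sketch for the residual is therefore not a plan that can be completed.

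The paper's device is different: rather than decompose $g$, one decomposes the seminorm of $fg$ via the Leibniz rule~\eqref{leibnizD1} into $\sigma_0$ (difference on $g$) and $\sigma_1$ (difference on $f$) exactly as in Proposition~\ref{prop_embed2}. The term $\sigma_0$ is handled as there, with no constraint on the relation between $p$ and $q$. For $\sigma_1$, the key observation is that $\sigma_1$ is controlled by $\sum_n \|\tilde\xi_n g\|_\infty^p\,\|\oD_y(\eta_n f)\|_p^p$; one then \emph{chooses} the test sequence $\gamma_n \propto \|\eta_n g\|_\infty$, which lies in the unit ball of $\ell^p$ by~\eqref{alphaeps}, and uses Lemma~\ref{covering}(4) to pick subfamilies $I_k$ with pairwise disjoint $\supp\oD_y(\eta_n f)$ so that $\sum_{n\in I_k}|\gamma_n|^p\|\oD_y(\eta_n f)\|_p^p=\|\oD_y(\sum_{n\in I_k}\gamma_n\eta_n f)\|_p^p$. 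This turns $\sigma_1$ directly into the $B^{p,q}_\alpha$-seminorm of $\sum_n\gamma_n\eta_n f$, hence into $\|f\|_{M^{p,q}_\alpha}$. The freedom to adapt $(\gamma_n)$ to $g$ is precisely what your decomposition discards.
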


The reason of the restriction on the $\alpha$'s will become clear soon, and is due to the fact that we use first-order differences only. The approach followed for Triebel--Lizorkin spaces does work in this case, unless $p=q$ (which amounts to $B^{p,p}_{\alpha} = F^{p,p}_{\alpha}$), since the analogue of Proposition~\ref{propsecondirTL} fails for Besov spaces.

The remaining part of the Section is devoted first to some technical results, then to the proofs of Theorems \ref{teoB}, \ref{teoB11}, and \ref{q<p}. 

\subsection{Some equivalences of norms}
We shall need a characterization in the same spirit as~\eqref{Besovequiv01} which involves the modulus of smoothness $\omega_{1}$, defined as
\[
 \omega_1 (f,t,p) = \sup_{|y|<t} \|\oD_{y} f \|_p \qquad t>0.
\]
 We begin with a few lemmas.

\begin{lemma}\label{equivfirst}
Suppose $p,q\in [1,\infty]$ and $\alpha \in (0,1)$. Then
\begin{align*}
\|f\|_p + \bigg(\int_0^1 (t^{-\alpha}  \sup_{|y|<t} \|\oD_{y} f \|_p )^q  \frac{\dd t}{t}\bigg)^{1/q}
& \approx  \|f  \|_p + \bigg( \sum_{k\in \N}  (2^{k\alpha}  \omega_1 (f, 2^{-k},p))^q\bigg)^{1/q}.
\end{align*}
\end{lemma}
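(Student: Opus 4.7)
The plan is to use that $\omega_1(f,\cdot,p)$ is nondecreasing and then dyadically discretize the integral.

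First I would note that by definition $\omega_1(f,t,p) = \sup_{|y|<t}\|\oD_y f\|_p$, and this is a nondecreasing function of $t>0$. So the integrand in the left-hand side is $(t^{-\alpha}\omega_1(f,t,p))^q \frac{dt}{t}$, and the claim is simply that a discrete dyadic $\ell^q$ norm and the continuous $L^q((0,1],dt/t)$ norm are equivalent on this class of functions.

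Next I would partition $(0,1] = \bigcup_{k\ge 0}(2^{-k-1},2^{-k}]$ and, on each interval, bound $\omega_1(f,t,p)$ above and below by its values at the endpoints using monotonicity:
\[
\omega_1(f,2^{-k-1},p) \le \omega_1(f,t,p) \le \omega_1(f,2^{-k},p), \qquad t\in (2^{-k-1},2^{-k}].
\]
Similarly, $2^{k\alpha}\le t^{-\alpha} \le 2^{(k+1)\alpha}$ on the same interval. Writing $a_k = 2^{k\alpha}\omega_1(f,2^{-k},p)$, this gives
\[
c\, 2^{-\alpha q} a_k^q \,\log 2 \le \int_{2^{-k-1}}^{2^{-k}} (t^{-\alpha}\omega_1(f,t,p))^q\,\frac{dt}{t} \le C\, 2^{\alpha q} a_k^q \, \log 2
\]
for $1\le q <\infty$, after an innocuous shift of index on the lower bound (the discretization near $k$ controls, up to a factor $2^{\alpha q}$, the integral near $k$ and near $k+1$). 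Summing over $k\in\N$ yields the desired two-sided bound
\[
\bigg(\int_0^1 (t^{-\alpha}\omega_1(f,t,p))^q\,\frac{dt}{t}\bigg)^{1/q} \approx \bigg(\sum_{k\in\N} a_k^q\bigg)^{1/q}.
\]

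For $q=\infty$ both sides reduce to $\sup_{t\in(0,1]} t^{-\alpha}\omega_1(f,t,p)$ and $\sup_{k\in\N} 2^{k\alpha}\omega_1(f,2^{-k},p)$ respectively, and the same monotonicity argument gives their equivalence up to the factor $2^\alpha$. Adding $\|f\|_p$ to both sides finishes the proof.

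There is no real obstacle here: the only ingredient beyond dyadic bookkeeping is monotonicity of $\omega_1(f,\cdot,p)$, which is immediate from its definition as a supremum over a ball of radius $t$ (balls being nested in $t$). The implicit constants depend only on $\alpha$ and $q$.
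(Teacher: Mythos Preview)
Your proposal is correct and is precisely the standard dyadic discretization the paper alludes to; the paper's proof is a one-line remark that this is routine, and you have supplied exactly the expected details using monotonicity of $\omega_1(f,\cdot,p)$.
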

\begin{proof}
It is just a standard discretization and reconstruction of the integral.
\end{proof}

\begin{lemma}\label{chanineq}
Suppose $p,q\in [1,\infty]$ and $\alpha \in (0,1)$. Then
\[
 \|f\|_{p} + \As^{p,q}_\alpha(f) \approx \|f\|_{p} + \bigg( \sum_{k\in \N}  (2^{k\alpha}  \omega_1 (f, 2^{-k},p))^q\bigg)^{1/q} \approx \|f\|_{B^{p,q}_{\alpha}}.
\]
\end{lemma}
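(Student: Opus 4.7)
The plan is to prove the first equivalence
\[
\|f\|_p+\As^{p,q}_\alpha(f)\approx \|f\|_p+\Big(\sum_{k\in\N}(2^{k\alpha}\omega_1(f,2^{-k},p))^q\Big)^{1/q}
\]
directly, whereupon the second equivalence with $\|f\|_{B^{p,q}_\alpha}$ is immediate from~\eqref{Besovequiv01}. The method is a dyadic decomposition of the integral $\As^{p,q}_\alpha(f)^q$ into annuli $A_k=\{y:2^{-k-1}<|y|\leq 2^{-k}\}$, on which $|y|^{-\alpha q}\approx 2^{k\alpha q}$ and $V(|y|)\approx\lambda(A_k)\approx V(2^{-k})$ by local doubling.

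The easy direction $\As^{p,q}_\alpha(f)\lesssim (\sum_k 2^{k\alpha q}\omega_1(f,2^{-k},p)^q)^{1/q}$ follows from the trivial pointwise bound $\|\oD_y f\|_p\leq\omega_1(f,2^{-k},p)$ on each $A_k$, followed by summation.

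The main obstacle is the reverse direction, which demands bounding the \emph{supremum} $\omega_1(f,2^{-k},p)$ by an \emph{average} of $\|\oD_w f\|_p$ over an annulus. The crucial tool is the three-term identity
\[
\oD_y f(x)=\oD_z f(x)+\oD_{yz^{-1}}f(xz^{-1}),
\]
obtained by adding and subtracting $f(xz^{-1})$. Taking $L^p$-norms in $x$ and absorbing the modular factor (bounded on $B_1$) yields $\|\oD_y f\|_p\lesssim \|\oD_z f\|_p+\|\oD_{yz^{-1}}f\|_p$. Fixing $y$ with $|y|<2^{-k}$ and averaging this inequality over $z$ in a shell of radii $\approx 2^{-k}$ gives, via the change of variables $w=yz^{-1}$ (whose Jacobian is controlled since all quantities stay in $B_1$) together with the triangle inequality in $|\cdot|$, an estimate
\[
\|\oD_y f\|_p\lesssim \frac{1}{V(2^{-k})}\int_{A_k^*}\|\oD_w f\|_p\,\dd\lambda(w),
\]
for a fixed annulus $A_k^*$ of radii comparable to $2^{-k}$. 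Taking the supremum over $|y|<2^{-k}$ controls $\omega_1(f,2^{-k},p)$, and Jensen's inequality (valid since $q\geq 1$) lifts this to the $q$-th power.

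It then remains to multiply by $2^{k\alpha q}$, sum over $k$, observe that $2^{k\alpha q}/V(2^{-k})\approx |w|^{-\alpha q}/V(|w|)$ on $A_k^*$, and interchange sum and integral using the bounded overlap of $\{A_k^*\}_{k\in\N}$. The outcome is $\lesssim \As^{p,q}_\alpha(f)^q+\|f\|_p^q$, where the $\|f\|_p^q$-term absorbs the contribution of the set $\{|w|>1\}$ via Lemma~\ref{lemmader}(1). The limiting cases $q=\infty$ and $p=\infty$ proceed with the usual modifications: essential suprema in place of sums or integrals, and Jensen is not needed when $q=\infty$.
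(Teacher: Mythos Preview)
Your argument is correct, and it takes a genuinely different route from the paper's proof. The paper does \emph{not} prove the first equivalence in both directions; instead it establishes the one-sided chain
\[
\|f\|_{p} + \As^{p,q}_\alpha(f) \lesssim \|f\|_{p} + \bigg( \sum_{k}  (2^{k\alpha}  \omega_1 (f, 2^{-k},p))^q\bigg)^{1/q} \lesssim \|f\|_{B^{p,q}_{\alpha}}
\]
and closes the loop via~\eqref{Besovequiv01}. The second (hard) inequality is obtained through the heat semigroup: one writes $f$ via the Calder\'on reproducing formula $f = c_m\int_0^1 (s\Ls)^m \e^{-s\Ls}f\,\frac{\dd s}{s} + \text{(smooth remainder)}$, uses Lemma~\ref{lemmader}~(1)--(2) to get $\sup_{|y|<t}\|\oD_y[(s\Ls)^m\e^{-s\Ls}f]\|_p \lesssim \min(1,s^{-1/2}t)\|(s\Ls)^m\e^{-s\Ls}f\|_p$, and concludes by Schur's test on the kernel $t^{-\alpha}s^{\alpha/2}\min(1,s^{-1/2}t)$.

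Your route bypasses the heat kernel entirely, relying only on the subadditivity identity $\oD_y f = \oD_z f + \oD_{yz^{-1}}f(\cdot\,z^{-1})$ to dominate the \emph{supremum} modulus by an \emph{integral} modulus; this is the classical approximation-theory device on $\R^d$, transplanted to the group setting. It is more elementary and self-contained. The paper's approach, on the other hand, reuses machinery already in place and extends verbatim to higher-order differences once Lemma~\ref{lemmaderstrat2} is available.

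One point you should make precise: for $A_k^*$ to be an \emph{annulus} (so that the bounded-overlap step is legitimate), you must average $z$ over a shell at a scale strictly larger than $|y|$, say $|z|\in(2\cdot 2^{-k},4\cdot 2^{-k})$, so that the reverse triangle inequality forces $|w|=|yz^{-1}|\geq |z|-|y|\gtrsim 2^{-k}$. If you average over the same-scale shell, $w$ can approach $e$ and the $A_k^*$ are nested balls with unbounded overlap; the conclusion still holds, but then via geometric summability of $2^{k(\alpha q+d)}$ rather than bounded overlap.
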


\begin{proof}
By~\eqref{Besovequiv01}, it will be enough to prove the chain of inequalities
\[
 \|f\|_{p} + \As^{p,q}_\alpha(f) \lesssim \|f\|_{p} + \bigg( \sum_{k}  (2^{k\alpha}  \omega_1 (f, 2^{-k},p))^q\bigg)^{1/q} \lesssim \|f\|_{B^{p,q}_{\alpha}}.
\]
The first inequality can be easily seen by decomposing the ball $|y|\leq 1$ into annuli $2^{-k} \leq |y|\leq 2^{-k+1}$, $k\in \N$. To show the second inequality, by Lemma~\ref{equivfirst} it is enough to prove that for $m\geq 1$
\[
\bigg(\int_0^1 (t^{-\alpha}  \sup_{|y|<t} \|\oD_{y} f \|_p )^q  \frac{\dd t}{t}\bigg)^{1/q}  \lesssim \|f\|_p + \bigg(\int_0^1 \!(t^{-\alpha/2}  \| (t\Ls )^{m}\e^{-t\Ls} f \|_p )^q \, \frac{\dd t}{t}\bigg)^{1/q}  .
\]
Recall that, see e.g.~\cite[(4.1)]{BPV-MA},
\[
f= \frac{1}{(m-1)!} \int_0^{1} (s\Ls)^{m} \e^{-s\Ls} f \, \frac{\dd s}{s} +  \sum_{\ell=0}^{m-1} \frac{1}{\ell!}  \Ls^\ell \e^{-\Ls} f.
\]
The second term is easily dealt with, as by Lemma~\ref{lemmader}~(2) and the $L^{p}$-boundedness of the heat semigroup
\[
\|\oD_{y} (\Ls^\ell \e^{-\Ls} f) \|_p \lesssim |y| \sum_{j=1}^{\ell} \|X_{j} \Ls^\ell \e^{-\Ls} f\|_{p} \lesssim |y|\|f\|_{p}.
\]
As for the first term, we note that by~\eqref{heatestimate} and again the $L^{p}$-boundedness of the heat semigroup
\[
\|X_{j} (s\Ls)^{m} \e^{-s\Ls} f\|_p = \|X_{j}  \e^{-s\Ls}(s\Ls)^{m} f\|_p  \lesssim s^{-\frac12} \| \e^{-\frac{s}{2}\Ls}(s\Ls)^{m} f\|_p ,
\]
whence by Lemma~\ref{lemmader}~(1) and~(2)
\begin{align*}
\sup_{|y|<t} \bigg\|\oD_{y} \int_0^{1} (s\Ls)^{m} \e^{-s\Ls} f \, \frac{\dd s}{s}   \bigg\|_p 
&\leq \int_0^{1} \sup_{|y|<t} \| \oD_{y} (s\Ls)^{m} \e^{-s\Ls} f\|_p \, \frac{\dd s}{s} \\
& \lesssim  \int_0^{1} \min(1, s^{-1/2}t)\| (s\Ls)^{m} \e^{-s\Ls} f\|_p \, \frac{\dd s}{s}.
\end{align*}
Then, 
\begin{multline*}
\bigg(\int_0^1  \bigg(t^{-\alpha}  \sup_{|y|<t} \bigg\|\oD_{y} \int_0^{1} (s\Ls)^{m} \e^{-s\Ls} f \, \frac{\dd s}{s}   \bigg\|_p \bigg)^q \, \frac{\dd t}{t}\bigg)^{1/q} \\
 \lesssim  \bigg(\int_0^1 \bigg( \int_0^{1} t^{-\alpha} \min(1, s^{-1/2}t)\| (s\Ls)^{m} \e^{-s\Ls} f\|_p \, \frac{\dd s}{s}\bigg)^q \, \frac{\dd t}{t}\bigg)^{1/q} .
\end{multline*}
Since 
\[
 \int_0^1 t^{-\alpha} s^{\alpha/2} \min(1, s^{-1/2}t) \, \frac{\dd s}{s}	\lesssim 1, \qquad   \int_0^1 t^{-\alpha} s^{\alpha/2} \min(1, s^{-1/2}t) \, \frac{\dd t}{t}\lesssim 1,
\]
uniformly for $t,s \in (0,1)$, respectively, by Schur's test 
\begin{multline*}
\bigg(\int_0^1  \bigg( \int_0^{1} t^{-\alpha} \min(1, s^{-1/2}t)\| (s\Ls)^{m} \e^{-s\Ls} f\|_p \, \frac{\dd s}{s}\bigg)^q \, \frac{\dd t}{t}\bigg)^{1/q}\\
\lesssim \bigg(\int_0^1 (s^{-\alpha/2}  \| (s\Ls)^m \e^{-s\Ls} f \|_p )^q \, \frac{\dd s}{s}\bigg)^{1/q},
\end{multline*}
and this completes the proof.      
\end{proof}

\subsection{Localized norms}
The aim of this subsection is twofold: on the one hand, we shall show an analogue of Proposition~\ref{propsecondirTL} for Besov spaces, but in a \emph{necessarily} weaker form; and this sheds some light on why the Besov case is more involved than the Triebel--Lizorkin case. On the other hand, it will provide us with a useful result, namely Corollary~\ref{corppp} below, which we shall need to prove Theorem~\ref{teoB}. Let us give the following
definition. 

\begin{definition}\label{def:Xpqr}
Suppose $p,q,r \in [1, \infty]$ and $\alpha>0$.  We denote by $X_{\alpha}^{p,q,r}$ the collection of all $ f\in \mathcal{S}'$ such that
\[
\|f\|_{X_\alpha^{p,q,r}} = \bigg(\sum_{n \in \N}\| \eta_{n}f\|_{X_\alpha^{p,q}}^r \bigg)^{1/r} <\infty 
\]
with the usual modification in case $r=\infty$.
\end{definition}
In view of Definition~\ref{def:Xpqr}, Proposition~\ref{propsecondirTL} can be rephrased by saying that $F_{\alpha}^{p,q} = F^{p,q,p}_{\alpha}$ with equivalence of norms when $p,q\in (1,\infty)$. In particular, since $B^{p,p}_\alpha = F^{p,p}_\alpha$, we also get that $B^{p,p}_\alpha = B^{p,p,p}_{\alpha} $ for $p\in (1,\infty)$. As we shall see in Corollary~\ref{corppp} below, this actually holds for $p\in [1,\infty]$, but the situation for general $p,q,r$ is quite different. Indeed, we have only the following results which, on $\R^d$, are ``if and only if''; cf.~\cite[Proposition 3.6]{NS}).

\begin{proposition} \label{proppqr}
Suppose $p,q,r \in [1,\infty]$ and $\alpha>0$.
\begin{itemize}
\item[\emph{(1)}] If $r \leq \min(p,q)$, then $B_\alpha^{p,q,r} \hookrightarrow B_\alpha^{p,q}$;
\item[\emph{(2)}] if $r\geq \max(p,q)$, then $B_\alpha^{p,q}\hookrightarrow B_\alpha^{p,q,r}$.
\end{itemize}
\end{proposition}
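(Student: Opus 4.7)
The plan is to establish both embeddings first for $\alpha\in(0,1)$, where~\eqref{Besovequiv01} reduces matters to controlling $\|f\|_p + \As^{p,q}_\alpha(f)$, and then to lift to arbitrary $\alpha>0$ via the same two-step recipe used in Proposition~\ref{propsecondirTL}. Throughout the base case $\alpha\in(0,1)$, the argument will rely on three recurrent ingredients: the partition of unity $\sum_n\eta_n=1$, the bounded overlap of $\{x_nB_m\}_{n\in\N}$ from Lemma~\ref{covering}, and the Leibniz formula~\eqref{leibnizD1}, giving
\[
D_y(\eta_n f)(x) = D_y\eta_n(x)\,f(x) + \eta_n(xy^{-1})\,D_y f(x),
\]
with both summands supported in $x_nB_3$ when $|y|\leq 1$.

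For part~(1), assume $r\leq\min(p,q)$. The bounded overlap first gives the pointwise estimate $|D_y f|^p\lesssim \sum_n|D_y(\eta_n f)|^p$, hence, after taking $L^p$--norms and applying $\ell^r\hookrightarrow\ell^p$ (valid since $r\leq p$),
\[
\|D_y f\|_p \lesssim \Bigl(\sum_n\|D_y(\eta_n f)\|_p^r\Bigr)^{1/r}.
\]
Raising to the $q$-th power, integrating against $|y|^{-\alpha q}\dd\lambda(y)/V(|y|)$ and then moving the $\ell^r$-sum outside the $L^q$-integral via Minkowski's inequality (allowed since $q/r\geq 1$) produces
\[
\As^{p,q}_\alpha(f) \lesssim \Bigl(\sum_n\As^{p,q}_\alpha(\eta_n f)^r\Bigr)^{1/r},
\]
and Lemma~\ref{lemmabasic}(1) combined again with $\ell^r\hookrightarrow\ell^p$ handles the $L^p$-part.

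For part~(2), assume $r\geq\max(p,q)$. Combining~\eqref{leibnizD1}, Lemma~\ref{lemmader}(2) and~\eqref{unifbound} yields
\[
\|D_y(\eta_n f)\|_p \lesssim |y|\,\|f\mathbf{1}_{x_nB_3}\|_p + \|(D_y f)\mathbf{1}_{x_nB_3}\|_p, \qquad |y|\leq 1.
\]
Plugging this into $\As^{p,q}_\alpha(\eta_nf)$ and using $\alpha<1$ to see that $\int_{|y|\leq 1}|y|^{q(1-\alpha)}\dd\lambda(y)/V(|y|)$ is finite splits $\As^{p,q}_\alpha(\eta_n f)$ into a term bounded by $\|f\mathbf{1}_{x_nB_3}\|_p$ and a term
\[
B_n := \Bigl(\int_{|y|\leq 1}|y|^{-\alpha q}\|(D_y f)\mathbf{1}_{x_nB_3}\|_p^q\,\frac{\dd\lambda(y)}{V(|y|)}\Bigr)^{1/q}.
\]
Minkowski's integral inequality, used to pass from $\ell^r(L^q)$ to $L^q(\ell^r)$ (licit since $r\geq q$), bounds $(\sum_nB_n^r)^{1/r}$ by $\|\,|y|^{-\alpha}(\sum_n\|(D_yf)\mathbf{1}_{x_nB_3}\|_p^r)^{1/r}\,\|_{L^q}$; then $\ell^p\hookrightarrow\ell^r$ ($r\geq p$) together with the bounded overlap dominates the inner $\ell^r$-sum by $\|D_yf\|_p$, so $(\sum_nB_n^r)^{1/r}\lesssim\As^{p,q}_\alpha(f)$. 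The $\|f\mathbf{1}_{x_nB_3}\|_p$ part is dispatched identically via Lemma~\ref{lemmabasic}(1).

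Finally, the extension to arbitrary $\alpha>0$ will follow the second half of the proof of Proposition~\ref{propsecondirTL}: for non-integer $\alpha = k+\alpha'$ with $k\in\N$ and $\alpha'\in(0,1)$, the Besov analogue of the recursive norm~\eqref{recursiveTL} combined with the Leibniz expansion of $X_I(\eta_n f)$ reduces matters to the base case applied to $X_Jf$ for $|J|\leq k$; integer $\alpha = k$ is then obtained by complex interpolation of $\ell^r$-valued Besov spaces between $\alpha_1 = k/2$ and $\alpha_2 = 3k/2$, using~\cite[\S 5.6]{BerghLofstrom} and the commutation of complex interpolation with $\ell^r$-sums, exactly as in Proposition~\ref{propsecondirTL}. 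I expect the most delicate step to be the Minkowski swap in part~(2), where the three exponents $p$, $q$, $r$ must be balanced simultaneously against the support geometry of the $\eta_n$'s; this is where the hypothesis $r\geq\max(p,q)$ is used in full, and it is precisely the place where the analogous $\R^d$ argument of~\cite{NS} would have to be adapted to the sub-Riemannian setting.
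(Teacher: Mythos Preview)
Your proof is correct, and for part~(1) it follows essentially the same bounded-overlap-plus-Minkowski route as the paper. The real difference lies in part~(2): where you simply split $\oD_y(\eta_n f)$ via the Leibniz rule~\eqref{leibnizD1} into a term bounded by $|y|\,\|f\mathbf{1}_{x_nB_3}\|_p$ and a term $\|(\oD_y f)\mathbf{1}_{x_nB_3}\|_p$, the paper instead passes to the $\omega_1$-characterization of Lemma~\ref{chanineq}, invokes the Calder\'on reproducing formula~\eqref{CRF} to write $X_If=\sum_\ell f_{\ell+2k}+\text{remainder}$, and then estimates each dyadic piece using the heat-kernel bound of Lemma~\ref{lemmader}(3). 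Your argument is genuinely more elementary and shorter for this proposition taken in isolation. The paper's longer route is not gratuitous, however: the resulting estimates (labelled $I$ and $I\!I$ in~\eqref{I}--\eqref{II}, and~\eqref{newabs}) are reused verbatim in the proofs of Proposition~\ref{prop_embed2}, Lemma~\ref{embeddingschain}, and their stratified analogues in Section~\ref{Sec:stratified}, where the $\sup_{|y|<2^{-k}}$ structure of the $\omega_1$-norm is essential to the multiplier arguments.

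One caveat on your extension to integer $\alpha$: the interpolation step in Proposition~\ref{propsecondirTL} worked because one had an \emph{equality} $F^{p,q,p}_\alpha = F^{p,q}_\alpha$ for non-integer $\alpha$, so both interpolation endpoints could be rewritten. Here you only have embeddings, so you cannot literally repeat that argument. The fix is easy but worth stating: for~(2), interpolate the linear map $T\colon f\mapsto(\eta_n f)_n$ between $B^{p,q}_{\alpha_0}\to\ell^r(B^{p,q}_{\alpha_0})$ and $B^{p,q}_{\alpha_1}\to\ell^r(B^{p,q}_{\alpha_1})$; for~(1), interpolate the map $R\colon(g_n)_n\mapsto\sum_n\widetilde\psi_n g_n$ (with $\psi\in\sC$, $\psi=1$ on $B_2$) between $\ell^r(B^{p,q}_{\alpha_0})\to B^{p,q}_{\alpha_0}$ and $\ell^r(B^{p,q}_{\alpha_1})\to B^{p,q}_{\alpha_1}$, noting that your base-case argument shows $R$ is bounded and that $RT=\mathrm{Id}$.
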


\begin{proof}
We shall suppose that $\alpha$ is not an integer, so that $\alpha = k_{0} + \alpha'$ with $k_{0}\in\N$ and $\alpha'\in (0,1)$. When $\alpha$ is an integer, one can argue by interpolation as before. 

We begin by proving~(1). We first recall that~\cite[Theorem 4.5]{BPV-MA}
\begin{equation}\label{recursiveBesov}
\|f\|_{B^{p,q}_\alpha} \approx \sum_{|J|\leq k_{0}} \| X_J f\|_{B^{p,q}_{\alpha'}}.
\end{equation}
Then, by Lemma~\ref{chanineq}
\begin{align}
\|f\|_{B^{p,q}_{\alpha}}\lesssim \sum_{|J|\leq k_{0}}  \bigg( \|X_{J} f\|_p +  \bigg( \sum_{k}  (2^{k\alpha'}  \sup_{|y|<2^{-k}} \| \oD_{y} (X_{J} f)\|_{p} )^q\bigg)^{1/q} \bigg) \label{rpqone}.
\end{align}
Fix $J$ such that $|J|\leq k_{0}$. By arguing as in~\eqref{intp} and since $r\leq p$,
\begin{align*}
\|X_{J}f\|_{p}   &= \Big\|  \sum_{n} X_{J}(f\eta_{n}) \Big\|_p \\
& \lesssim \bigg(  \sum_{n}\| X_{J}(f\eta_{n}) \|_p^{p} \bigg)^{1/p} \lesssim  \bigg(  \sum_{n}\| X_{J}(f\eta_{n})\|_p^{r} \bigg)^{1/r} \lesssim \|f\|_{B_\alpha^{p,q,r}}.
\end{align*}
We consider the second term in~\eqref{rpqone}, which we write as $\sum_{|J|\leq k_{0}} \sigma_{f,J}$. Since $\supp \oD_{y}(X_{J}(\eta_{n}f)) \subseteq x_{n}B_3$, by Lemma~\ref{covering}~(2) arguing as in~\eqref{ci-serve-no?}, we get
\[
|\oD_{y}(X_{J}f)|^{r} \leq \bigg( \sum_{n} |\oD_{y} (X_{J}(\eta_{n}f) )|\bigg)^{r} \lesssim \sum_{n} |\oD_{y}(X_{J}(\eta_{n}f))|^r
\]
with a uniform constant depending only on ($N$ and) $r$. Then, since $p\geq r$,
\begin{equation*}
\begin{split}
\sigma_{f,J}^{r} &= \bigg( \sum_{k}  \Big(2^{k\alpha' r}  \sup_{|y|<2^{-k}} \Big\|\Big( \sum_{n} \oD_{y} (X_{J}(\eta_{n}f) )\Big)^{r}\Big\|_{p/r} \Big)^{q/r}\bigg)^{r/q} \\
& \lesssim \bigg( \sum_{k}  \Big(2^{k\alpha' r}  \sup_{|y|<2^{-k}} \Big\| \sum_{n} |\oD_{y}(X_{J}(\eta_{n}f) )|^r \Big\|_{p/r} \Big)^{q/r}\bigg)^{r/q}\\
&  \lesssim  \bigg( \sum_{k}  (2^{k\alpha' r} \sum_{n}  \sup_{|y|<2^{-k}}\| |\oD_{y}(X_{J}(\eta_{n}f) )|^r \|_{p/r} )^{q/r}\bigg)^{r/q}.
\end{split}
\end{equation*}
It remains to observe that the last quantity equals
\[
\Big\| \Big(\sum_{n} 2^{k\alpha' r}  \sup_{|y|<2^{-k}} \|\oD_{y}(X_{J}(\eta_{n}f) ) \|_{p}^{r} \Big)_{k}\Big\|_{\ell^{q/r}},
\]
so that by the triangle inequality in $\ell^{q/r}$ ($q\geq r$) one gets
\begin{align*}
\sigma_{f,J}^{r} &\lesssim  \sum_{n}  \| ( 2^{k\alpha' r}  \sup_{|y|<2^{-k}} \|\oD_{y}(X_{J}(\eta_{n}f) ) \|_{p}^{r} )_{k}\|_{\ell^{q/r}} \\
&=   \sum_{n}  \bigg(\sum_{k} ( 2^{k\alpha'}  \sup_{|y|<2^{-k}} \|\oD_{y}(X_{J}(\eta_{n}f) ) \|_{p})^{q} \bigg)^{r/q} \\
& \lesssim  \sum_{n} \|X_{J}(\eta_{n}f)\|_{B^{p,q}_{\alpha'}}^{r} \lesssim \| f\|_{B_\alpha^{p,q,r}}^{r}.
\end{align*}
Thus (1) is proved for $r< \infty$. If $r=\infty$,  then $ r=p=q = \infty$. By combining~\eqref{maindec} and~\eqref{equivaderiv}, one gets $\|X_{J}f\|_{\infty} \lesssim \sup_{n} \|X_{J}(f \eta_{n})\|_{\infty}$. Moreover
\begin{align*}
 |\oD_y X_{J}f(x)| & \lesssim  \sum_{n\colon \! x\in x_{n}B(e,3)} |\oD_y (X_{J}(\eta_{n}f))(x)| \lesssim  \sup_{n}   |\oD_y (X_{J}(\eta_{n}f))(x)|,
\end{align*}
so that we conclude
\[
|y|^{-\alpha}|\oD_y (X_{J}f)| \lesssim \sup_{n} \|X_{J}(\eta_{n}f)\|_{B_{\alpha}^{\infty,\infty}} = \|f\|_{B_\alpha^{\infty,\infty,\infty}}
\]
which completes the proof of (1).

\smallskip

We now prove (2). Since again by~\eqref{recursiveBesov}
\begin{align*}
\|f\|_{B_\alpha^{p,q,r}} 
& \approx \bigg(\sum_{n}  \sum_{|I|\leq k_{0}}\|X_{I}(f\eta_{n})\|_{B_{\alpha'}^{p,q}}^{r}\bigg)^{1/r}  \\
& \lesssim \bigg( \sum_{n}  \sum_{|I|+|J|\leq k_{0}}\|X_{I}f \cdot X_{J}\eta_{n}\|_{B_{\alpha'}^{p,q}}^{r}\bigg)^{1/r} \nonumber ,
\end{align*}
it will be enough to show that
\begin{align}\label{frometaftof}
 \Big(\sum_{n } \|X_{I}f \cdot X_{J}\eta_{n}\|_p^{r}\Big)^{1/r} \! &+\bigg( \sum_{n} \bigg(\sum_{k} (2^{k\alpha'} \!\! \sup_{|y|<2^{-k}} \|  \oD_{y} (X_{I}f \cdot X_{J}\eta_{n})\|_{p} )^q\bigg)^{r/q}\bigg)^{1/r}\nonumber \\
& \lesssim \| X_{I}f\|_p +  \bigg( \sum_{k}  (2^{k\alpha'}  \omega_1 (X_{I}f, 2^{-k},p))^q\bigg)^{1/q}
\end{align}
whenever $|I|+|J|\leq k_{0}$. The first term in~\eqref{frometaftof} is easily dealt with: using $r\geq p$,~\eqref{unifbound} and arguing as in~\eqref{maindec}
\begin{align*}
\Big(\sum_{n} \| X_{I}f \cdot X_{J}\eta_{n}\|_{p}^{r}\Big)^{1/r} 
& \leq \Big(\sum_{n} \| X_{I}f \cdot X_{J}\eta_{n}\|_{p}^{p}\Big)^{1/p}\\
&  \lesssim \sup_{n}\|X_{J}\eta_{n}\|_{\infty} \Big(\sum_{n }  \| X_{I}f \cdot \mathbf{1}_{x_{n}B(e,2)}\|_p^{p}\Big)^{1/p}  \lesssim  \| X_{I}f\|_p.
\end{align*}
Next, we consider the second term in the left hand side
of~\eqref{frometaftof}, and we call it $E(f)$. By the triangle
inequality in $\ell^{r/q}$ ($r\ge q$) and then by the embedding
 $\ell^{p} \hookrightarrow\ell^{r}$ ($r\ge p$)
\begin{equation}\label{Af}
\begin{split}
E(f)
&\leq  \bigg(\sum_{k}2^{k\alpha' q}\bigg(\sum_{n}\sup_{|y| < 2^{-k}}\|\oD_y (X_{I}f \cdot X_{J}\eta_{n}) \|_p^r \bigg)^{q/r}\bigg)^{1/q} \\
& \lesssim \bigg(\sum_{k}2^{\alpha' kq}\bigg(\sum_{n}\sup_{|y| < 2^{-k}} \|\oD_y (X_{I}f \cdot X_{J}\eta_{n}) \|_p^p \bigg)^{q/p} \bigg)^{1/q}.
\end{split}
\end{equation}
Recall now that for all $m$
\begin{equation}\label{CRF}
\begin{split}
X_{I}f
&= \frac{1}{(m-1)!} \int_0^{1} (t\Ls)^{m} \e^{-t\Ls} X_{I}f  \frac{\dd t}{t} +  \sum_{\ell=0}^{m-1} \frac{1}{\ell!}  \Ls^\ell \e^{-\Ls} X_{I}f  \\
& =: \sum_{\ell\in \Z } f_{\ell+2k} +  \sum_{\ell=0}^{m-1} \frac{1}{\ell!}  \Ls^\ell \e^{-\Ls} X_{I}f,  
\end{split}
\end{equation}
where $f_\ell=0$ if $\ell\le0$, while if $\ell\ge1$
$$
 f_\ell  =\frac{1}{(m-1)!}  
\int_{2^{-\ell}}^{2^{-\ell+1}} \!\!\!(t\Ls)^{m} \e^{-t\Ls} X_{I}f
\frac{\dd t}{t} . 
$$  
We choose any $m\geq 1$ ($m=1$ would suffice, but we maintain greater generality for later use). Then, $E(f)^{q}\lesssim  I^{q} + I\!I^{q}$,  where
 \begin{align}\label{I} 
  I^q  & =  \sum_{k}2^{\alpha' kq}\bigg(\sum_{n}\sup_{|y| < 2^{-k}} \Big\| \sum_{\ell+2k\geq 0} |\oD_y (X_{J}\eta_n \cdot f_{\ell+2k})|\Big\|_p^p\bigg)^{q/p} ,
   \end{align}
   while
 \begin{align} \label{II}
 I\!I^q &=  \sum_{k}2^{\alpha' kq}\bigg(\sum_{n}\sup_{|y| < 2^{-k}} \sum_{\ell =0}^{m-1}  \|\oD_y (X_{J}\eta_n\cdot \Ls^\ell \e^{-\Ls} X_{I}f ) \|_p^p  \bigg)^{q/p} .  
\end{align}
As for $I\!I$,  since by Lemma~\ref{lemmader}~(3) and~\eqref{unifbound}
\[
\sup_{|y| < 2^{-k}} \|\oD_y ( X_{J}\eta_n\cdot \Ls^\ell \e^{-\Ls}
X_{I}f ) \|_p \lesssim 2^{-k} \|\textbf{1}_{\supp \eta_n}\e^{-c\Ls}|X_{I}f|\|_p,
\]
we obtain, since $m>\alpha'$ and arguing as in~\eqref{maindec}
\[
I\!I \lesssim  \bigg(\sum_{k}2^{\alpha' kq}\Big(\sum_{n} 2^{-kp}
\|\textbf{1}_{\supp \eta_n}\e^{-c\Ls}|X_{I}f|\|_p^p \Big)^{q/p}\bigg)^{1/q}\lesssim \|X_{I}f\|_p.
\]
As for $I$,
 \begin{align*} 
I^{q} & \lesssim \sum_{k}2^{\alpha' kq}\bigg(\sum_{n} \Big( \sum_{{\ell+2k\ge 1}} \sup_{|y| < 2^{-k}}  \| \oD_y (X_{J}\eta_n \cdot f_{\ell+2k})\|_p \Big)^p\bigg)^{q/p}\nonumber\\
  & \lesssim \sum_{k}2^{\alpha' kq}\bigg( \sum_{{ \ell+2k\ge1 }}\Big( \sum_{n}  \sup_{|y| < 2^{-k}} \| \oD_y ( X_{J}\eta_n \cdot f_{\ell+2k}) \|_p^p\Big)^{1/p}\bigg)^{q}.
 \end{align*}
Notice now that for $ \ell + 2k \geq  1 $ and $|y|<2^{-k}$, again by Lemma~\ref{lemmader}~(3) and~\eqref{unifbound}
\begin{equation}\label{newabs}
\begin{split}
\| \oD_y  &( X_{J}\eta_n\cdot  f_{\ell+2k} )\|_p \\
 &\lesssim \int_{2^{-\ell-2k}}^{2^{-\ell-2k+1}}  \big\| \oD_{y} \big( X_{J} \eta_n \cdot \e^{-(t - 2^{-\ell-2k-1}) \Ls} (t\Ls)^{m} \e^{-2^{-\ell-2k-1}\Ls} X_{I}f\big)\big\|_{p}\frac{\dd t}{t}  \\
& \lesssim  2^{\ell /2} \int_{2^{-\ell-2k}}^{2^{-\ell-2k+1}} \big\| \mathbf{1}_{\supp \eta_n} \e^{-c 2^{-\ell-2k} \Ls} |(t\Ls)^{m} \e^{-2^{-\ell-2k-1}\Ls} X_{I}f|\big\|_{p}\frac{\dd t}{t} \\
 &\lesssim  2^{\ell /2}\big\| \mathbf{1}_{\supp \eta_n} \e^{-c 2^{-\ell-2k} \Ls} | ( 2^{-(\ell +2k+1)}\Ls)^{m}\e^{-2^{-\ell-2k-1}\Ls} X_{I}f|\big\|_{p}.
 \end{split}
 \end{equation}
 Therefore, by the bounded overlap property and the $L^{p}$ boundedness of the heat semigroup,
 \begin{align*}
 \sum_{n}  \sup_{|y| < 2^{-k}} & \| \oD_y ( X_{J}\eta_n \cdot f_{\ell+2k}) \|_p^p \\
 & \lesssim  2^{\ell p/2} \big\| \e^{-c 2^{-\ell-2k} \Ls} | ( 2^{-(\ell +2k+1)}\Ls)^{m}\e^{-2^{-\ell-2k-1}\Ls} X_{I}f|\big\|_{p}^{p}\\
 & \lesssim 2^{\ell p/2} \|  ( 2^{-(\ell +2k+1)}\Ls)^{m} \e^{-2^{-\ell-2k-1}\Ls} X_{I}f  \|_p^{p}.
\end{align*}
Moreover,
\begin{align*}
   \Big(\sum_{n}  \sup_{|y| < 2^{-k}} \| \oD_y ( X_{J}\eta_n f_{\ell+2k} )\|_p^p\Big)^{1/p} 
& \lesssim   \Big(\sum_{n} \| X_{J}\eta_n  f_{\ell+2k} \|_p^p\Big)^{1/p} \\
&  \lesssim  \|    f_{\ell+2k} \|_p  \\
 & \lesssim   \|  ( 2^{-(\ell +2k+1)}\Ls)^{m} \e^{-2^{-\ell-2k-1 }\Ls}  X_{I}f \|_p.
 \end{align*}
In other words
\[
  \Big(\sum_{n}  \sup_{|y| < 2^{-k}} \| \oD_y (X_{J}\eta_n f_{\ell+2k} )\|_p^p\Big)^{1/p} \!\! \lesssim  \min (1,  2^{\ell /2} ) \|  ( 2^{-(\ell +2k+1)}\Ls)^{m} \e^{-2^{-\ell-2k -1}\Ls} X_{I}f \|_p.
\]
Hence,
\begin{align*}
I  &\lesssim \bigg( \sum_{k}\bigg(2^{\alpha' k}
\sum_{{ \ell +2k\ge1 }}  \min (1,  2^{\ell /2} ) \|  ( 2^{-(\ell +2k+1)}\Ls)^{m} \e^{-2^{-\ell-2k-1 }\Ls}  X_{I}f  \|_p\bigg)^{q}\bigg)^{1/q},
\end{align*}
and by the triangle inequality in $\ell^q$ we get, as $m>\alpha$,
\begin{align*}
I  & \lesssim   \sum_{\ell\in\Z} \min (1,  2^{\ell /2} )  \left( \sum_{k}2^{\alpha' kq}\|  ( 2^{-(\ell +2k+1)}\Ls)^{m} \e^{-2^{-\ell-2k-1}\Ls}X_{I} f \|_p ^{q}\right)^{1/q} \\
& \lesssim \sum_{\ell\in\Z}  2^{-\frac{\ell \alpha'}{2}}  \min (1,  2^{\ell /2} )   \left( \sum_{k}2^{\alpha' \frac{( \ell+2k+1) q}{2}}
\|  ( 2^{-(\ell +2k+1)}\Ls)^{m} \e^{-2^{-\ell-2k-1 }\Ls}   X_{I}f \|_p ^{q}\right)^{1/q} \\
   & \lesssim \|X_{I}f\|_{B^{p,q}_{\alpha'}}
  \lesssim \|f\|_{B^{p,q}_\alpha},
\end{align*}
which completes the proof.
\end{proof}

\begin{corollary}\label{corppp}
If $p\in [1,\infty]$ and $\alpha>0$, then $B^{p,p}_\alpha = B^{p,p,p}_\alpha$.
\end{corollary}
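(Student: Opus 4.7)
The plan is to simply specialize Proposition~\ref{proppqr} to the case $q=r=p$. With this choice one has simultaneously $r=p\leq \min(p,q)=p$ and $r=p\geq \max(p,q)=p$, so both hypotheses~(1) and~(2) of Proposition~\ref{proppqr} are satisfied. Applying (1) gives the continuous embedding $B^{p,p,p}_\alpha \hookrightarrow B^{p,p}_\alpha$, while applying (2) gives the reverse embedding $B^{p,p}_\alpha \hookrightarrow B^{p,p,p}_\alpha$. Combining the two yields the equality of spaces (with equivalence of norms).

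There is essentially no obstacle here: the corollary is a direct corollary of the proposition, and the only work would be to point out that the borderline indices $r=p=q$ simultaneously satisfy the two conditions. I would write the proof in essentially one sentence.
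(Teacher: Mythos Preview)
Your proposal is correct and is exactly the paper's intended argument: the corollary is stated immediately after Proposition~\ref{proppqr} with no separate proof, precisely because taking $q=r=p$ in parts (1) and (2) of that proposition yields both embeddings.
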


\begin{remark}\label{Remxi}
As the proof of Proposition~\ref{proppqr} shows, the only properties of $\eta$ which were used were those of all functions in $\sC$.  In addition to this, the proof of part~(2) shows also that for all $p\in [1,\infty]$ and $\xi\in \sC$
\[
\bigg(\sum_{n\in \N} \| f \widetilde \xi_n\|_{B^{p,p}_\alpha}\bigg)^{1/p}  \lesssim \|f\|_{B^{p,p}_\alpha}.
\]
\end{remark}

\subsection{The case $q\geq p$}
We now proceed to proving Theorem~\ref{teoB}.   One implication is given by Proposition~\ref{proponedir}. The other implication is given by the following proposition.

\begin{proposition}\label{prop_embed2}
Suppose $d<p\leq q\leq \infty$ and $\alpha \in (d/p,1)+\N$. Then
\[
\| fg \|_{B_\alpha^{p,q}} \lesssim  \| g \|_{B_\alpha^{p,q}} \|  f \|_{B_\alpha^{p,q}, {\unif}}
\]
for all $g\in B_\alpha^{p,q}$ and $f$ uniformly locally in $B_\alpha^{p,q} $.
\end{proposition}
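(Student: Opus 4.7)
The plan is to reduce to the base case $\alpha\in(d/p,1)$ and then to exploit localization by means of the partition of unity $\sum_n \eta_n=1$. For $\alpha=k+\alpha'$ with $k\in\N$ and $\alpha'\in(d/p,1)$, the equivalence $\|h\|_{B^{p,q}_\alpha}\approx\sum_{|J|\le k}\|X_Jh\|_{B^{p,q}_{\alpha'}}$ from~\cite[Theorem~4.5]{BPV-MA} together with the Leibniz rule $X_J(fg)=\sum_{I+L=J}c_{I,L}(X_If)(X_Lg)$ reduces the problem to bounding $\|(X_If)(X_Lg)\|_{B^{p,q}_{\alpha'}}$ in the base case. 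One then applies the base case to $X_If$ and $X_Lg$: the recovery uses $\|X_Lg\|_{B^{p,q}_{\alpha'}}\lesssim\|g\|_{B^{p,q}_\alpha}$ and, via Lemma~\ref{lemmabasic}~(3) together with the boundedness of $X_I$ between Besov spaces, $\|X_If\|_{B^{p,q}_{\alpha',\unif}}\lesssim\|f\|_{B^{p,q}_{\alpha,\unif}}$.

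For $\alpha\in(d/p,1)$ I would use~\eqref{Besovequiv01} to write $\|fg\|_{B^{p,q}_\alpha}\approx\|fg\|_p+\As^{p,q}_\alpha(fg)$. The Sobolev embedding $B^{p,q}_\alpha\hookrightarrow L^\infty$ (valid since $\alpha>d/p$), applied uniformly to $f\eta_n$ and combined with $\sum_n\eta_n=1$, yields $\|f\|_\infty\lesssim\|f\|_{B^{p,q}_{\alpha,\unif}}$, so $\|fg\|_p\le\|f\|_\infty\|g\|_p$ is under control. Using the Leibniz rule~\eqref{leibnizD1} to decompose $\oD_y(fg)$, the term $\|f\oD_yg\|_p\le\|f\|_\infty\|\oD_yg\|_p$ contributes $\|f\|_\infty\As^{p,q}_\alpha(g)\lesssim\|f\|_{B^{p,q}_{\alpha,\unif}}\|g\|_{B^{p,q}_\alpha}$ and is already of the desired form.

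The core of the argument is the term $\|g(\cdot y^{-1})\oD_yf\|_p$: the obstacle is that $\oD_yf\notin L^p$ in general, $f$ being only uniformly locally in $B^{p,q}_\alpha$. I would fix $\phi\in\sC$ with $\phi\equiv 1$ on $B_3$: then for $x\in\supp\eta_n\subseteq x_nB_2$ and $|y|\le 1$ both $x$ and $xy^{-1}$ lie in $x_nB_3$ (where $\tilde\phi_n\equiv1$), so $\oD_yf(x)=\oD_y(f\tilde\phi_n)(x)$ and $|g(xy^{-1})|\le\|g\tilde\phi_n\|_\infty$ on $\supp\eta_n$. Writing $g(\cdot y^{-1})\oD_yf=\sum_n \eta_n\,g(\cdot y^{-1})\,\oD_y(f\tilde\phi_n)$ and invoking the bounded overlap of the $\eta_n$ yields
\[
\|g(\cdot y^{-1})\oD_yf\|_p^p \lesssim \sum_n \|g\tilde\phi_n\|_\infty^p\,\|\oD_y(f\tilde\phi_n)\|_p^p.
\]
Since $q\ge p$, Minkowski's inequality in $L^{q/p}(\dd\lambda/V)$ then gives
\[
\bigg(\int_{|y|\le 1}\!\bigg(\frac{\|g(\cdot y^{-1})\oD_yf\|_p}{|y|^\alpha}\bigg)^{\!q}\frac{\dd\lambda(y)}{V(|y|)}\bigg)^{\!1/q} \lesssim \bigg(\sum_n \|g\tilde\phi_n\|_\infty^p\,\As^{p,q}_\alpha(f\tilde\phi_n)^p\bigg)^{\!1/p},
\]
and pulling out $\sup_n \As^{p,q}_\alpha(f\tilde\phi_n)\lesssim\|f\|_{B^{p,q}_{\alpha,\unif}}$ (using~\eqref{Besovequiv01} and Lemma~\ref{lemmabasic}~(2)) reduces everything to the key estimate $\bigl(\sum_n \|g\tilde\phi_n\|_\infty^p\bigr)^{1/p} \lesssim \|g\|_{B^{p,q}_\alpha}$.

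The main obstacle is this final $\ell^p$-$L^\infty$ bound, and it is exactly where the hypothesis $p>d$ enters: the naive route via $\|g\tilde\phi_n\|_\infty\lesssim\|g\tilde\phi_n\|_{B^{p,q}_\alpha}$ would require $B^{p,q}_\alpha\hookrightarrow B^{p,q,p}_\alpha$, which by Proposition~\ref{proppqr}~(2) forces $p\ge q$, precisely the opposite of the regime considered here. I would instead choose an intermediate $\beta$ with $d/p<\beta<\alpha$ (possible since $\alpha>d/p$) and chain three ingredients: the Sobolev embedding $L^p_\beta\hookrightarrow L^\infty$ (uniform for functions supported in translates of a fixed ball, by left-invariance of $\bX$), which gives $\|g\tilde\phi_n\|_\infty\lesssim\|g\tilde\phi_n\|_{L^p_\beta}=\|g\tilde\phi_n\|_{F^{p,2}_\beta}$; Proposition~\ref{propsecondirTL} applied to $g$ with the family $\{\tilde\phi_n\}$ in the role of $\{\varphi_{(n)}\}$ (its hypotheses being satisfied via~\eqref{unifbound}), yielding $\sum_n \|g\tilde\phi_n\|_{F^{p,2}_\beta}^p\lesssim\|g\|_{F^{p,2}_\beta}^p$; and the standard Besov-to-Sobolev embedding $B^{p,q}_\alpha\hookrightarrow L^p_\beta$ for $\beta<\alpha$ from~\cite{BPV-MA}. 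Concatenating these closes the argument.
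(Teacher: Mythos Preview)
Your argument is correct and follows the same overall architecture as the paper's proof (reduce to $\alpha'\in(d/p,1)$ via~\eqref{recursiveBesov}, Leibniz for $\oD_y$, localize, exploit $q\ge p$ to interchange sums, and close with an $\ell^p(L^\infty)$ bound on the pieces of $g$). Two aspects of your execution differ from the paper and are worth recording. First, for the term where $\oD_y$ falls on $g$ you simply bound $\|f\,\oD_yg\|_p\le\|f\|_\infty\|\oD_yg\|_p$ without localizing; the paper instead localizes from the outset via $\sum_n\eta_n\widetilde\xi_n$, so its corresponding piece $\sigma_0$ involves $\sum_n\|\oD_y(\widetilde\xi_nX_Jg)\|_p^p$, which it then has to control through the Calder\'on reproducing formula and the heat-kernel machinery of Proposition~\ref{proppqr}. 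Your route is shorter here. Second, for the key bound $\bigl(\sum_n\|g\widetilde\phi_n\|_\infty^p\bigr)^{1/p}\lesssim\|g\|_{B^{p,q}_\alpha}$ you pass through $L^p_\beta=F^{p,2}_\beta$ and invoke Proposition~\ref{propsecondirTL}; the paper instead uses $B^{p,p}_{\alpha'-\varepsilon}\hookrightarrow L^\infty$ together with Corollary~\ref{corppp}/Remark~\ref{Remxi}. Both approaches work, but note that Proposition~\ref{propsecondirTL} requires $p\in(1,\infty)$, so your argument as written does not cover $p=\infty$; there, however, $\sup_n\|g\widetilde\phi_n\|_\infty\le\|g\|_\infty$ is immediate, so the gap is easily filled.
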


\begin{proof}  
Let $\xi \in \sC$ be such that  $\xi=1$ on  $\supp \eta$, so that
$\xi \eta = \eta$ and  $\widetilde \xi_n \eta_n=\eta_n$.
By assumption, $\alpha = k_{0}+ \alpha'$ with $d/p<\alpha'<1$ for some $k_{0}\in \N$. We begin by observing that by~\eqref{recursiveBesov}
\begin{align*}
\|fg\|_{B^{p,q}_\alpha} &\approx \sum_{|I|+|J| \leq k_{0}} \|X_{I}f X_{J}g\|_{B^{p,q}_{\alpha'}},
\end{align*}
so that we shall consider, for $|I|+|J| \leq k_{0}$, the quantity
\begin{equation} \label{forrff}
\begin{split}
  & \|X_{I}f X_{J}g\|_{B^{p,q}_{\alpha'}}\\
 & \lesssim   \|X_{I}f  X_{J}g\|_{p} + \bigg( \sum_{k}  (2^{k\alpha'}\!\! \sup_{|y|<2^{-k}}  \|\oD_{y}(X_{I} fX_{J}g)\|_{p})^q\bigg)^{1/q} \\
&    \lesssim \|X_{I}f X_{J}g\|_{p} + \bigg( \sum_{k}
\bigg(2^{k\alpha'} \sup_{|y|<2^{-k}} \bigg\| \sum_{n}
\oD_{y}(\eta_{n}\widetilde\xi_{n} X_{I}fX_{J}g )\bigg\|_{p}\bigg)^q\bigg)^{1/q} ,
\end{split}
\end{equation}
where we have used the identity $\widetilde \xi_n \eta_n=\eta_n$. On the one hand,
\begin{equation}\label{normapIJ}
\begin{split}
\| X_{I}f X_{J}g\|_{p } 
&\lesssim 
 \bigg\| \sum_{n} |\eta_n X_{I}f | | \widetilde\xi_n X_{J}g | \bigg\|_p\\
 & \lesssim \sup_{n}\|\eta_n X_{I}f\|_\infty  \|  X_{J}g  \|_p   \lesssim  \|g\|_{B_\alpha^{p,q}}  \|  f \|_{B_\alpha^{p,q}, {\unif}}  ,
\end{split}
\end{equation}
the last step  thanks to~\eqref{equivaderiv}, Proposition~\ref{propsecondirTL}, and the embedding $B_{\alpha'}^{p,q} \hookrightarrow L^\infty$, cf.~\cite[Theorem 5.1]{BPV-MA}. Moreover, by~\eqref{leibnizD1}
\[
|\oD_y (\eta_{n}X_{I}f\cdot \widetilde\xi_n X_{J}g)| \leq  |(\eta_n X_{I}f) \oD_y (\widetilde\xi_n  X_{J}g)| +|(\widetilde\xi_n X_{J}g)(\cdot y^{-1})\oD_y (\eta_n X_{I}f)|.
 \]
Suppose $|y|\le 1$. Since all the terms appearing in the right hand side above are supported in $x_{n}B_{4}$, as in~\eqref{maindec} we get
\begin{align*}
& \sum_{k}  \bigg(2^{k\alpha'} \sup_{|y|<2^{-k}} \Big\| \sum_{n}
 \oD_y ( \eta_{n}X_{I}f\cdot\widetilde\xi_n X_{J}g)\Big\|_{p}\bigg)^q\\
 &\lesssim   \sum_{k} 2^{k\alpha' q} \sup_{|y|< 2^{-k}} \Big(  \sum_{n}\| (\eta_n X_{I}f) \cdot \oD_y(\widetilde\xi_n  X_{J}g)\|_p^p\Big)^{q/p} \\
& \qquad  +  \sum_{k} 2^{k\alpha' q} \sup_{|y|< 2^{-k}} \Big(  \sum_{n}\| (\widetilde\xi_n X_{J}g)(\cdot y^{-1})\oD_y (\eta_n X_{I}f)\|_p^p\Big)^{q/p} =: \sigma_{0}^{q} + \sigma_{1}^{q}.
\end{align*}
We shall estimate $\sigma_{0}$ and $\sigma_{1}$ separately.

Since $B_{\alpha'}^{p,q} \hookrightarrow L^\infty$, and again by~\eqref{equivaderiv},
\begin{align*}
\|(\eta_n X_{I}f) \oD_y(\widetilde\xi_n  X_{J}g)\|_p 
 & \leq  \|  \oD_y (\widetilde\xi_n  X_{J}g)  \|_p  \| \eta_n X_{I}f \|_\infty \\
 & \lesssim \|  \oD_y (\widetilde\xi_n X_{J}g)  \|_p  \| f \|_{B_\alpha^{p,q }, \unif},
\end{align*}
whence
\[
\sigma_{0}
\lesssim  \bigg(\sum_{k}\bigg(2^{k\alpha' p}\sup_{|y|< 2^{-k}} \sum_{n\in\N}  \|\oD_y(\widetilde\xi_nX_{J}g)  \|_p^p\bigg)^{q/p} \bigg)^{1/q}  \|  f \|_{B_\alpha^{p,q }, \unif}.
\]
As in~\eqref{CRF}, now we write for $m\geq 1$
\[
\widetilde\xi_n X_{J}g = \sum_{\ell \in \Z} \widetilde\xi_n g_{2k+\ell} + \widetilde\xi_n\sum_{\ell=0}^{m-1} \frac{1}{\ell!}  \Ls^\ell \e^{-\Ls} X_{J}g,
\]
which yields
\begin{align*}
& \sum_{k} 
\bigg(2^{k\alpha' p}\sup_{|y|< 2^{-k}} \sum_{n} \|\oD_y(\widetilde\xi_n X_{J}g)  \|_p^p\bigg)^{q/p} 
\\
&\lesssim  \sum_{k} 2^{k\alpha' q}
\bigg(\sum_{n} \sup_{|y|< 2^{-k}}  \bigg\|\sum_{\ell \in \Z}  |\oD_y(\widetilde\xi_n g_{2k+\ell})| + \sum_{\ell=0}^{m-1}|\oD_y( \widetilde\xi_n \Ls^\ell \e^{-\Ls} X_{J}g) | \bigg\|_p^p\bigg)^{q/p} \\
&\lesssim I^{q} + I\! I^{q}
\end{align*}
where $I$ and $I\! I$ are as the ones in~\eqref{I} and~\eqref{II}  with $g$ and $\widetilde\xi_n$ in place of $f$ and $\eta_n$, respectively.
By proceeding exactly as in the proof of Proposition~\ref{proppqr}, we conclude
\begin{align*}
\sigma_{0} &\lesssim  \|g \|_{B_\alpha^{p,q}} \|  f \|_{B_\alpha^{p,q }, \unif}  .
\end{align*}
We now consider $\sigma_{1}$. Since
\begin{align*}
\| (\widetilde \xi_n X_{J}g)(\cdot y^{-j})  \oD_y(\eta_nX_{I}f) \|_p
  & \leq  \| (\widetilde \xi_n X_{J}g) \|_\infty \|\oD_y(\eta_n X_{I}f) \|_p,
\end{align*}
we get
\begin{equation} \label{sigma1}
\sigma_1 \leq \bigg(\sum_{k}\bigg(2^{k\alpha' p}  
\sum_{n} \sup_{|y|< 2^{-k}} \| \widetilde \xi_n X_{J}g  \|_\infty^{p}   \|\oD_y(\eta_n X_{I}f) \|_p^p 
\bigg)^{q/p} \bigg)^{1/q}.
\end{equation}
By the triangle inequality in $\ell^{q/p}$, Lemma~\ref{chanineq} and~\eqref{equivaderiv}, we get
\begin{align*}
\sigma_{1} &  \leq   \bigg(\sum_{n}\| \widetilde \xi_n X_{J}g \|_\infty^p  \bigg(  \sum_{k} 2^{k\alpha' q}     \sup_{|y|< 2^{-k}} \|\oD_y(\eta_n X_{I}f) \|_p^q \bigg)^{p/q} \bigg)^{1/p} \\
&\lesssim  \bigg(\sum_{n}   \| \widetilde \xi_n X_{J}g \|_\infty^p     \|  \eta_n X_{I}f \|_{B_{\alpha'}^{p,q}}^p \bigg)^{1/p} \nonumber \\
& \leq \bigg(\sum_{n}   \| \widetilde \xi_n X_{J}g \|_\infty^p  \bigg)^{1/p}     \| f\|_{B_\alpha^{p,q}, \unif} \nonumber .
\end{align*}
Let now $\varepsilon>0$ be such that $\alpha'-\varepsilon>d/p$. Since $B_{\alpha'-\varepsilon}^{p,p} \hookrightarrow L^\infty$ again by~\cite[Theorem 5.1]{BPV-MA},
\begin{equation}\label{alphaeps}
\bigg( \sum_{n} \| \widetilde \xi_nX_{J}g \|_\infty^p    \bigg)^{1/p} 
\lesssim  \bigg( \sum_{n} \| \widetilde \xi_n X_{J}g \|_{B_{\alpha'-\varepsilon}^{p,p}}^p    \bigg)^{1/p}
\lesssim \| X_{J}g\|_{B_{\alpha'-\varepsilon}^{p,p}}
\end{equation}
the  last bound  by Remark~\ref{Remxi}.   Since  $B_{\alpha'}^{p,q} \hookrightarrow B_{\alpha'-\varepsilon}^{p,p}$ by~\cite[Theorem 5.1]{BPV-MA}, we finally get
\begin{equation*}
\sigma_1 \lesssim  \| g \|_{B_\alpha^{p,q}} \| f\|_{B_\alpha^{p,q}, \unif}
\end{equation*}
and the proof is complete.
\end{proof}

\subsection{The case $B^{1,1}_{d}$ }
Recall that for all $p\in [1,\infty]$, $B^{p,1}_{d/p}$ is an algebra. The condition $q\geq p$ and $q\in [1,\infty]$ restricts to the space $B^{1,1}_{d}$. We have the following proposition, which together with Proposition~\ref{proponedir} concludes the proof of Theorem~\ref{teoB11}.
\begin{proposition}\label{B11d}
For all $g\in B^{1,1}_{d}$ and $f \in B_{d,\unif}^{1,1}$
\[
\|fg\|_{B_{d}^{1,1}}\lesssim \| g\|_{B_{d}^{1,1}} \| f \|_{B_{d}^{1,1}, \unif}.
\]
\end{proposition}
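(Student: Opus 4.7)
The plan is to mimic almost verbatim the argument of Theorem~\ref{teoTL}, exploiting that $B^{1,1}_{d} = F^{1,1}_{d}$ is an algebra (cf.~\cite[Corollary 7.2]{BPV-MA}, noting that $d = d/p$ for $p=1$, so~\eqref{algebra} applies) and that the decomposition in Proposition~\ref{propsecondirTL} has a counterpart for $B^{1,1}_d$ provided by Corollary~\ref{corppp}. More precisely, Corollary~\ref{corppp} with $p=1$ and $\alpha = d$ yields the equivalence
\[
\|h\|_{B^{1,1}_d} \asymp \sum_{n\in\N}\|\eta_n h\|_{B^{1,1}_d},
\]
which plays here the same role as the $\ell^p$-type sum used for Triebel--Lizorkin spaces.

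First I would fix $\psi\in \sC$ such that $\psi=1$ on $\supp\eta$, so that $\eta_n = \eta_n\widetilde\psi_n$ for every $n\in\N$. Then, starting from the equivalence above applied to $h = fg$, I would write
\[
\|fg\|_{B^{1,1}_d} \lesssim \sum_{n\in\N} \|\eta_n f\, \widetilde\psi_n g\|_{B^{1,1}_d}
\lesssim \sum_{n\in\N} \|\eta_n f\|_{B^{1,1}_d}\, \|\widetilde\psi_n g\|_{B^{1,1}_d},
\]
where the second inequality is the algebra property~\eqref{algebra} applied pointwise in $n$ (which is legitimate precisely because $p=q=1$ and $\alpha=d/p=d$).

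Pulling out the supremum over $n$ of $\|\eta_n f\|_{B^{1,1}_d}$ bounds the first factor by $\|f\|_{B^{1,1}_d,\unif}$, and the sum over $n$ of $\|\widetilde\psi_n g\|_{B^{1,1}_d}$ is controlled by $\|g\|_{B^{1,1}_d}$ by Remark~\ref{Remxi} applied with $p=1$, giving
\[
\|fg\|_{B^{1,1}_d} \lesssim \|f\|_{B^{1,1}_d,\unif}\sum_{n\in\N} \|\widetilde\psi_n g\|_{B^{1,1}_d} \lesssim \|f\|_{B^{1,1}_d,\unif}\|g\|_{B^{1,1}_d},
\]
which is the desired estimate.

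There is no genuine obstacle here: the two key ingredients (Corollary~\ref{corppp} to break up $\|fg\|_{B^{1,1}_d}$ into local pieces, and Remark~\ref{Remxi} to reconstruct $\|g\|_{B^{1,1}_d}$ from the pieces $\widetilde\psi_n g$) already isolate the scalar case $p=q=1$ where the $\ell^p$--summability used in the Triebel--Lizorkin argument degenerates into a plain $\ell^1$ sum, which is perfectly compatible with the Besov norm for the diagonal index $p=q=1$. The algebra property at the endpoint $\alpha=d/p$ takes care of all remaining bookkeeping.
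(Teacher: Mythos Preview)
Your proof is correct and essentially identical to the paper's own argument: both fix an auxiliary $\psi\in\sC$ (the paper calls it $\xi$) with $\psi=1$ on $\supp\eta$, apply Corollary~\ref{corppp} to decompose $\|fg\|_{B^{1,1}_d}$ as $\sum_n\|(\eta_n f)(\widetilde\psi_n g)\|_{B^{1,1}_d}$, use the algebra property of $B^{1,1}_d$ termwise, pull out the supremum defining $\|f\|_{B^{1,1}_d,\unif}$, and close with Remark~\ref{Remxi}.
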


\begin{proof}
Let $\xi \in \sC$ be such that  $\xi=1$ on  $\supp \eta$, so that
$\xi \eta = \eta$ and  $\widetilde \xi_n \eta_n=\eta_n$. By Corollary~\ref{corppp}, the algebra property of $B_{d}^{1,1}$, and Remark~\ref{Remxi},
\begin{align*}
\|fg\|_{B_{d}^{1,1}}
& \lesssim  \sum_{n} \|(\eta_{n} f )(\widetilde\xi_{n} g)\|_{B_{d}^{1,1}} \\
& \lesssim    \sum_{n} \| \eta_n f\|_{B_{d}^{1,1}} \| \widetilde\xi_{n} g\|_{B_{d}^{1,1}}\\
& \lesssim   \| f \|_{B_{d}^{1,1}, \unif}  \sum_{n} \|\widetilde\xi_{n} g\|_{B_{d}^{1,1}} \lesssim  \| f \|_{B_{d}^{1,1}, \unif} \|g\|_{B_{d}^{1,1}},
\end{align*}
and the proof is complete.
\end{proof}

\subsection{The case $q<p$}
We begin with a lemma.

\begin{lemma}\label{embeddingschain}
Suppose $p,q\in [1, \infty]$ and $\alpha \in (0,1)$. Then $ B^{p,q}_\alpha\hookrightarrow M_{\alpha}^{{p,q}}\hookrightarrow  B^{p,q}_{\alpha, \unif}$.
\end{lemma}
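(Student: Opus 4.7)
The plan is to prove the two embeddings separately. For $M^{p,q}_\alpha \hookrightarrow B^{p,q}_{\alpha,\unif}$, I would fix $f \in M^{p,q}_\alpha$ and $m \in \N$, and test the defining supremum with the sequence $\gamma_n = 1$ for $n=m$ and $\gamma_n = 0$ otherwise, which has $\|\gamma\|_{\ell^p}=1$ for every $p$. The sum $\sum_n \gamma_n \eta_n f$ then collapses to $\eta_m f$, giving $\|\eta_m f\|_{B^{p,q}_\alpha} \le \|f\|_{M^{p,q}_\alpha}$, and taking the supremum over $m$ yields $\|f\|_{B^{p,q}_{\alpha,\unif}} \le \|f\|_{M^{p,q}_\alpha}$.

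For $B^{p,q}_\alpha \hookrightarrow M^{p,q}_\alpha$, fix $f \in B^{p,q}_\alpha$ and $\gamma$ with $\|\gamma\|_{\ell^p} \le 1$, and set $h = \sum_n \gamma_n \eta_n f$. I would work with the equivalent norm from Lemma~\ref{chanineq}. The bounded overlap property (Lemma~\ref{covering}) together with $0 \le \eta_n \le 1$ gives the pointwise estimate $|h(x)|^p \lesssim \sum_n |\gamma_n|^p \eta_n(x) |f(x)|^p$, which integrates to $\|h\|_p \lesssim \|\gamma\|_{\ell^p} \|f\|_p$. For the differences, apply the Leibniz rule~\eqref{leibnizD1} term by term to obtain
\[
\oD_y h = \Big(\sum_n \gamma_n \eta_n\Big)\oD_y f + \Big(\sum_n \gamma_n \oD_y \eta_n\Big) f(\cdot\, y^{-1}).
\]
The first factor has $L^\infty$ norm at most $N\|\gamma\|_{\ell^\infty} \lesssim \|\gamma\|_{\ell^p}$ by bounded overlap. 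For the second, the pointwise Taylor-type argument underlying Lemma~\ref{lemmader}(2) combined with~\eqref{unifbound} yields $|\oD_y \eta_n(x)| \lesssim |y|$ uniformly in $n, x$; since $\supp \oD_y \eta_n \subseteq x_n B_3$ for $|y|\le 1$, bounded overlap then gives $\|\sum_n \gamma_n \oD_y \eta_n\|_\infty \lesssim |y|\,\|\gamma\|_{\ell^p}$. Using $\|f(\cdot\, y^{-1})\|_p \approx \|f\|_p$ for $|y|\le 1$, one concludes
\[
\|\oD_y h\|_p \lesssim \|\gamma\|_{\ell^p}\bigl(\|\oD_y f\|_p + |y|\,\|f\|_p\bigr).
\]

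Inserting this into the $\omega_1$-description from Lemma~\ref{chanineq}, the first summand contributes $\lesssim \|\gamma\|_{\ell^p}\|f\|_{B^{p,q}_\alpha}$, while the second contributes $\lesssim \|\gamma\|_{\ell^p}\|f\|_p \bigl(\sum_{k\in\N} 2^{kq(\alpha-1)}\bigr)^{1/q}$ (with the obvious modification for $q=\infty$). The series converges precisely because $\alpha<1$, and this is the structural reason behind that hypothesis: first-order differences gain exactly one order of smoothness when falling on a cut-off, and no more, so $\alpha<1$ is exactly what is needed to absorb the extra factor $|y|$ into the sum over $k$. Combining the two contributions and taking the supremum over admissible $\gamma$ gives $\|f\|_{M^{p,q}_\alpha} \lesssim \|f\|_{B^{p,q}_\alpha}$. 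The main subtlety to watch is that the collective estimate on $\sum_n \gamma_n \oD_y \eta_n$ is truly uniform in $\gamma$ and $n$; this is exactly what the pairing of~\eqref{unifbound} with Lemma~\ref{covering} provides.
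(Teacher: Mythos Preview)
Your proof is correct, and for the first embedding you take a genuinely different and more elementary route than the paper. The paper estimates $\|\sum_n \gamma_n \eta_n f\|_{B^{p,q}_\alpha}$ by first using bounded overlap and $\|\gamma\|_{\ell^\infty}\le 1$ to reduce to $\big(\sum_k (2^{k\alpha p}\sum_n \sup_{|y|<2^{-k}}\|\oD_y(\eta_n f)\|_p^p)^{q/p}\big)^{1/q}$, and then invokes the full machinery developed in Proposition~\ref{proppqr} (the Calder\'on reproducing formula~\eqref{CRF}, the heat-semigroup splitting into pieces $f_{\ell+2k}$, and the estimates~\eqref{I}--\eqref{II}). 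Your argument bypasses all of this: you apply the Leibniz rule~\eqref{leibnizD1} directly to $\oD_y h$, control the factor $\sum_n\gamma_n\eta_n$ in $L^\infty$ trivially, and handle the commutator term $(\sum_n\gamma_n\oD_y\eta_n)f(\cdot\,y^{-1})$ via the uniform pointwise bound $|\oD_y\eta_n|\lesssim |y|$ together with bounded overlap. This yields $\|\oD_y h\|_p\lesssim \|\oD_y f\|_p + |y|\,\|f\|_p$, and the restriction $\alpha<1$ is exactly what makes the geometric series $\sum_k 2^{kq(\alpha-1)}$ converge. The payoff is a self-contained half-page proof; what the paper's approach buys is reuse of already-established estimates, and a template that generalizes verbatim to the higher-order setting of Lemma~\ref{embeddingschainstrat} on stratified groups (where $\alpha>1$ is allowed and your direct argument would not apply without higher-order differences).
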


\begin{proof}
Pick $n\in\N$ and choose  the sequence $(\gamma_{k}) = \mathbf{1}_{\{k=n\}}$. Then $\sum_{k}\gamma_{k}\eta_{k}f =  \eta_{n}f$, and the second embedding follows.

To prove the first, observe that since $\ell^{p} \hookrightarrow\ell^{\infty}$ and by arguing as in~\eqref{intp} (in one direction), $\| f\|_{M^{p,q}_\alpha}$ is bounded by
\begin{align*}
 \sup_{\| \gamma_{n}\|_{\ell^{p}}\leq 1} & \bigg(\Big( \sum_{n} \|\gamma_{n}\eta_{n}f\|_{p}^{p}\Big)^{1/p} + \bigg(\!\sum_{k}\Big(2^{k\alpha p} \!\sup_{|y| < 2^{-k}} \Big\| \sum_{n}  \oD_y(\gamma_{n}  \eta_{n}f) \Big\|_{p}^p \Big)^{q/p} \bigg)^{1/q}\bigg)
\\
&\lesssim  \bigg(\| f\|_{p}+ \bigg(\sum_{k}\Big(2^{k\alpha p} \sum_{n} 
\sup_{|y| < 2^{-k}}   \|\oD_{y}(\eta_{n} f)\|_{p}^p \Big)^{q/p} \bigg)^{1/q}\bigg)
\end{align*}
so that arguing as from~\eqref{Af} on (with no derivatives) one gets $\| f\|_{M^{p,q}_\alpha} \lesssim \|f\|_{B^{p,q}_\alpha}$, and the first embedding follows.
\end{proof}

\begin{proof}[Proof of Theorem~\ref{q<p}] 
Suppose $\alpha \in (d/p,1)$. Following~\eqref{forrff} (with no derivatives) we shall prove that given $f\in M^{p,q}_\alpha$ and $g\in B^{p,q}_\alpha$
\begin{equation}\label{unodidueD}
\|f g\|_{p} + \bigg( \sum_{k}  (2^{k\alpha} \sup_{|y|<2^{-k}}  \|\oD_{y}(fg)\|_{p})^q\bigg)^{1/q}  \lesssim  \| f\|_{M^{p,q}_\alpha} \| g\|_{B^{p,q}_\alpha}.
\end{equation}
This implies that $\| f\|_{M^{p,q}_\alpha} \gtrsim \|f\|_{MB^{p,q}_\alpha}$.

By~\eqref{normapIJ},
\[
\|f   g\|_{p} \lesssim  \|g\|_{B_\alpha^{p,q}}  \|  f \|_{B_\alpha^{p,q}, {\unif}}\lesssim   \|g\|_{B_\alpha^{p,q}}  \|  f \|_{M^{p,q}_\alpha} ,
\]
the last inequality by Lemma~\ref{embeddingschain}. By arguing as in
the first part of the proof of Proposition~\ref{prop_embed2}  (we
maintain the notation therein)
\[
\bigg( \sum_{k}  (2^{k\alpha} \sup_{|y|<2^{-k}}  \|\oD_{y}( f g)\|_{p})^q\bigg)^{1/q}  \lesssim \sigma_{0}+\sigma_{1}.
\]
Since to estimate $\sigma_{0}$ we did not use any condition on $p$ and $q$, we might argue in the same manner and get
\[
\sigma_{0} \lesssim  \|g\|_{B_\alpha^{p,q}}  \|  f \|_{B_\alpha^{p,q }, \unif} \lesssim \|g\|_{B_\alpha^{p,q}}  \|  f \|_{M^{p,q}_\alpha}
\]
again by Lemma~\ref{embeddingschain}.

We are left with considering $\sigma_{1}$. Select, by Lemma~\ref{covering}, $N$ disjoint families of indices $I_{k}$, $k=1,\dots, N$ with the property that
\[
\N =\bigcup_{k=1}^{N} I_{k}, \qquad  d_C(x_{m},x_{h}) \geq 6 \quad \forall m,h \in I_{k}, \, m\neq k, \; \forall \, k.
\] 
Then, for $|y|\leq 1$ and $m,h \in I_{k}$, $m\neq h$,
\[
\supp \oD_{y}(\eta_{m}f)\cap  \supp \oD_{y}(\eta_{h}f) \subseteq  x_{m}B_3 \cap x_{h}B_3  = \emptyset
\]
and thus
\begin{equation}\label{decIk}
\begin{split}
\sum_{n \in I_{k}}|\gamma_{n}|^{p}\|\oD_{y}(\eta_{n}f)\|_{p}^{p} 
&= \sum_{n \in I_{k}} \int_{G}|\gamma_{n}|^{p} |\oD_{y}(\eta_{n}f)|^{p} \, \dd\lambda \\
& = \int_{G}\Big| \sum_{n \in I_{k}} \oD_{y}(\gamma_{n}\eta_{n}f)\Big|^{p}\, \dd \lambda = \Big\| \sum_{n \in I_{k}} \oD_{y}(\gamma_{n}\eta_{n}f)\Big\|_{p}^{p}.
\end{split}
\end{equation}
Now fix $k=1,\dots, N$ and pick the sequence (we assume  $g\neq 0$ here)
\[
\gamma_n = \gamma \frac{ \|\eta_{n} g\|_{\infty} }{ \| g\|_{B^{p,q}_{\alpha'}}} \quad \mbox{if } n\in I_{k}, \qquad \gamma_{n}=0 \quad \mbox{otherwise}.
\]
If $\gamma$ is small enough, then the sequence $(\gamma_n)$ is in $\ell^{p}$ and has $\ell^{p}$ norm smaller than $1$; recall~\eqref{alphaeps}. This also shows that $\gamma$ can be chosen independent of $g$.

Therefore, by~\eqref{sigma1}, the embedding $B^{p,q}_{\alpha} \subseteq L^{\infty}$ and~\eqref{decIk}
\begin{align*}
\sigma_1 &\lesssim  \bigg(\sum_{k}\bigg(2^{k\alpha p} \sup_{|y|< 2^{-k}}  
\sum_{n} \| \widetilde\xi_n g  \|_\infty^{p}   \|\oD_y(\eta_n f) \|_p^p 
\bigg)^{q/p} \bigg)^{1/q} \\
& \lesssim  \|g\|_{B^{p,q}_\alpha} \bigg(\sum_{k}\bigg(2^{k\alpha p} \sup_{|y| < 2^{-k}}  
\Big\| \sum_{n} \oD_y(\gamma_{n}\eta_n f) \Big\|_p^p \bigg)^{q/p} \bigg)^{1/q}  \lesssim \|g\|_{B^{p,q}_\alpha}\| f\|_{M^{p,q}_\alpha} 
\end{align*}
which concludes the proof of~\eqref{unodidueD}.

Suppose now that $f \in  M B^{p,q}_\alpha$  and $(\gamma_n)\in \ell^{p}$. Then~\eqref{decIk} implies
\begin{equation}\label{normafuori}
\|f\|_{M_{\alpha}^{p,q}}  \lesssim  \sum_{k=1}^{N}\sup_{\| \gamma_{n}\|_{\ell^{p}}\leq 1} \Big\| \sum_{n \in I_{k}} \gamma_{n}\eta_{n}f\Big\|_{B^{p,q}_{\alpha}}.
\end{equation}
By the algebra property of $B^{p,q}_{\alpha}$,
\begin{equation}\label{MBD}
\Big\| \sum_{n \in I_{k}}\gamma_{n} \eta_{n}f\Big\|_{B^{p,q}_{\alpha}} \lesssim \|f\|_{MB^{p,q}_{\alpha}} \Big\| \sum_{n \in I_{k}}\gamma_{n}\eta_{n}\Big\|_{B^{p,q}_{\alpha}},
\end{equation}
where by~\eqref{unifbound}
\begin{align*}
\Big\| \sum_{n \in I_{k}} & \gamma_{n}\eta_{n}\Big\|_{B^{p,q}_{\alpha}} 
 \lesssim  \Big\|   \sum_{n \in I_{k}} \gamma_{n}\eta_{n}\Big\|_{p} + \bigg(\sum_{k} 2^{k\alpha q} \sup_{|y| < 2^{-k}}  \bigg(\sum_{n}
\|\oD_{y}( \gamma_{n}\eta_n)\|_{p}^p \bigg)^{q/p} \bigg)^{1/q} \\
&\lesssim \Big(   \sum_{n}\| \gamma_{n}\eta_{n}\|_{p}^p\Big)^{1/p} + \bigg(\sum_{k} 2^{k\alpha q} \sup_{|y| < 2^{-k}}  \bigg(\sum_{n}|\gamma_{n}|^{p} \|\oD_{y}\eta \|_{p}^p \bigg)^{q/p} \bigg)^{1/q} \\
& \lesssim  \|(\gamma_{n})\|_{\ell^{p}} \| \eta\|_{B^{p,q}_\alpha} \lesssim 1.
\end{align*}
By~\eqref{normafuori} and~\eqref{MBD} we conclude
\begin{align*}
\|f\|_{M_{\alpha}^{p,q}} \lesssim  \|f\|_{MB^{p,q}_{\alpha}}
\end{align*}
and this completes the proof.
\end{proof}
The case $p=\infty$ was excluded by Theorem~\ref{q<p}, but it is easier as the following shows.
\begin{theorem}
Suppose $\alpha>0$ and $q\in [1,\infty]$. Then $M B^{\infty,q}_{\alpha}=B^{\infty,q}_{\alpha}$ with equivalence of norms.
\end{theorem}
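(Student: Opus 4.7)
The plan is to establish both inclusions directly, without needing any of the uniform local machinery developed for $p<\infty$. In the case $p=\infty$ the threshold $\alpha>d/p$ becomes $\alpha>0$, so the hypothesis places us in the range in which the algebra property~\eqref{algebra} applies to $B^{\infty,q}_\alpha$ itself. That property gives, for $f,g\in B^{\infty,q}_\alpha$,
\[
\|fg\|_{B^{\infty,q}_\alpha}\lesssim \|f\|_{B^{\infty,q}_\alpha}\|g\|_{B^{\infty,q}_\alpha},
\]
which is exactly the statement that $f\in MB^{\infty,q}_\alpha$ with multiplier norm controlled by its Besov norm. This yields the inclusion $B^{\infty,q}_\alpha\hookrightarrow MB^{\infty,q}_\alpha$.

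For the reverse inclusion $MB^{\infty,q}_\alpha\hookrightarrow B^{\infty,q}_\alpha$, I would test every multiplier against the constant function $\mathbf{1}$. The only point to verify is that $\mathbf{1}\in B^{\infty,q}_\alpha$ with $\|\mathbf{1}\|_{B^{\infty,q}_\alpha}\lesssim 1$. For $\alpha\in(0,1)$ this is immediate from the equivalent norm~\eqref{Besovequiv01}: since $\oD_y\mathbf{1}\equiv 0$, the seminorm $\mathcal{A}^{\infty,q}_\alpha(\mathbf{1})$ vanishes, while $\|\mathbf{1}\|_\infty=1$. For noninteger $\alpha=k_0+\alpha'$ with $k_0\in\bbN$ and $\alpha'\in(0,1)$, the recursive norm equivalence~\eqref{recursiveBesov} reduces the problem to the previous case, because $X_J\mathbf{1}=0$ for every multi-index with $|J|\geq 1$, so only the term $J=\emptyset$ contributes and $\|\mathbf{1}\|_{B^{\infty,q}_\alpha}\asymp \|\mathbf{1}\|_{B^{\infty,q}_{\alpha'}}\lesssim 1$. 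For integer $\alpha$, interpolation between two nearby noninteger values closes the gap, exactly as in the argument at the end of the proof of Proposition~\ref{propsecondirTL}.

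With $\mathbf{1}\in B^{\infty,q}_\alpha$ in hand, the conclusion is one line: for $f\in MB^{\infty,q}_\alpha$,
\[
\|f\|_{B^{\infty,q}_\alpha}=\|f\cdot\mathbf{1}\|_{B^{\infty,q}_\alpha}\leq \|f\|_{MB^{\infty,q}_\alpha}\|\mathbf{1}\|_{B^{\infty,q}_\alpha}\lesssim \|f\|_{MB^{\infty,q}_\alpha}.
\]
I do not anticipate any serious obstacle: the result essentially records that when the underlying integrability exponent is already $\infty$, the constant function is a free testing element in the target space and the multiplier and Besov norms collapse to one another. The only mildly delicate ingredient is the verification that the constant function belongs to $B^{\infty,q}_\alpha$, and this is handled entirely by the equivalent characterizations~\eqref{Besovequiv01} and~\eqref{recursiveBesov} already available from earlier sections.
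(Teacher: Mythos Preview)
Your proposal is correct and follows essentially the same approach as the paper: the algebra property gives $B^{\infty,q}_\alpha\hookrightarrow MB^{\infty,q}_\alpha$, and testing against $\mathbf{1}\in B^{\infty,q}_\alpha$ gives the reverse inclusion. The only difference is that the paper simply asserts $\mathbf{1}\in B^{\infty,q}_\alpha$, whereas you verify it via~\eqref{Besovequiv01},~\eqref{recursiveBesov} and interpolation; note that this case-splitting is unnecessary, since directly from the defining norm~\eqref{Bnorm} one has $\Ls\mathbf{1}=0$, hence $(t\Ls)^m\e^{-t\Ls}\mathbf{1}=0$ and $\|\mathbf{1}\|_{B^{\infty,q}_\alpha}=\|\mathbf{1}\|_\infty=1$ for every $\alpha>0$.
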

\begin{proof}
One the one hand, $B^{\infty,q}_{\alpha} \hookrightarrow M B^{\infty,q}_{\alpha}$ by the algebra property of $B^{\infty,q}_{\alpha}$. On the other hand, since the constant function equal to $1$ belongs to $B^{\infty,q}_{\alpha}$, one also gets $M B^{\infty,q}_{\alpha} \hookrightarrow B^{\infty,q}_{\alpha}$.
\end{proof}

\section{Stratified groups and wider ranges}\label{Sec:stratified}
In this section we assume that $G$ is a stratified group with the standard dilations $\delta_{s}$, $s>0$, and $\bf{X}$ is a basis of the first layer of the Lie algebra $\mathfrak{g}$. We recall that $G$ is said to be {\em stratified} if its Lie algebra $\mathfrak{g}$ admits a stratification
\[
\mathfrak{g} = V_1\oplus \cdots \oplus V_S,
\]
where $V_1=\operatorname{span} \mathbf X$, $V_{j+1}=[V_1,V_j]$, for $j=1,\dots,S-1$,  and $[V_1,V_S]=0$. We refer the reader to~\cite{Folland, FS} for the basic facts on stratified groups.

In order not to cause any confusion, we shall stress that $G$ is a
stratified Lie group in all the important statements of the section. We shall extend all the theorems in the previous section to the case of all (allowed) regularities.

\subsection{The case $q\geq p$}
\begin{theorem}\label{teoBstratified}
Let $G$ be a stratified group. Suppose $p,q\in [1,\infty]$, $q\geq p$ and $\alpha > d/p$. Then $MB^{p,q}_\alpha = B^{p,q}_{\alpha,\unif}$ with equivalence of norms.
\end{theorem}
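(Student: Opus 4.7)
The plan is to follow the blueprint of Theorem~\ref{teoB} and Proposition~\ref{prop_embed2}, replacing first-order differences by higher-order ones that are available on a stratified group via the dilations $\delta_s$. The inclusion $MB^{p,q}_\alpha \hookrightarrow B^{p,q}_{\alpha,\unif}$ is already given by Proposition~\ref{proponedir} (the argument there uses only the algebra property, valid in the whole range $\alpha>d/p$), so only the reverse inequality
\[
\|fg\|_{B^{p,q}_\alpha}\lesssim \|f\|_{B^{p,q}_{\alpha,\unif}}\,\|g\|_{B^{p,q}_\alpha}
\]
needs to be produced for arbitrary $\alpha>d/p$.

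First I would introduce, for each integer $M\geq 1$, the iterated finite difference
\[
\oD^M_y f(x)=\sum_{k=0}^M(-1)^{M-k}\binom{M}{k}f(xy^{-k}),\qquad x,y\in G,
\]
and record its basic properties: the higher-order Leibniz formula
\[
\oD^M_y(fg)(x)=\sum_{j=0}^M\binom{M}{j}\oD^{j}_y f(xy^{-(M-j)})\,\oD^{M-j}_y g(x),
\]
obtained by induction from~\eqref{leibnizD1}, and the analogue of Lemma~\ref{lemmader}(3),
\[
\|\oD^M_y(\psi\Ls^k \e^{-t\Ls}f)\|_p\lesssim t^{-k-M/2}|y|^M\,\|\mathbf 1_{\supp\psi}\e^{-ct\Ls}|f|\|_p\qquad(0<t<1,\ |y|\leq 1),
\]
which follows from iterating~(2) of Lemma~\ref{lemmader} together with the heat kernel bounds~\eqref{heatabovebelow}--\eqref{heatestimate}. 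On a stratified group, where $\bX$ is a basis of the first layer and $\Ls$ is homogeneous of degree $2$, these estimates hold for every $M$ with uniform constants.

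Next I would generalise Lemma~\ref{chanineq} to the equivalence
\[
\|f\|_{B^{p,q}_\alpha}\approx \|f\|_p+\bigg(\sum_{k\in\N}\big(2^{k\alpha}\,\omega_M(f,2^{-k},p)\big)^q\bigg)^{1/q},\qquad 0<\alpha<M,
\]
with $\omega_M(f,t,p)=\sup_{|y|<t}\|\oD^M_y f\|_p$; the proof is the same as Lemma~\ref{chanineq} but uses the Calder\'on reproducing formula with $m\geq M$ and the bound above. Once this equivalence is available for any $M>\alpha$, the proof of Proposition~\ref{proppqr} carries over verbatim with $\oD^M_y$ in place of $\oD_y$ and \emph{without} the recursive reduction~\eqref{recursiveBesov}: in particular the embeddings $B^{p,q,r}_\alpha\hookrightarrow B^{p,q}_\alpha$ for $r\leq\min(p,q)$ and its converse for $r\geq\max(p,q)$ now hold for every $\alpha>0$, which in turn gives Corollary~\ref{corppp} for all $\alpha>0$.

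The main step is the multiplicative estimate. Fix $M>\alpha$ and a cut-off $\xi\in\sC$ with $\xi\eta=\eta$. Expanding as in~\eqref{forrff} and using the higher-order Leibniz rule,
\[
\Big(\sum_k\big(2^{k\alpha}\sup_{|y|<2^{-k}}\|\oD^M_y(fg)\|_p\big)^q\Big)^{1/q}
\lesssim \sum_{j=0}^M\binom{M}{j}\Sigma_j,
\]
where $\Sigma_j$ is the Besov-type expression involving $\oD^{j}_y(\eta_n f)(\cdot y^{-(M-j)})\,\oD^{M-j}_y(\tilde\xi_n g)$ summed in $n$ via the bounded overlap property. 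The boundary terms $\Sigma_0$ and $\Sigma_M$ are the analogues of $\sigma_0,\sigma_1$ in Proposition~\ref{prop_embed2} and are estimated by exactly the same scheme: the embedding $B^{p,q}_\alpha\hookrightarrow L^\infty$ (valid because $\alpha>d/p$) places the $\eta_n f$ factor in $L^\infty$ uniformly in $n$, and the remaining difference on $\tilde\xi_n g$ is controlled by the higher-order characterisation of the Besov norm combined with Remark~\ref{Remxi} and~\eqref{alphaeps}. The mixed terms $0<j<M$ are handled identically, using the Calder\'on decomposition~\eqref{CRF} with $m\geq M$ for both $\oD^{j}_y(\eta_n f)$ and $\oD^{M-j}_y(\tilde\xi_n g)$, together with the Schur-type dyadic summation used in~\eqref{I}--\eqref{II}.

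The main technical obstacle is the careful bookkeeping of the mixed terms: proving the higher-order analogue of~\eqref{newabs} uniformly in $j=1,\dots,M-1$, and checking that each factor of the form $\|\oD^j_y(\eta_n f)\|_\infty$ can be absorbed into $\|f\|_{B^{p,q}_{\alpha,\unif}}$ via the higher-order embedding $B^{p,q}_\alpha\hookrightarrow L^\infty$ and the extended Corollary~\ref{corppp}. Once these analogues of the first-order computations of Section~\ref{Sec:Besov} are secured, the argument closes along the same lines as Proposition~\ref{prop_embed2} and Theorem~\ref{teoBstratified} follows.
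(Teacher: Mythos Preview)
Your overall strategy --- Leibniz expansion of a higher-order difference of $fg$, bounded overlap, and splitting the terms according to which factor carries the smoothness --- is exactly what the paper does. The gap is in your choice of finite difference. You propose $\oD^M_y f(x)=\sum_k(-1)^{M-k}\binom{M}{k}f(xy^{-k})$, built from \emph{group powers} $y^{-k}$, and assert that the estimate $\|\oD^M_y(\psi\Ls^k\e^{-t\Ls}f)\|_p\lesssim t^{-k-M/2}|y|^M\|\mathbf{1}_{\supp\psi}\e^{-ct\Ls}|f|\|_p$ and the Besov characterization via $\omega_M$ follow ``by iterating (2) of Lemma~\ref{lemmader}''. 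They do not: $\oD_y$ is right translation, and left-invariant vector fields do not commute with right translations; concretely $X_j(f(\cdot\,y^{-1}))=(\mathrm{Ad}(y)X_j\, f)(\cdot\,y^{-1})$, and on a stratified group $\mathrm{Ad}(y)X_j-X_j$ picks up contributions from the higher layers $V_2,\dots,V_S$. So one cannot simply push $X_j$ through $\oD^{M-1}_y$ and iterate. Remark~\ref{remarkHOD} in the paper records precisely this: whether $\oD^{(m)}_y$ satisfies the analogue of Lemma~\ref{lemmaderstrat2}(2) or the Besov characterization~\eqref{Besovequivnorm} is left open, and these are exactly the two ingredients you take for granted.

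The paper avoids this by using the \emph{dilation-based} differences $\oG^{(m)}_{y,\theta}f(x)=\sum_\ell(-1)^{m-\ell}\binom{m}{\ell}f(x\,\delta_{\ell+\theta}(y^{-1}))$. The key bound $\|\oG^{(m)}_{y,\theta}f\|_p\lesssim|y|^m\sum_{|J|\leq m}\|X_Jf\|_p$ (Lemma~\ref{lemmaderstrat2}(2)) is obtained via a Taylor expansion along the one-parameter dilation curves $s\mapsto x\delta_s(v)$ together with the combinatorial identity $\sum_\ell(-1)^\ell\binom{m}{\ell}(\ell+\theta)^k=0$ for $k<m$; this is an argument of Giulini that genuinely exploits the homogeneous structure and is not an iteration of the first-order case. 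The Besov characterization~\eqref{Besovequivnorm} for $\oG^{(m)}_y$ is imported from Giulini and Furioli--Melzi--Veneruso. With these in hand, the Leibniz rule~\eqref{leibnizG} (which now involves the shifted operators $\oG^{(2m-j)}_{y,j}$) feeds into the multiplicative estimate of Proposition~\ref{prop_embed3}, and the rest of your outline goes through along the same lines as Proposition~\ref{prop_embed2}.
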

Inspired by~\cite{Giulini, GiuliniH}, for $m, \theta \in \N$ and $x,y\in G$, we define
\begin{equation}\label{Gythetam}
\oG_{y,\theta}^{(m)} f(x) = \sum_{\ell =0}^{m} (-1)^{m -\ell}  \binom{m}{\ell}  f(x \delta_{\ell+\theta}(y^{-1}) ) .
\end{equation}
When $\theta=0$, we shall simply write $\oG^{(m)}_{y}$ for $\oG_{y,0}^{(m)}$. Observe that $\oG_{y}^{(1)}=\oD_{y} $.

The following identities hold true: for all $m, \theta\in \N$,
\[
\oG^{(m)}_{y,\theta+1} = \oG^{(m+1)}_{y,\theta} + \oG^{(m)}_{y,\theta},
\]
from which one gets 
\begin{equation}\label{theta-0}
\oG^{(m)}_{y,\theta} = \sum_{j=0}^{\theta} \binom{\theta}{j} \oG^{(m+j)}_{y},
\end{equation}
and moreover one has the ``Leibniz'' rule
\begin{equation}\label{leibnizG}
\oG^{(m)}_{y,\theta} (fg) = \sum_{j=0}^{m} \binom{m}{j} \oG^{(j)}_{y,\theta}f \cdot \oG^{(m-j)}_{y,\theta+j}g.
\end{equation}
All the above identities can be proved by induction, and we omit the details. The following lemma is the high-order counterpart of Lemma~\ref{lemmader}.
\begin{lemma}\label{lemmaderstrat2}
Suppose $m, \theta\in \N$,  $p\in [1,\infty]$ and $y\in B_1$. Then the following holds.
\begin{itemize}
\item[{\rm (1)}] $\|\oG_{y,\theta}^{(m)} f \|_p \lesssim \| f\|_p$;
\item[{\rm (2)}] $ \|\oG_{y,\theta}^{(m)} f \|_p \lesssim |y|^m  \sum_{|J|\leq m} \|X_J f\|_p$;
\item[{\rm (3)}]  for all $k\in \N$ and $\psi \in C_c^{\infty}$ there exist $c=c(k)>0$ and $C(\psi)>0$ such that for all $t\in (0,1)$
\[
 \| \oG_y^{(m)} (\psi \Ls^{k}\e^{-t \Ls} f)\|_p \leq C(\psi) t^{-\frac{m}{2}-k} |y|^{m} \|\mathbf{1}_{\supp \psi}\, \e^{-ct \Ls} |f| \|_p,
\]
where $C(\psi)$ depends only on $\|\psi\|_{L^{\infty}_{m}}$.
\end{itemize}
\end{lemma}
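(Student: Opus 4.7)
\smallskip

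Part~(1) reduces to the triangle inequality applied to the $m+1$ translated summands in the definition of $\oG^{(m)}_{y,\theta}$. For $y\in B_1$, each element $\delta_{\ell+\theta}(y^{-1})$ lies in the fixed compact ball $B_{m+\theta}$ (since $|\delta_s(y^{-1})|=s|y|$), so the modular function is uniformly bounded there and $\|f(\cdot\,\delta_{\ell+\theta}(y^{-1}))\|_p\lesssim \|f\|_p$, exactly as in Lemma~\ref{lemmader}(1).

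For part~(2), my plan is to first reduce to the case $\theta=0$ using~\eqref{theta-0}: since $|y|^{m+j}\leq |y|^m$ for $j\geq 0$ and $y\in B_1$, a bound on $\|\oG^{(m+j)}_y f\|_p$ for $j=0,\dots,\theta$ immediately translates into the stated bound for $\oG^{(m)}_{y,\theta}$. For $\theta=0$, the key tool is the \emph{left Taylor polynomial} $P^{m-1}_x f$ of homogeneous degree $\leq m-1$ in the sense of Folland--Stein~\cite{FS}, together with its integral remainder formula writing $f(xz)-P^{m-1}_x f(z)$ as a sum over horizontal multi-indices $|J|=m$ of integrals of $X_J f(x\,\cdot\,)$ against a kernel of size $|z|^m$ supported in a ball of radius comparable to $|z|$. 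The crucial algebraic observation is that, in canonical coordinates realizing $\delta_s$ as $(s^{d_1}\xi_1,\dots,s^{d_d}\xi_d)$, every monomial in $P^{m-1}_x f(\cdot)$ has homogeneous weight $\leq m-1$; hence $P^{m-1}_x f(\delta_\ell(y^{-1}))$ is a polynomial in $\ell$ of degree $\leq m-1$, and this is precisely where the stratification of $\mathfrak g$ is used. Since the alternating sum $\sum_{\ell=0}^m (-1)^{m-\ell}\binom{m}{\ell}\,\cdot|_{\ell}$ is the $m$-th forward difference at $0$ and annihilates such polynomials, only the remainders survive:
\[
\oG^{(m)}_y f(x) = \sum_{\ell=0}^m (-1)^{m-\ell}\binom{m}{\ell}\bigl[f(x\delta_\ell(y^{-1}))-P^{m-1}_x f(\delta_\ell(y^{-1}))\bigr].
\]
Plugging in the integral remainder for each summand, applying Minkowski's inequality, using that left-translation by $\delta_\ell(y^{-1})$ has uniformly bounded $L^p$ cost (the modular function being bounded on $B_m$), and invoking $|\delta_\ell(y^{-1})|^m\leq m^m|y|^m$, I would obtain $\|\oG^{(m)}_y f\|_p \lesssim |y|^m \sum_{|J|=m}\|X_J f\|_p$, which is in fact stronger than the stated bound.

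For part~(3), the plan is to apply~(2) to $g:=\psi\,\Ls^k\e^{-t\Ls}f$ to get $\|\oG^{(m)}_y g\|_p\lesssim |y|^m\sum_{|J|\leq m}\|X_J g\|_p$, and then proceed as in Lemma~\ref{lemmader}(3). By the Leibniz rule, $X_J g$ is a sum of $(X_{J_1}\psi)(X_{J_2}\Ls^k\e^{-t\Ls}f)$ with $|J_1|+|J_2|\leq |J|$; each factor $X_{J_1}\psi$ is uniformly bounded by $\|\psi\|_{L^\infty_m}$ and supported in $\supp\psi$. Splitting $\e^{-t\Ls}=\e^{-t\Ls/2}\e^{-t\Ls/2}$, applying~\eqref{heatestimate} to the kernels of both $\Ls^k\e^{-t\Ls/2}$ and $X_{J_2}\e^{-t\Ls/2}$, and composing via the semigroup property yields the pointwise domination $|X_{J_2}\Ls^k\e^{-t\Ls}f(x)|\lesssim t^{-k-|J_2|/2}(\e^{-ct\Ls}|f|)(x)$. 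For $t\in(0,1)$ and $|J_2|\leq m$ one has $t^{-|J_2|/2}\leq t^{-m/2}$, which upon summation delivers the claimed $C(\psi)\,t^{-k-m/2}|y|^m$ factor.

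The main obstacle is part~(2): concretely, passing from Folland--Stein's standard pointwise Taylor remainder, usually formulated with a sup of $|X_J f|$ over a ball, to the integral form needed to produce an $L^p$ bound by Minkowski, and carefully verifying that the polynomial $P^{m-1}_x f(\delta_s(y^{-1}))$ really is of degree $\leq m-1$ in $s$ in the presence of the higher-layer vector fields that enter the definition of $P^{m-1}_x f$. Parts (1) and (3) are routine adaptations of their first-order analogues in Lemma~\ref{lemmader}.
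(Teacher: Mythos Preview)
Your overall strategy is sound and, for the core case $\theta=0$, essentially equivalent to the paper's argument. Two points deserve comment.

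\textbf{A gap in the reduction to $\theta=0$.} Your use of~\eqref{theta-0} to reduce~(2) to the case $\theta=0$ does not give the stated bound: applying the $\theta=0$ case to each $\oG^{(m+j)}_y$ yields $\|\oG^{(m+j)}_y f\|_p\lesssim |y|^{m+j}\sum_{|J|\leq m+j}\|X_J f\|_p$, and after summing over $j=0,\dots,\theta$ you end up with derivatives of order up to $m+\theta$, not $m$. The fix is immediate and is exactly what the paper does: run the Taylor-plus-cancellation argument directly for general $\theta$, observing that $(\ell+\theta)^n$ is still a polynomial of degree $n\leq m-1$ in $\ell$, so the $m$-th forward difference in $\ell$ annihilates it just as well.

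\textbf{On your ``main obstacle'' and the comparison with the paper.} Your Folland--Stein Taylor approach and the paper's Giulini-style argument are two packagings of the same idea. The paper (following~\cite{Giulini,GiuliniH}) first writes $y^{-1}=v_1\cdots v_M$ with each $v_i\in\exp V_1$ via~\cite[Lemma~(1.40)]{FS}, applies the one-variable Taylor formula with integral remainder to $s\mapsto f(z\delta_s(v_i))$, and obtains directly an expansion $f(x\delta_{\ell+\theta}(y^{-1}))=\sum_{n=0}^{m-1}(\ell+\theta)^n Q_n(x,y^{-1})+R(x,y^{-1},\ell+\theta)$ where the remainder is an explicit integral of $m$-th order horizontal derivatives over a path of length $\lesssim|y|$. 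This sidesteps the obstacle you flag: the integral form of the remainder is built in, and Minkowski gives the $L^p$ bound at once. Your route via the abstract Folland--Stein polynomial $P^{m-1}_x f$ is cleaner to state, but to extract an $L^p$ (rather than pointwise-sup) remainder estimate you would in effect have to unwind the proof of the Folland--Stein Taylor theorem, which \emph{is} the horizontal decomposition the paper uses. So once you resolve your obstacle, the two arguments coincide. Parts~(1) and~(3) are handled correctly and match the paper's treatment.
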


\begin{proof}
Statement (1) is obvious. Statement (2) has been proven in~\cite[Proposition 1]{Giulini} when $\theta=0$. We outline its proof following ~\cite[Lemma 2]{GiuliniH}  without giving all the details. Since $G$ is stratified, given $y\in B_1$, there exist $v_1,\dots,v_M$, such that $y^{-1}=v_1 \dots  v_M$, with $v_i={\rm{exp}}X^{(i)}$, $X^{(i)}\in V_1$, $|v_i|\lesssim |y|$, $i=1,\dots,M$ (see \cite[Lemma (1.40)]{FS}).  For every $x\in G$, $\theta\in\mathbb N$, $\ell=0,\dots,m,$ we write
$$
f\big(x\delta_{\ell+\theta}(y^{-1})\big)=\sum_{i=1}^M\big[f\big(x\delta_{\ell+\theta}(v_1\dots v_{i-1})\delta_{\ell+\theta}(v_i)\big) - f\big(x\delta_{\ell+\theta}(v_1\dots v_{i-1})\big)\big]+f(x).
$$
Notice that for every $z\in G$, $v\in {\rm{exp}} (V_1)$, $j\in\mathbb N$ 
$$
\frac{d}{ds} f(z\delta_s(v))_{\vert s=j}=E(v)f (z\delta_j(v)),
$$
where $E(v)=\sum_{j=1}^{\kappa}c_j(v)X_j$. Taylor's formula applied to the function $s\mapsto f(z\delta_s(v))$ shows that 
$$
\begin{aligned}
&f\big(x\delta_{\ell+\theta}(v_1\dots v_{i-1})\delta_{\ell+\theta}(v_i)\big) - f\big(x\delta_{\ell+\theta}(v_1\dots v_{i-1})\big)\\&=\sum_{k=0}^{m-1}\frac{(\ell+\theta)^k}{k!} [E(v_i)^k f]\big(x\delta_{\ell+\theta}(v_1\dots v_{i-1})\big)\\
&\qquad +\frac{(\ell+\theta)^m}{(m-1)!}\int_0^1(1-s)^{m-1}[E(v_i)^mf]\big(x\delta_{\ell+\theta}(v_1\dots v_{i-1})\delta_{s}(v_i)\big)\, ds.
\end{aligned}
$$
By the previous equality, arguing as in \cite[Lemma 2]{GiuliniH} for every $\ell=0,\dots,m,$ we can write 
$$
\begin{aligned}
f\big(x\delta_{\ell+\theta}(y^{-1})\big)
&= \sum_{n=0}^{m-1}(\ell+\theta)^{n}Q_{n}(x,y^{-1})+R(x,y^{-1},\ell+\theta)
\end{aligned}
$$
for suitable functions $Q_n$ and remainder terms $R$. It follows that 
$$
\begin{aligned}
\oG_{y,\theta}^{(m)}
f(x)&=\sum_{n=0}^{m-1}\sum_{\ell=0}^m(-1)^{m-\ell}\binom{m}{\ell}
(\ell+\theta)^{n} Q_{n}(x,y^{-1})\\ &\qquad +\sum_{\ell=0}^m(-1)^{m-\ell}\binom{m}{\ell}  R(x,y^{-1},\ell+\theta)\\
&=\sum_{\ell=0}^m(-1)^{m-\ell}\binom{m}{\ell}  R(x,y^{-1},\ell+\theta),
\end{aligned}
$$
where we used the fact that $\sum_{\ell=0}^m(-1)^{\ell}\binom{m}{\ell} \ell^k=0$ for every $k<m$, and where $R(x,y^{-1},\ell+\theta)$ is a linear combination of terms of the form
$$
(\ell+\theta)^m\int_0^1(1-s)^{m-i}Df(xu(s))ds,
$$
with $|u(s)|\lesssim |y|$, $D=\sum_{|J|\leq m}  c^D_JX_J$, and $i=0,\dots,m-1$. It follows that 
$$
\begin{aligned}
\|\oG_{y,\theta}^{(m)} f\|_p&\lesssim \sum_{\ell=0}^m \| R(\cdot,y^{-1},\ell+\theta)\|_p\\
&\lesssim (\ell+\theta)^m|y|^m\sum_{|J|\leq m}  \|X_Jf\|_p\lesssim |y|^m\sum_{|J|\leq m}  \|X_Jf\|_p,
\end{aligned}
$$
as required in (2).

Statement (3) can be proved as in Lemma~\ref{lemmader} by means of (2).
\end{proof}

Define
\[
 \omega_{m} (f,t,p) = \sup_{|y|<t} \|\oG_{y}^{(m)} f \|_p \qquad t>0.
\]
For $p,q\in [1,\infty]$, $\alpha>0$ and $m>\alpha$ we have the equivalences of norms 
\begin{equation*}
\begin{split}\label{Besovequivnorm}
\|f\|_{B^{p,q}_{\alpha}} &\approx \|f\|_p + \bigg(\int_0^1 (t^{-\alpha}  \sup_{|y|<t} \|\oG_{y}^{(m)} f \|_p )^q \, \frac{\dd t}{t}\bigg)^{1/q}  \\
&\approx  \|f  \|_p + \bigg( \sum_{k\in \N}  (2^{k\alpha}  \omega_{m} (f, 2^{-k},p))^q\bigg)^{1/q}.
\end{split}
\end{equation*}
The first can be proved by putting together~\cite[Proposition 5.2]{FMV} and~\cite[Proposition 4]{Giulini}. The second is just a discretization as in Lemma~\ref{equivfirst}. 

We now proceed to proving Theorem~\ref{teoBstratified}.  One implication is given by Proposition~\ref{proponedir}. The other implication is the following proposition.

\begin{proposition}\label{prop_embed3}
Let $G$ be a stratified group. Suppose $p, q\in [1,\infty]$, $q\geq p$ and $\alpha>d/p$. Then
\[
\| fg \|_{B_\alpha^{p,q}} \lesssim  \| g \|_{B_\alpha^{p,q}} \|  f \|_{B_\alpha^{p,q}, {\unif}}
\]
for all $g\in B_\alpha^{p,q}$ and $f$ uniformly locally in $B_\alpha^{p,q} $.
\end{proposition}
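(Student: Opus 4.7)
The plan is to follow the blueprint of Proposition~\ref{prop_embed2}, but replace the first-order difference $\oD_y$ and its Leibniz rule~\eqref{leibnizD1} by the high-order analogues $\oG^{(m)}_{y,\theta}$ of~\eqref{Gythetam} and~\eqref{leibnizG}. On a stratified group this removes the need to split $\alpha=k_0+\alpha'$ via vector fields and allows all regularities $\alpha>d/p$ to be treated uniformly, using only the differences.

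Fix an integer $m$ with $m>2\alpha$ and pick $\xi\in\sC$ with $\xi=1$ on $\supp\eta$, so that $\tilde\xi_n\eta_n=\eta_n$ and $fg=\sum_n (\eta_n f)(\tilde\xi_n g)$. By the norm equivalence stated just before Proposition~\ref{prop_embed3}, it suffices to bound $\|fg\|_p$ and
\[
\bigg(\sum_k\Big(2^{k\alpha}\sup_{|y|<2^{-k}}\|\oG^{(m)}_y(fg)\|_p\Big)^q\bigg)^{1/q}
\]
by $\|f\|_{B^{p,q}_\alpha,\unif}\|g\|_{B^{p,q}_\alpha}$. The $L^p$ term is controlled as in~\eqref{normapIJ}, using the bounded overlap property and the embedding $B^{p,q}_\alpha\hookrightarrow L^\infty$ (valid since $\alpha>d/p$). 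For the second, applying~\eqref{leibnizG} with $\theta=0$ to each localized piece yields
\[
\oG^{(m)}_y\big((\eta_n f)(\tilde\xi_n g)\big)=\sum_{j=0}^m\binom{m}{j}\oG^{(j)}_y(\eta_n f)\cdot\oG^{(m-j)}_{y,j}(\tilde\xi_n g),
\]
and the problem reduces to controlling each of the $m+1$ cross terms in the $\ell^q$-sum.

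The boundary cases $j=0$ and $j=m$ reproduce $\sigma_0$ and $\sigma_1$ from the proof of Proposition~\ref{prop_embed2}: place $\|\eta_n f\|_\infty$ (resp.\ $\|\tilde\xi_n g(\cdot\delta_m(y^{-1}))\|_\infty$, bounded by $\|\tilde\xi_n g\|_\infty$) in front, and handle the full-order $\oG^{(m)}_y$-difference of the remaining factor through the dyadic heat-semigroup expansion~\eqref{CRF}, using Lemma~\ref{lemmaderstrat2}~(3) in place of Lemma~\ref{lemmader}~(3). The main obstacle is the intermediate range $0<j<m$: the factor $\oG^{(m-j)}_{y,j}(\tilde\xi_n g)$ only carries difference order $m-j$, which may be below $\alpha$, so the $\omega_{m-j}$-characterization of $\|g\|_{B^{p,q}_\alpha}$ is not available; symmetrically for $\oG^{(j)}_y(\eta_n f)$.

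This is resolved by the choice $m>2\alpha$, which makes the two intervals $\{j:j<m-\alpha\}$ and $\{j:j>\alpha\}$ overlap and cover $\{0,1,\ldots,m\}$. In the first range, put $\eta_n f$ in $L^\infty$ via the embedding, then expand
\[
\oG^{(m-j)}_{y,j}(\tilde\xi_n g)=\sum_{i=0}^j\binom{j}{i}\oG^{(m-j+i)}_y(\tilde\xi_n g)
\]
by~\eqref{theta-0}: each summand now has order $m-j+i\ge m-j>\alpha$, so the semigroup/Lemma~\ref{lemmaderstrat2}~(3) argument of Proposition~\ref{prop_embed2} gives $\ell^q$-control by $\|\tilde\xi_n g\|_{B^{p,q}_\alpha}$. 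In the second range, swap roles: place $\oG^{(m-j)}_{y,j}(\tilde\xi_n g)$ in $L^\infty$ (bounded by a constant multiple of $\|\tilde\xi_n g\|_\infty$ through~\eqref{theta-0} and Lemma~\ref{lemmaderstrat2}~(1)) and treat $\oG^{(j)}_y(\eta_n f)$ analogously, now using $j>\alpha$ to access the $\omega_j$-characterization of $\|f\|_{B^{p,q}_\alpha}$. Summation over $n$, using $p\le q$, the bounded overlap property, and Remark~\ref{Remxi} together with the $L^\infty$-absorption argument of~\eqref{alphaeps}, finally concludes that $\|fg\|_{B^{p,q}_\alpha}\lesssim\|f\|_{B^{p,q}_\alpha,\unif}\|g\|_{B^{p,q}_\alpha}$.
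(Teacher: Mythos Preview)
Your proposal is correct and follows essentially the same route as the paper: the paper applies the Leibniz rule~\eqref{leibnizG} to $\oG_y^{(2m)}$ with $m>\alpha$, splits the sum at $j=m$ (so that either $2m-j>\alpha$ or $j>\alpha$), and treats the two ranges exactly as you describe, using~\eqref{theta-0} and Lemma~\ref{lemmaderstrat2}~(3) for the semigroup expansion and the triangle inequality in $\ell^{q/p}$ together with Remark~\ref{Remxi} for the second range. Your parametrization $m>2\alpha$ with the split into $\{j<m-\alpha\}\cup\{j>\alpha\}$ is the same device up to relabelling.
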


\begin{proof} 
Let $\xi \in \sC$ be such that $\xi = 1$ on $\supp \eta$, so that $\xi \eta = \eta$, and $\eta_n=\eta_n\widetilde\xi_n$. Then, for $m>\alpha$
\begin{align*}
\|fg\|_{B^{p,q}_\alpha}  \approx \|fg\|_{p} +  \bigg( \sum_{k\in \N}  \bigg(2^{k\alpha} \sup_{|y|<2^{-k}} \Big\| \sum_{n} \oG_{y}^{(2m)}(fg \eta_{n}\widetilde\xi_{n})\Big\|_{p}\bigg)^q\bigg)^{1/q}.
\end{align*}
First we observe that 
\begin{align}\label{fgpstrat}
\| f g\|_{p } 
&\lesssim 
 \Big\| \sum_{n\in \N} |\widetilde\xi_n g | | \eta_n   f | \Big\|_p \lesssim  \|   g  \|_p  \sup_{n}\|\eta_n  f\|_\infty \lesssim  \|g\|_{B_\alpha^{p,q}}  \|  f \|_{B_\alpha^{p,q}, {\unif}}  ,
\end{align}
the last step by the embedding $B_{\alpha}^{p,q} \hookrightarrow L^\infty$.  Then, by~\eqref{leibnizG}
\begin{align*}
  |\oG_{y}^{(2m)} (fg)|
  & \leq \sum_{n} |\oG_{y}^{(2m)} (\eta_n f g)| = \sum_{n}
    |\oG_{y}^{(2m)} (\eta_n f \widetilde\xi_n g)| 
   \\
  &    \leq \sum_{j=0}^{2m} \binom{2m}{j} \sum_{n}
|\oG_{y,j}^{(2m-j)}( \widetilde\xi_n g)\oG_{y}^{(j)}( \eta_n
    f)|.
\end{align*}
Suppose $|y|\le 1$. Since $ \oG_{y,j}^{(2m-j)} (\widetilde\xi_n g)$ and $\oG_{y}^{(j)} (\eta_n  f)$ are supported in $B_{2m+3}$, and  $\ell^{p} \hookrightarrow \ell^{q}$
we have 
\begin{align*}
  \bigg( \sum_{k}
  &\bigg(2^{k\alpha} \sup_{|y|<2^{-k}} \bigg\| \sum_{n}
    \oG_{y}^{(m)}(\eta_{n}f\widetilde\xi_n g )\bigg\|_{p}\bigg)^q\bigg)^{1/q} \\
 &\lesssim \sum_{j=0}^{2m} \bigg(\sum_{k} 2^{k\alpha q} \sup_{|y|< 2^{-k}}  
\bigg\|\sum_{n} \oG_{y,j}^{(2m-j)}(\widetilde\xi_n g) \oG_{y}^{(j)}(\eta_n f)\bigg\|_p^q \bigg)^{1/q}\\
&\lesssim  \sum_{j=0}^{2m}  \bigg(\sum_{k}  2^{k\alpha q} \sup_{|y|< 2^{-k}}  \bigg(  \sum_{n}\|\oG_{y,j}^{(2m-j)}(\widetilde\xi_n g)\oG_{y}^{(j)}(\eta_n f)\|_p^p\bigg)^{q/p} \bigg)^{1/q}\\
&=: \sum_{j=0}^{2m} \sigma_j.
\end{align*}
To  estimate this last term, we separate the cases when $j\leq m$ and $j>m$.

Suppose first $0 \le j \leq  m$. Since $B_\alpha^{p,q} \hookrightarrow L^\infty$,
\begin{align*}
\|\oG_{y,j}^{(2m-j)}(\widetilde\xi_ng)  \oG_{y}^{(j)}(\eta_n f) \|_p 
 & \leq  \|  \oG_{y,j}^{(2m-j)}(\widetilde\xi_n g) \|_p  \|\oG_{y}^{(j)}(\eta_n f) \|_\infty \\
 & \lesssim  \|  \oG_{y,j}^{(2m-j)}(\widetilde\xi_n g) \|_p  \| \eta_n f   \|_\infty\\
 & \lesssim \|  \oG_{y,j}^{(2m-j)}(\widetilde\xi_n g)  \|_p  \|  f \|_{B_\alpha^{p,q }, \unif}.
\end{align*}
 Thus
\[
\sum_{j=0}^{m} \sigma_j 
\lesssim  \sum_{j=0}^{m}  \bigg(\sum_{k=0}^{\infty}\bigg(2^{k\alpha p}\sup_{|y|< 2^{-k}} \sum_{n} 
\|\oG_{y,j}^{(2m-j)}(\widetilde\xi_n g)  \|_p^p\bigg)^{q/p} \bigg)^{1/q}  \|  f \|_{B_\alpha^{p,q }, \unif}.
\]
As in~\eqref{CRF}, we write (with $2m-j$ in place of $m$ and with no derivative)
\[
 g_\ell  =\frac{1}{(2m-j-1)!}  
\int_{2^{-\ell}}^{2^{-\ell+1}} \!\!\!(t\Ls)^{2m-j} \e^{-t\Ls} f \, \frac{\dd t}{t}, \qquad \ell\ge1,
\] 
and $g_\ell=0$ if $\ell\le0$, so that
\[
\widetilde\xi_n g = \sum_{\ell \in \Z} \widetilde\xi_n g_{2k+\ell} + \widetilde\xi_n\sum_{\ell=0}^{m} \frac{1}{\ell!}  \Ls^\ell \e^{-\Ls} g.
\]
This yields
\begin{align}
&\bigg(\sum_{k} 
\bigg(2^{k\alpha p}\sup_{|y|< 2^{-k}} \sum_{n} \|\oG_{y,j}^{(2m-j)}(\widetilde\xi_n g)  \|_p^p\bigg)^{q/p} \bigg)^{1/q} \label{thison}
 \lesssim  I + I\!I ,
\end{align}
 where 
  \begin{align*} 
  I^q  & =  \sum_{k}2^{\alpha kq}\bigg(\sum_{n}\sup_{|y| < 2^{-k}} \Big\| \sum_\ell |\oG_{y,j}^{(2m-j)} (\widetilde\xi_n g_{\ell+2k})|\Big\|_p^p\bigg)^{q/p} \nonumber\\
 & \lesssim \sum_{k}2^{\alpha kq}\bigg(\sum_{n} \Big( \sum_\ell \sup_{|y| < 2^{-k}}  \| \oG_{y,j}^{(2m-j)} (\widetilde\xi_ng_{\ell+2k})\|_p \Big)^p\bigg)^{q/p}\nonumber\\
  & \lesssim \sum_{k}2^{\alpha kq}\bigg( \sum_\ell \Big( \sum_{n}  \sup_{|y| < 2^{-k}} \| \oG_{y,j}^{(2m-j)} ( \widetilde\xi_n g_{\ell+2k}) \|_p^p\Big)^{1/p}\bigg)^{q},
 \end{align*}
while
 \begin{align*} 
 I\!I^q &=  \sum_{k}2^{\alpha kq}\bigg(\sum_{n}\sup_{|y| < 2^{-k}} \sum_{\ell =0}^{2m-j-1}  \|\oG_{y,j}^{(2m-j)} (\widetilde\xi_n \Ls^\ell \e^{-\Ls} g ) \|_p^p  \bigg)^{q/p} .  
\end{align*}
As for $I\!I$, by~\eqref{theta-0} and Lemma~\ref{lemmaderstrat2}~(3) with~\eqref{unifbound}
\begin{align*}
\sup_{|y| < 2^{-k}} \|\oG_{y,j}^{(2m-j)} ( \widetilde\xi_n\Ls^\ell \e^{-\Ls} g) \|_p 
&\lesssim  \sum_{{h=0}}^{j} \sup_{|y| < 2^{-k}}
\|\oG^{{(2m-j+h)}}_{y}
( \widetilde\xi_n\Ls^\ell \e^{-\Ls} g) \|_{p} \\
&\lesssim 2^{-k(2m-j)}\|\mathbf{1}_{\supp \widetilde\xi_n} \e^{-c\Ls}|g||\|_p, 
\end{align*}
whence we obtain, since $2m-j\geq m >\alpha$,
\[
I\!I \lesssim  \bigg(\sum_{k}2^{\alpha kq}\Big(\sum_{n} 2^{-kmp} \|\mathbf{1}_{\supp \widetilde\xi_n} \e^{-c\Ls}|g|\|_p^p \Big)^{q/p}\bigg)^{1/q}\lesssim \|g\|_p.
\]
As for $I$, when $\ell + 2k \geq 1$ we observe that again by~\eqref{theta-0} and Lemma~\ref{lemmaderstrat2}~(3) with~\eqref{unifbound}, as in~\eqref{newabs}, if $|y| < 2^{-k}$ then
\begin{align*}
& \|\oG_{y,j}^{(2m-j)}  ( \widetilde\xi_n g_{\ell+2k}) \|_p\\
 &\lesssim  \sum_{{h=0}}^{j} \int_{2^{-\ell-2k}}^{2^{-\ell-2k+1}}  \big\|  \oG_{y,j}^{(2m-j+h)}  \big(  \widetilde\xi_n \e^{-(t - 2^{-\ell-2k-1}) \Ls} (t\Ls)^{2m-j} \e^{-2^{-\ell-2k-1}\Ls} f\big)\big\|_{p}\frac{\dd t}{t}   \\
& \lesssim   \sum_{{h=0}}^{j} 2^{\ell(2m-j+h)/2} \bigg(\int_{2^{-\ell-2k}}^{2^{-\ell-2k+1}} \big\| \mathbf{1}_{\supp \widetilde \xi_n} \e^{-c 2^{-\ell-2k} \Ls} |(t\Ls)^{2m-j} \e^{-2^{-\ell-2k-1}\Ls}  f|\big\|_{p}\frac{\dd t}{t} \\
 &\lesssim   \sum_{{h=0}}^{j}  2^{\ell(2m-j+h)/2} \big\| \mathbf{1}_{\supp \widetilde \xi_n} \e^{-c 2^{-\ell-2k} \Ls} | ( 2^{-(\ell +2k+1)}\Ls)^{2m-j}\e^{-2^{-\ell-2k-1}\Ls} f|\big\|_{p},
 \end{align*}
whence
\begin{align*}
 \sum_{n}  & \sup_{|y| < 2^{-k}} \| \oG_{y,j}^{(2m-j)}  ( \widetilde\xi_ng_{\ell+2k} )\|_p^p  \\
 & \lesssim \sum_{h=0}^{j}2^{\ell(2m-j+h)p/2}\|  ( 2^{-(\ell +2k+1)}\Ls)^{2m-j} \e^{-2^{-\ell-2k-1 }\Ls} g \|_p^{p},
\end{align*}
as well as
\begin{align*}
   \Big(\sum_{n}  \sup_{|y| < 2^{-k}} \|\oG_{y,j}^{(2m-j)} ( \widetilde\xi_n g_{\ell+2k} )\|_p^p\Big)^{1/p} 
& \lesssim   \Big(\sum_{n} \| \widetilde\xi_n g_{\ell+2k} \|_p^p\Big)^{1/p} \\
&  \lesssim  \|    g_{\ell+2k} \|_p  \\
 & \lesssim   \|  ( 2^{-(\ell +2k)}\Ls)^{2m-j} \e^{-2^{-\ell-2k }\Ls} g \|_p.
 \end{align*}
In other words
$$
\begin{aligned}
 & \Big(\sum_{n}  \sup_{|y| < 2^{-k}} \|\oG_{y,j}^{(2m-j)}
 (\widetilde\xi_n g_{\ell+2k} )\|_p^p\Big)^{1/p} \\ & \qquad \lesssim  \sum_{h=0}^{j} \min (1,  2^{\ell(2m-j+h) /2} )   \|  ( 2^{-(\ell +2k+1)}\Ls)^{2m-j} \e^{-2^{-\ell-2k-1 }\Ls} g \|_p,
\end{aligned}
$$
hence $I  \lesssim \sum_{h=0}^{j} I_{h}$, where
\begin{align*}
I_{h} & \lesssim \!\bigg(\! \sum_{k}\bigg(2^{\alpha k}
\!\sum_{\ell \in \Z}  \min (1,  2^{\ell(2m-j+h) /2} ) \|  ( 2^{-(\ell +2k+1)}\Ls)^{2m-j} \e^{-2^{-\ell-2k-1 }\Ls} g \|_p\bigg)^{q}\bigg)^{\frac{1}{q}}.
\end{align*}
By the triangle inequality in $\ell^q$ we get, as $2m-j>\alpha$,
\begin{align*}
I_{h}  & \lesssim   \sum_\ell \min (1,  2^{\ell(2m-j+h) /2} )  \bigg( \sum_{k}2^{\alpha kq} \|  ( 2^{-(\ell +2k+1)}\Ls)^{2m-j} \e^{-2^{-\ell-2k-1 }\Ls} g\|_p ^{q}\bigg)^{1/q} \\
& =  \sum_\ell  2^{-\ell \alpha/2}  \min (1,  2^{\ell (2m-j+h)/2} )    \\
& \qquad \qquad \times \bigg( \sum_{k}2^{\alpha ( \ell+2k) q/2}
\|  ( 2^{-(\ell +2k+1)}\Ls)^{2m-j} \e^{-2^{-\ell-2k-1 }\Ls} g\|_p ^{q}\bigg)^{1/q} \\
& \lesssim \|g\|_{B^{p,q}_\alpha},
\end{align*}
and the case  $j=0,\dots, m$ is done.

Suppose now that $m < j \leq 2 m$. We have
\begin{align*}
\|\oG_{y,j}^{(2m-j)}(\widetilde\xi_ng)   \oG_{y}^{(j)}(\eta_n f) \|_p 
& \lesssim 
\|\oG_{y,j}^{(2m-j)}(\widetilde\xi_ng)\|_\infty \|\oG_{y}^{(j)}(\eta_n f) \|_p
 \\
 & \lesssim    \| \widetilde\xi_n g  \|_\infty   \|\oG_{y}^{(j)}(\eta_n f) \|_p.
\end{align*}
Therefore, by the triangle inequality in $\ell^{q/p}$,
\begin{align}
\sigma_j &\lesssim \bigg(\sum_{k}\bigg(2^{k\alpha p} \sup_{|y|< 2^{-k}}  
\sum_{n} \| \widetilde\xi_n g  \|_\infty   \|\oG_{y}^{(j)}(\eta_n f) \|_p^p 
\bigg)^{q/p} \bigg)^{1/q} \label{j>m} \nonumber\\
& \lesssim \bigg(\sum_{n} \bigg( \|\widetilde\xi_ng \|_\infty^q  \sum_{k} 2^{k\alpha q}     \sup_{|y|< 2^{-k}} \|\oG_{y}^{(j)}(\eta_n f) \|_p^q \bigg)^{p/q} \bigg)^{1/p} \nonumber 
\\
&\lesssim  \bigg(\sum_{n}   \| \widetilde\xi_n g \|_\infty^p      \|  \eta_n f \|_{B_\alpha^{p,q}}^p \bigg)^{1/p}    \leq \bigg(\sum_{n}   \| \widetilde\xi_n g \|_\infty^p  \bigg)^{1/p}      \| f\|_{B_\alpha^{p,q}, \unif} .
\end{align}
Let now $\varepsilon>0$ be such that $\alpha-\varepsilon>d/p$. Then $B_{\alpha-\varepsilon}^{p,p} \hookrightarrow L^\infty$, hence
\begin{equation}\label{alphamenoeps}
\bigg( \sum_{n} \| \widetilde\xi_n g \|_\infty^p    \bigg)^{1/p} 
\lesssim  \bigg( \sum_{n} \| \widetilde\xi_n  g \|_{B_{\alpha-\varepsilon}^{p,p}}^p    \bigg)^{1/p} 
 \lesssim \| g\|_{B_{\alpha-\varepsilon}^{p,p}}
\end{equation} 
the last bound by Remark~\ref{Remxi}. Since  $B_\alpha^{p,q} \hookrightarrow B_{\alpha-\varepsilon}^{p,p}$, we get
\[
\sigma_j \lesssim  \| g \|_{B_\alpha^{p,q}} \| f \|_{B_\alpha^{p,q}}
\]
also for $m<j\leq 2m$. The proof is complete.
\end{proof}

\subsection{The case $q<p$} We shall prove the analogue of Theorem~\ref{q<p} for all regularities. We begin with the following lemma.
\begin{lemma}\label{embeddingschainstrat}
Suppose $ p,q \in [1, \infty]$ and $\alpha>0$. Then $ B^{p,q}_\alpha\hookrightarrow M_{\alpha}^{{p,q}}\hookrightarrow  B^{p,q}_{\alpha, \unif}$.
\end{lemma}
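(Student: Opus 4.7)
My plan is to mirror the proof of Lemma~\ref{embeddingschain}, replacing the first-order difference $\oD_y$ by the higher-order $\oG^{(m)}_y$ for an integer $m > \alpha$, so as to have access to the equivalent norm
\[
\|h\|_{B^{p,q}_\alpha} \approx \|h\|_p + \Big(\sum_{k\in\N} \big(2^{k\alpha} \sup_{|y|<2^{-k}} \|\oG^{(m)}_y h\|_p\big)^q\Big)^{1/q}
\]
recorded at the beginning of this section. The embedding $M^{p,q}_\alpha \hookrightarrow B^{p,q}_{\alpha,\unif}$ is handled exactly as in Lemma~\ref{embeddingschain}: for each $n \in \N$ I would test the defining supremum against $\gamma_k = \mathbf{1}_{\{k=n\}}$, which has unit $\ell^p$-norm, so that $\sum_k \gamma_k \eta_k f = \eta_n f$ and hence $\|\eta_n f\|_{B^{p,q}_\alpha} \leq \|f\|_{M^{p,q}_\alpha}$ uniformly in $n$.

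For $B^{p,q}_\alpha \hookrightarrow M^{p,q}_\alpha$, I would fix $(\gamma_n) \in \ell^p$ of unit norm, so that $|\gamma_n| \leq 1$ by $\ell^p \hookrightarrow \ell^\infty$. The $L^p$-piece $\|\sum_n \gamma_n \eta_n f\|_p$ is bounded by $\|f\|_p \leq \|f\|_{B^{p,q}_\alpha}$ thanks to the bounded overlap of the supports $\supp \eta_n$, arguing as in~\eqref{intp}. For the finite-difference piece, since $|y| < 1$ implies $\supp \oG^{(m)}_y(\eta_n f) \subseteq x_n B_{m+2}$, the same bounded-overlap argument combined with $|\gamma_n|^p \leq 1$ yields
\[
\Big\|\sum_n \gamma_n \oG^{(m)}_y(\eta_n f)\Big\|_p^p \lesssim \sum_n \|\oG^{(m)}_y(\eta_n f)\|_p^p,
\]
reducing the task to proving
\[
\Big(\sum_k \Big(2^{k\alpha p} \sum_n \sup_{|y|<2^{-k}} \|\oG^{(m)}_y(\eta_n f)\|_p^p\Big)^{q/p}\Big)^{1/q} \lesssim \|f\|_{B^{p,q}_\alpha}.
\]

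This last inequality is precisely the bound $E(f) \lesssim \|f\|_{B^{p,q}_\alpha}$ that drives the proof of Proposition~\ref{proppqr}(2), with $\oG^{(m)}_y$ in place of $\oD_y$, $\eta_n$ in place of $\widetilde\xi_n$, and no outer derivatives $X_I$, $X_J$. My plan is to run that argument verbatim: insert the Calder\'on-type reproducing formula \eqref{CRF} with exponent $m$ applied to $f$, dispose of the finite semigroup-tail terms via the $L^p$-boundedness of the heat semigroup, and handle the main dyadic block using the key pointwise bound of Lemma~\ref{lemmaderstrat2}(3), namely $\|\oG^{(m)}_y(\eta_n \Ls^k e^{-t\Ls} f)\|_p \lesssim t^{-m/2-k}|y|^m \|\mathbf{1}_{\supp \eta_n} e^{-ct\Ls}|f|\|_p$. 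A final triangle inequality in $\ell^q$ against the geometric weights $\min(1, 2^{\ell m/2})$ then yields the desired bound. The only non-routine point to verify is that replacing each partition-of-unity factor $\eta_n$ by $\gamma_n \eta_n$ with $|\gamma_n| \leq 1$ preserves every intermediate inequality in the template proof; this should go through because every such step in Proposition~\ref{proppqr}(2) passed only through the bounded overlap property, the $L^\infty$ bound~\eqref{unifbound}, or Lemma~\ref{lemmaderstrat2}(3), each of which is insensitive to multiplication by a scalar of modulus at most one. This bookkeeping check is the main, though not conceptually hard, obstacle to be addressed in the actual write-up.
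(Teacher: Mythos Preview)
Your proposal is correct and follows the same route as the paper's proof. The paper handles the second embedding exactly as you do, and for the first embedding it reduces to the same inequality and then points to ``arguing as from~\eqref{thison} on'' (the higher-order version carried out in Proposition~\ref{prop_embed3}), which is precisely the adaptation of the Proposition~\ref{proppqr}(2) argument with $\oG^{(m)}_y$ in place of $\oD_y$ that you outline.
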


\begin{proof}
Pick $n\in\N$ and choose  the sequence $\gamma_{k} = \mathbf{1}_{\{n\}}(k)$. Then $\sum_{k}\gamma_{k}\eta_{k}f =  \eta_{n}f$, and the second embedding follows.

To prove the first, observe that since $(\gamma_{n})\in \ell^{\infty}$
\[
\| f\|_{M^{p,q}_\alpha}\leq 
\|f\|_{p}+ \bigg(\sum_{k}\Big(2^{k\alpha p} 
\sup_{|y| < 2^{-k}} \sum_{n}   \|\oG_y^{(m)}(\eta_{n} f)\|_{p}^p \Big)^{q/p} \bigg)^{1/q}
\]
so that arguing as from~\eqref{thison} on, one gets $\| f\|_{M^{p,q}_\alpha} \lesssim \|f\|_{B^{p,q}_\alpha}$, and the first embedding follows.
\end{proof}

\begin{theorem}\label{q<pstratified}
Suppose $1\leq q < p < \infty$ and $\alpha>d/p$. Then  $M B_{\alpha}^{p,q} = M_{\alpha}^{p,q}$ with equivalence of norms.
\end{theorem}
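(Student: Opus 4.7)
The plan is to adapt the proof of Theorem~\ref{q<p} to the whole range $\alpha > d/p$ by replacing the first-order difference $\oD_y$ with the higher-order difference $\oG_y^{(2m)}$ for any fixed integer $m > \alpha$, and by using the equivalent Besov norm in terms of $\oG_y^{(2m)}$ recalled before Proposition~\ref{prop_embed3}. All the needed ingredients are now available: the Leibniz rule~\eqref{leibnizG}, the pointwise estimates of Lemma~\ref{lemmaderstrat2}, the embedding $M^{p,q}_\alpha \hookrightarrow B^{p,q}_{\alpha,\unif}$ of Lemma~\ref{embeddingschainstrat}, and the decomposition into disjoint families furnished by Lemma~\ref{covering}~(4). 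For the nontrivial inclusion $M^{p,q}_\alpha \hookrightarrow MB^{p,q}_\alpha$, I would fix $f \in M^{p,q}_\alpha$, $g \in B^{p,q}_\alpha$ and $\xi \in \sC$ with $\xi = 1$ on $\supp \eta$, so that $\widetilde\xi_n\eta_n=\eta_n$, and expand
\begin{align*}
\oG_y^{(2m)}(fg) &= \sum_n \oG_y^{(2m)}(\eta_n f\,\widetilde\xi_n g)\\
&= \sum_{j=0}^{2m}\binom{2m}{j}\sum_n \oG_{y,j}^{(2m-j)}(\widetilde\xi_n g)\,\oG_y^{(j)}(\eta_n f)
\end{align*}
via~\eqref{leibnizG}, obtaining $2m+1$ terms $\sigma_j$. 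For $0\leq j \leq m$ the derivative orders lie mainly on $g$, and one can reproduce essentially verbatim the Schur-type analysis from the $j\leq m$ case in Proposition~\ref{prop_embed3}, based on~\eqref{CRF},~\eqref{theta-0}, and Lemma~\ref{lemmaderstrat2}, obtaining $\sigma_j \lesssim \|g\|_{B^{p,q}_\alpha}\|f\|_{B^{p,q}_{\alpha,\unif}} \lesssim \|g\|_{B^{p,q}_\alpha}\|f\|_{M^{p,q}_\alpha}$, the latter by Lemma~\ref{embeddingschainstrat}.

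For $m < j \leq 2m$, Lemma~\ref{lemmaderstrat2}~(1) gives
\[
\sigma_j^q \lesssim \sum_k \Big(2^{k\alpha p}\sup_{|y|<2^{-k}} \sum_n \|\widetilde\xi_n g\|_\infty^p\,\|\oG_y^{(j)}(\eta_n f)\|_p^p\Big)^{q/p}.
\]
At this point I would apply Lemma~\ref{covering}~(4) with separation $\geq 2(2m+2)$ to decompose $\N = \bigcup_{k=1}^{N} I_k$ so that the sets $x_n B_{j+2}$, $n \in I_k$, are pairwise disjoint for every $j \leq 2m$, hence so are the supports of $\oG_y^{(j)}(\eta_n f)$ when $|y|\leq 1$. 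The choice $\gamma_n = c\|\eta_n g\|_\infty / \|g\|_{B^{p,q}_\alpha}$ on $I_k$ (and $\gamma_n = 0$ otherwise), for $c>0$ small enough, yields an $\ell^p$ sequence of norm $\lesssim 1$ via the embedding used in~\eqref{alphamenoeps}, and the disjointness converts the sum over $n \in I_k$ into
\[
\Big\|\oG_y^{(j)}\Big(\sum_{n \in I_k}\gamma_n \eta_n f\Big)\Big\|_p^p,
\]
whence $\sigma_j \lesssim \|g\|_{B^{p,q}_\alpha}\|f\|_{M^{p,q}_\alpha}$. The reverse inclusion $MB^{p,q}_\alpha \hookrightarrow M^{p,q}_\alpha$ is obtained by the same disjoint decomposition combined with the algebra property of $B^{p,q}_\alpha$: for $(\gamma_n)\in \ell^p$ of unit norm, one has $\|\sum_{n\in I_k}\gamma_n\eta_n f\|_{B^{p,q}_\alpha} \lesssim \|f\|_{MB^{p,q}_\alpha}\|\sum_{n\in I_k}\gamma_n\eta_n\|_{B^{p,q}_\alpha}$, and the last factor is controlled by $\|(\gamma_n)\|_{\ell^p}\|\eta\|_{B^{p,q}_\alpha} \lesssim 1$ through the higher-order difference norm and~\eqref{unifbound}, exactly as in~\eqref{MBD}--\eqref{normafuori}.

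The main obstacle is the technical bookkeeping needed for the $j \leq m$ terms: the Leibniz rule~\eqref{leibnizG} produces translated differences $\oG_{y,j}^{(2m-j)}$ rather than pure differences, which must first be rewritten as combinations of the latter via~\eqref{theta-0}, after which one iterates the reproducing formula~\eqref{CRF} with $2m-j > \alpha$ derivatives so that the Schur-type integrals in the variables $s,t\in (0,1)$ converge. This is precisely the delicate step of Proposition~\ref{prop_embed3}; once it is in place, the disjointness argument for $j > m$ and the reverse direction follow the template of Theorem~\ref{q<p} nearly verbatim.
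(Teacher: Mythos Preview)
Your proposal is correct and follows essentially the same route as the paper's own proof: the Leibniz expansion via~\eqref{leibnizG} into the terms $\sigma_j$, $0\le j\le 2m$, the reuse of the $j\le m$ analysis from Proposition~\ref{prop_embed3} together with Lemma~\ref{embeddingschainstrat}, the disjoint-support trick from Lemma~\ref{covering}~(4) with the specific choice of $(\gamma_n)$ for $m<j\le 2m$, and the reverse inclusion via the algebra property and~\eqref{normafuori}--\eqref{MB} are exactly what the paper does.
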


\begin{proof}
Pick $m\in\N$ with $m>\alpha$. We shall prove that given $f\in M^{p,q}_\alpha$ and $g\in B^{p,q}_\alpha$
\begin{equation}\label{unodidue}
 \|fg  \|_p + \bigg( \sum_{k}   (2^{k\alpha}   \sup_{|y|<2^{-k}} \|\oG_{y}^{(m)}(fg)\|_{p}^q\bigg)^{1/q} \lesssim  \| f\|_{M^{p,q}_\alpha} \| g\|_{B^{p,q}_\alpha}.
\end{equation}
This implies that $\| f\|_{M^{p,q}_\alpha} \gtrsim \|f\|_{MB^{p,q}_\alpha}$.

By~\eqref{fgpstrat}
\[
\| f g\|_{p } \lesssim  \|g\|_{B_\alpha^{p,q}}  \|  f \|_{B_\alpha^{p,q}, {\unif}}\lesssim   \|g\|_{B_\alpha^{p,q}}  \|  f \|_{M^{p,q}_\alpha} ,
\]
the last inequality by Lemma~\ref{embeddingschain}, as well as  (we maintain the same notation as that of Proposition~\ref{prop_embed3})
\[
 \bigg( \sum_{k}   (2^{k\alpha}   \sup_{|y|<2^{-k}} \|\oG_{y}^{(m)}(fg)\|_{p}^q\bigg)^{1/q}  \lesssim \sum_{j=0}^{2m} \sigma_{j}.
\]
Since to estimate the terms with $j=0,\dots, m$ we did not use any condition on $p$ and $q$, we might argue in the same manner and get
\[
 \sum_{j=0}^{m} \sigma_{j} \lesssim  \|g\|_{B_\alpha^{p,q}}  \|  f \|_{B_\alpha^{p,q }, \unif} \lesssim \|g\|_{B_\alpha^{p,q}}  \|  f \|_{M^{p,q}_\alpha}
\]
again by Lemma~\ref{embeddingschain}.

We are left with considering the case $ j=m+1, \dots, 2m$. Select, by Lemma~\ref{covering}, $N$ disjoint families of indices $I_{k}$, $k=1,\dots, N$ with the property that
\[
\N =\bigcup_{k=1}^{N} I_{k}, \qquad  d_C(x_{\ell},x_{h}) \geq 2m+4 \quad \forall \ell,h \in I_{k}, \, \ell\neq k, \; \forall \, k.
\] 
Then, for $|y|\leq 1$ and $\ell,h \in I_{k}$, $\ell\neq h$,
\[
\supp \oG_{y}^{(m)}(\eta_{\ell}f)\cap  \supp \oG_{y}^{(m)}(\eta_{h}f) \subseteq  x_{\ell}B_{m+2} \cap x_{h}B_{m+2}  = \emptyset
\]
and thus
\begin{equation*}
\begin{split}
\sum_{n \in I_{k}}|\gamma_{n}|^{p}\|\oG_{y}^{(m)}(\eta_{n}f)\|_{p}^{p} 
&= \sum_{n \in I_{k}} \int_{G}|\gamma_{n}|^{p} |\oG_{y}^{(m)}(\eta_{n}f)|^{p} \, \dd\lambda \\
& = \int_{G}\Big| \sum_{n \in I_{k}} \oG_{y}^{(m)}(\gamma_{n}\eta_{n}f)\Big|^{p}\, \dd \lambda = \Big\| \sum_{n \in I_{k}} \oG_{y}^{(m)}(\gamma_{n}\eta_{n}f)\Big\|_{p}^{p}.
\end{split}
\end{equation*}
Now fix $k=1,\dots, N$ and pick the sequence (we assume  $g\neq 0$ here)
\[
\gamma_n = \gamma \frac{ \|\eta_{n} g\|_{\infty} }{ \| g\|_{B^{p,q}_{\alpha}}} \quad \mbox{if } n\in I_{k}, \qquad \gamma_{n}=0 \quad \mbox{otherwise}.
\]
If $\gamma$ is small enough, then the sequence $(\gamma_n)$ is in $\ell^{p}$ and has $\ell^{p}$ norm smaller than $1$; recall~\eqref{alphamenoeps}. This also shows that $\gamma$ can be chosen independent of $g$ and $k$.

Therefore, by~\eqref{j>m}
\begin{align*}
\sigma_j &\lesssim \bigg(\sum_{k}\bigg(2^{k\alpha p} \sup_{|y|< 2^{-k}}  
\sum_{n} \| \widetilde\xi_n g  \|_\infty   \|\oG_{y}^{(j)}(\eta_n f) \|_p^p 
\bigg)^{q/p} \bigg)^{1/q} \\
& \lesssim  \|g\|_{B^{p,q}_\alpha} \bigg(\sum_{k}\bigg(2^{k\alpha p} \sup_{|y| < 2^{-k}}  
\Big\| \sum_{n}   \oG^{(j)}_y(\gamma_{n}\eta_{n}f)\Big\|_{p}^p \bigg)^{q/p} \bigg)^{1/q}   \lesssim \|g\|_{B^{p,q}_\alpha}  \|f\|_{M^{p,q}_{\alpha}},
\end{align*}
which concludes the proof of~\eqref{unodidue}

Suppose now that $f \in  M B^{p,q}_\alpha$  and $(\gamma_n)\in \ell^{p}$. Observe that~\eqref{normafuori} still holds. By the algebra property of $B^{p,q}_{\alpha}$,
\begin{equation}\label{MB}
\Big\| \sum_{n\in I_{k}}\gamma_{n}\eta_{n}f\Big\|_{B^{p,q}_{\alpha}} \lesssim \|f\|_{MB^{p,q}_{\alpha}} \Big\| \sum_{n\in I_{k}}\gamma_{n}\eta_{n}\Big\|_{B^{p,q}_{\alpha}},
\end{equation}
where
\begin{align*}
\Big\| \sum_{n \in I_{k}} & \gamma_{n}\eta_{n}\Big\|_{B^{p,q}_{\alpha}} 
 \lesssim  \Big\|   \sum_{n \in I_{k}} \gamma_{n}\eta_{n}\Big\|_{p} + \bigg(\sum_{k} 2^{k\alpha q} \sup_{|y| < 2^{-k}}  \bigg(\sum_{n}
\|\oG_{y}^{(m)}( \gamma_{n}\eta_n)\|_{p}^p \bigg)^{q/p} \bigg)^{1/q} \\
&\lesssim \Big(   \sum_{n}\| \gamma_{n}\eta_{n}\|_{p}^p\Big)^{1/p} + \bigg(\sum_{k} 2^{k\alpha q} \sup_{|y| < 2^{-k}}  \bigg(\sum_{n}|\gamma_{n}|^{p} \|\oG_{y}^{(m)}\eta \|_{p}^p \bigg)^{q/p} \bigg)^{1/q} \\
& \lesssim  \|(\gamma_{n})\|_{\ell^{p}} \| \eta\|_{B^{p,q}_\alpha} \lesssim 1.
\end{align*}
By~\eqref{normafuori} and~\eqref{MB} we conclude $\|f\|_{M^{p,q}_{\alpha}} \lesssim  \|f\|_{MB^{p,q}_{\alpha}}$, and this completes the proof.
\end{proof}

\begin{remark}\label{remarkHOD} 
As already observed, the main obstacle to proving Theorem~\ref{teoB} for all $\alpha>0$, or equivalently Theorem~\ref{teoBstratified} beyond the stratified groups case, was for us the lack of a suitable notion of finite differences of order larger than $1$. It seems not clear, indeed, what the analogue of~\eqref{Gythetam} on a general Lie group should be.

The case of second order differences is somewhat special, and was actually considered by several authors (cf., e.g.,~\cite{Folland,Saka}) in different generalities. One may indeed define a \emph{symmetric} second-order difference of the form
\[
\oS^{(2)}_{y} f(x) = f(xy^{-1}) - 2f(x) + f(xy), \qquad x,y\in G.
\]
Since $\oS^{(2)}_{y} f  = \oD^{(2)}_{y} f( \cdot \, y^{-1})$, where
\[
\oD^{(2)}_{y} f(x) = f(xy^{-2}) - 2f(xy^{-1}) + f(x),
\]
it is tempting to define, for $m\in\mathbb N$ and $y\in G$, the finite difference of order $m$ as
\begin{equation}\label{Dm}
\oD_y^{(m)} f(x) = \sum_{\ell =0}^{m} (-1)^{m -\ell}   \binom{m}{\ell}   f(x y^{-\ell} ) .
 \end{equation}
When $G$ is a Euclidean space, this definition is nothing but the classical one, and when $m=1$ it is precisely~\eqref{oD}, i.e.\ $\oD^{(1)}_{y} = \oD_{y}$. However, if $G$ is stratified and $m\geq 2$, then~\eqref{Dm} does not coincide with~\eqref{Gythetam}, not even when $\theta=0$. Observe indeed that in general $x^{m} \neq \delta_{m}(x)$. Nevertheless, this differences do have remarkable properties, as a ``Leibniz rule'' in the spirit of~\eqref{leibnizG}, that is for all $m\in \N$
\[
\oD_y^{(m)}(f  g)(x) = \sum_{j=0}^{m} \binom{m}{j}   \oD_y^{(m-j)}f(xy^{-j})  \oD_y^{(j)}g(x),\qquad x,y\in G  .
\]
It is not clear to us whether $\oS_{y}^{(2)}$ and more generally $\oD_y^{(m)}$ satisfy the analog of Lemma~\ref{lemmaderstrat2}~(2), or the characterization~\eqref{Besovequivnorm} of the Besov norm, which are essential ingredients in our argument. This seems an interesting direction for future research in its own right.
\end{remark}

\end{document}